\documentclass[12pt,a4paper, oneside,reqno]{article}
\usepackage[utf8]{inputenc}
\usepackage[english]{babel}
\usepackage[left=1in,right=1in,top=1in,bottom=1in]{geometry}
\usepackage{amsfonts,amssymb,amsmath,amsthm}

\usepackage[symbol]{footmisc}
\usepackage{longtable}
\usepackage{tasks}
\usepackage{enumerate}
\usepackage{cite}
\usepackage{xcolor}
\usepackage{hyperref}
\usepackage{verbatim}
\usepackage{tkz-graph}
\usepackage{fancyhdr}
\theoremstyle{plain}
\newtheorem{theorem}{Theorem}
\newtheorem{lemma}[theorem]{Lemma}
\newtheorem{proposition}[theorem]{Proposition}
\newtheorem{remark}[theorem]{Remark}

\newtheorem{definition}[theorem]{Definition}
\newtheorem{corollary}[theorem]{Corollary}
\usetikzlibrary{arrows}

 \usepackage{fouriernc}

\newcommand{\Der}{\mathfrak{Der}}
\newcommand{\QDer}{{\rm Q}\mathfrak{Der}}

\newcommand{\Inn}{\operatorname{Inn}}
\newcommand{\ad}{\operatorname{ad}}

\newcommand{\C}{\mathbb{C}}
\newcommand{\Z}{\mathbb{Z}}
\newcommand{\SV}{\mathfrak{sv}}
\newcommand{\ESV}{\mathfrak{esv}}
\newcommand{\Id}{\operatorname{Id}}
\newcommand{\Hom}{\operatorname{Hom}}
\newcommand{\Span}[1]{\langle #1\rangle}
\begin{document}

\noindent{\large
Quasi-derivations of the (extended) Schr\"odinger--Witt algebras}\footnote{The work is supported by FCT 2023.08031.CEECIND and UID/00212/2025.} \footnote{The original concept, theoretical framework, and primary results were developed by I.K., A.K., and Z.S.;
while Aristotle  (https://aristotle.harmonic.fun/)  acted as a non-human research assistant, assisting in checking and simplifying proofs.}

\begin{center}
	{\bf
		Ivan Kaygorodov\footnote{CMA-UBI, University of Beira Interior, Portugal; \ kaygorodov.ivan@gmail.com},
   Abror Khudoyberdiyev\footnote{V.I.Romanovskiy Institute of Mathematics, Uzbekistan Academy of Sciences; National University of Uzbekistan, Uzbekistan; \
khabror@mail.ru} \&
Zarina Shermatova\footnote{V.I.Romanovskiy Institute of Mathematics, Uzbekistan Academy of Sciences; Kimyo International University in Tashkent, Uzbekistan; \
zarina\_shermatova91@mail.ru}}
\end{center}


\noindent 
{\bf Abstract:}
{\it The Schr\"odinger--Witt algebras are infinite-dimensional Lie algebras of local
dynamical symmetries of the free Schr\"odinger equation; they contain the Witt algebra of
conformal transformations of the time line and appear in non-equilibrium statistical
physics and in non-relativistic conformal field theory.
We describe all quasi-derivations of the original and the twisted
Schr\"odinger--Witt\footnote{We use the name \textquotedblleft Witt\textquotedblright\ for the simple Witt algebra and the name \textquotedblleft Virasoro\textquotedblright\ for its universal central extension; in the physical literature the algebras studied here are usually called Schr\"odinger--Virasoro algebras.} algebras. We prove that every quasi-derivation is the sum of
a derivation, a trivial $\frac12$-derivation, a linear map with values in the center, and a
finite linear combination of maps from three new families of quasi-derivations.
As corollaries, we describe all $\delta$-derivations, all $\delta$-biderivations and all transposed $\delta$-Poisson structures of the
Schr\"odinger--Witt algebras. Finally, we prove that every quasi-derivation of the (original or twisted) extended
Schr\"odinger--Witt algebra is the sum of an inner derivation and a multiple of the
identity map. As a consequence, there are no
nontrivial $\delta$-derivations (for $\delta\notin \{\frac12,1\}$), $\delta$-biderivations (for $\delta\neq1$), and
transposed
$\delta$-Poisson structures on these algebras.}

\medskip 

\noindent\textbf{Keywords:} Lie algebra,
Schr\"odinger--Witt algebra,   quasi-derivation, $\delta$-derivation,
 transposed $\delta$-Poisson algebra.

\medskip
\noindent {\bf MSC2020}: 17B68 (primary); 17A30, 17B63,  17B81, 81R10 (secondary).

\tableofcontents

\newpage

\section*{Introduction}

\textbf{Schr\"odinger symmetry in physics.}
The maximal group of kinematic symmetries of the free Schr\"odinger equation
$2i\partial_t\psi=-\Delta\psi$ was found by Niederer \cite{Nie72} and Hagen \cite{Hag72}. Its
Lie algebra, the Schr\"odinger algebra, contains the Galilei algebra, the dilatations
$t\mapsto\lambda^2t$, $x\mapsto\lambda x$ (with dynamical exponent $z=2$) and the special
Schr\"odinger transformations $t\mapsto t/(1+\kappa t)$. In order to describe strongly
anisotropic critical systems, Henkel \cite{H94} embedded the Schr\"odinger algebra (in one
space dimension) into an infinite-dimensional Lie algebra, the Schr\"odinger--Virasoro
algebra, which is the main object of the present paper (we call it the Schr\"odinger--Witt
algebra, see below). In this algebra, the generators $L_m$ realize the time
reparametrizations $t\mapsto f(t)$, i.e., a copy of the Witt algebra, while $Y_n$ generate
time-dependent space translations (including the space translations and the Galilei boosts)
and $M_m$ generate time-dependent phase shifts of the wave function ($M_0$ corresponds to the
mass, which is central). The local scale invariance generated by this algebra has been
applied to dynamical scaling and ageing phenomena in systems far from equilibrium
\cite{Hen02,HU03,HP10}. Non-relativistic conformal field theories with Schr\"odinger symmetry
also describe cold atoms at unitarity \cite{NS07}, and their holographic duals \cite{Son08,BM08}
have made Schr\"odinger symmetry a part of the non-relativistic AdS/CFT correspondence.
A systematic mathematical study of the Schr\"odinger--Virasoro group and algebra, including
their representation theory and cohomology, was carried out by Roger and Unterberger
\cite{RU06}; see also the monograph \cite{UR12}. Studying vertex algebra representations of
the Schr\"odinger--Virasoro algebra, Unterberger \cite{Unt09} introduced the extended
Schr\"odinger--Virasoro algebra, which contains one more family of generators $N_m$
(denoted by $\ESV(\frac12)$ in this paper).

\medskip
\noindent 
\textbf{Algebras with the same multiplication table.}
Lie algebras with the multiplication table \eqref{eq:sv} (or \eqref{eq:esv}) have been
studied from many algebraic points of view: representations \cite{LS08,ZTL10,UR12},
automorphisms and Verma modules \cite{TZ09}, derivations and cohomology \cite{RU06},
structure, derivations and automorphisms of the extended algebra \cite{GJP09}, biderivations
and commutative post-Lie algebra structures \cite{LT19}, $\frac12$-derivations and transposed
Poisson structures \cite{YH22,Sh24}. Since these algebras are centerless and the Virasoro
algebra is the central extension of the Witt algebra, we follow \cite{ZTL10} and use the
name Schr\"odinger--Witt algebras; the name Schr\"odinger--Virasoro is used for the same
algebras in \cite{H94,RU06,UR12,Unt09,LS08,TZ09,GJP09,LT19,YH22,Sh24}.

\medskip
\noindent 
\textbf{Generalized derivations.} Derivations and their generalizations are among the most important tools for studying algebraic structures.
Among the many generalizations of derivations we mention $\delta$-derivations, introduced by Filippov \cite{Fil98} and studied for many classes of
algebras and superalgebras \cite{Kay07,Kay09,zz,Zus10}, and generalized derivations, quasi-derivations and the quasicentroid, introduced and systematically studied by Leger and Luks \cite{LL00}.
Recall that a linear map $D$ of a Lie algebra $L$ is a
\emph{quasi-derivation} if there is a linear map $D'$ of $L$ such that
\[
[D(x),y]+[x,D(y)]=D'([x,y])\quad \text{for all } x,y\in L.
\]
Every derivation is a quasi-derivation (with $D'=D$) and every $\delta$-derivation with
$\delta\ne0$ is a quasi-derivation (with $D'=\delta^{-1}D$). The $\frac 12$-derivations have
attracted a lot of attention in recent years, since Ferreira, Kaygorodov and Lopatkin established a relation between $\frac12$-derivations of Lie algebras and transposed Poisson algebras \cite{FKL21}. The notion of transposed Poisson algebras was introduced by Bai, Bai, Guo and Wu \cite{BBGW20} as a dual notion of Poisson algebras; transposed Poisson algebras are closely related to Novikov--Poisson algebras and $3$-Lie algebras \cite{BBGW20}, and Novikov algebras originate from Poisson brackets of hydrodynamic type \cite{BN85}. Their seminal paper attracted considerable interest, and since then transposed Poisson algebras have been studied in various directions (see \cite{kms,deltu,delta,K24,aae23,LS26,KLZ23} and references therein).
In particular,
transposed Poisson structures have been described on many important Lie algebras:
the Witt and Virasoro algebras \cite{FKL21}; Galilean algebras \cite{KLZ23}; twisted Heisenberg--Virasoro, Schr\"odinger--Witt, and extended Schr\"odinger--Witt algebras \cite{YH22}; the original deformative Schr\"odinger--Witt algebras and the central extension of the extended Schr\"odinger--Witt algebras \cite{Sh24}; and not-finitely graded Witt-type algebras \cite{KKS25}.
Biderivations and the closely related commuting maps of Lie algebras were studied  in \cite{HWX16,LT19}, and cohomological methods for Virasoro-type structures
were developed, for example, in \cite{RU06,akl}.
Some of the cited results were generalized to
quasi-derivations and to the description of transposed $\delta$-Poisson structures:
for the Witt algebra and the $\mathcal{W}(a,b)$ algebras    \cite{kkz}, and 
for the Block Lie algebras $\mathcal{B}(q)$ \cite{kkz2}.
The present paper extends the known descriptions of $\frac 12$-derivations and transposed Poisson structures of the (extended) Schr\"odinger--Witt algebras to descriptions of their quasi-derivations,   $\delta$-(bi)derivations and transposed $\delta$-Poisson structures.

 \medskip
\noindent 
\textbf{Main results.} The derivations of both the original and twisted Schr\"odinger--Witt algebras were determined in \cite{RU06}. Further investigations in \cite{LT19} characterized their biderivations and commutative post-Lie algebra structures. Moreover, Yuan and Hua \cite{YH22} established that every $\frac12$-derivation of these algebras is trivial, namely, a scalar multiple of the identity map.
In Subsection \ref{section4} we describe all quasi-derivations of the Schr\"odinger--Witt
algebras $\SV(\varepsilon)$ (Theorem \ref{thm:main}): every quasi-derivation is the sum of
a derivation, a multiple of the identity map, a linear map with values in the center and a
finite linear combination of maps from three new families $\Omega_t$, $\Psi_t$ $(t\neq0)$ and
$\Theta_s$ of quasi-derivations, which are neither derivations nor
$\delta$-derivations. As corollaries, we describe all   $\delta$-(bi)derivations and all transposed $\delta$-Poisson structures on
$\SV(\varepsilon)$ for every $\delta\in\C$ (Corollaries \ref{cor:delta}, \ref{cor:bider} and
\ref{cor:tp}); in particular, we obtain new proofs of the descriptions of derivations
\cite{RU06} and $\frac12$-derivations \cite{YH22} of $\SV(\varepsilon)$.
In the last section we consider the extended Schr\"odinger--Witt algebras
$\ESV(\varepsilon)$. Here the
situation is much more rigid: every quasi-derivation of $\ESV(\varepsilon)$ is of the form
$\ad_x+\lambda\Id$ (Theorem \ref{thm:esvmain}). Consequently, all derivations of
$\ESV(\varepsilon)$ are inner (Corollary \ref{cor:esvder}; for the original extended
Schr\"odinger--Witt algebra $\ESV(\frac12)$ this was proved earlier by Gao, Jiang and
Pei \cite{GJP09}), its $\delta$-derivations are described for all $\delta\in\C$
(Corollary \ref{cor:esvdelta}), every biderivation of $\ESV(\varepsilon)$ is a multiple of the
Lie bracket and every $\delta$-biderivation with $\delta\neq1$ is zero (Corollary
\ref{thm:esvbider}), and all transposed $\delta$-Poisson structures on $\ESV(\varepsilon)$ are
trivial for every $\delta\in\C$ (Corollary \ref{cor:esvtp}). In particular,  all transposed Poisson structures on
$\ESV(\varepsilon)$ are trivial; for the original extended Schr\"odinger--Witt algebra
$\ESV(\frac12)$ these results were obtained earlier in \cite{YH22}, and our approach gives
a new proof of them.

\medskip
\noindent{\bf Perspectives.}
From the point of view of mathematical physics, these results describe the
\textquotedblleft generalized infinitesimal symmetries\textquotedblright\ of the
Schr\"odinger--Witt symmetry algebras and show their rigidity: neither $\SV(\varepsilon)$ nor
$\ESV(\varepsilon)$ admits a nontrivial compatible commutative associative product of
transposed Poisson type (for $\SV(\varepsilon)$ and $\delta=1$ the only such products are
supported on the abelian ideal $\Span{M_m\mid m\in\Z}$ of phase shifts), and the extended algebra admits
no outer derivations at all, so that its symmetry structure cannot be deformed or enlarged
within these classes of generalized derivations. In contrast, the quasi-derivations
$\Omega_t,\Psi_t,\Theta_s$ show that $\SV(\varepsilon)$ carries generalized symmetries that
do not come from derivations of its Witt subalgebra.

\section{Preliminaries}\label{section1}

All algebras and vector spaces are considered over the complex field $\C$. The notation
$\Span{S}$ means the $\C$-subspace generated by $S$. For a Lie algebra $L$ and $x\in L$ we
denote by $\ad_x$ the inner derivation $\ad_x(y)=[x,y]$. The center of $L$ is denoted by
$Z(L)$.

\begin{definition}[see \cite{LL00}]\label{def:qd}
Let $(L,[-,-])$ be a Lie algebra and $D:L\to L$ be a linear map. Then $D$ is a
\emph{quasi-derivation} if there is a linear map $D':L\to L$ such that
\begin{equation}\label{eq:qd}
[D(x),y]+[x,D(y)]=D'([x,y]) \quad\text{for all } x,y\in L.
\end{equation}
The space of all quasi-derivations of $L$ is denoted by $\QDer(L)$. If $D$ and $D'$
satisfy \eqref{eq:qd}, we say that $D'$ is \emph{associated} with $D$.
\end{definition}

\begin{definition}[see \cite{Fil98}]\label{def:dd}
Let $(L,[-,-])$ be a Lie algebra, $\delta\in\C$ and $\varphi:L\to L$ be a linear map. Then
$\varphi$ is a \emph{$\delta$-derivation} if
\begin{equation}\label{eq:dd}
\varphi([x,y])=\delta\big([\varphi(x),y]+[x,\varphi(y)]\big)\quad\text{for all } x,y\in L.
\end{equation}
The space of $\delta$-derivations of $L$ is denoted by $\Delta_\delta(L)$. In particular,
$\Delta_1(L)=\Der \ L$.
\end{definition}

 If $\delta = \frac{1}{2}$, then $\varphi$ is called a \emph{$\frac{1}{2}$-derivation}. Multiplications by elements of the ground field are called \emph{trivial} $\frac{1}{2}$-derivations.

\begin{definition}[see \cite{YH22}]\label{def:bider}
Let $(L,[-,-])$ be a Lie algebra and $\delta\in\C$. A bilinear map $\phi:L\times L\to L$ is a
\emph{$\delta$-biderivation} if
\begin{align}
\phi([x,y],z)&=\delta\big([\phi(x,z),y]+[x,\phi(y,z)]\big), \label{eq:bd1}\\
\phi(x,[y,z])&=\delta\big([\phi(x,y),z]+[y,\phi(x,z)]\big) \label{eq:bd2}
\end{align}
for all $x,y,z\in L$. The $1$-biderivations are called \emph{biderivations}.
\end{definition}

In other words, $\phi$ is a $\delta$-biderivation if and only if for every $z\in L$ the maps
$x\mapsto\phi(x,z)$ and $x\mapsto\phi(z,x)$ are $\delta$-derivations. For every
$\lambda\in\C$ the bilinear map $\phi(x,y)=\lambda[x,y]$ is a biderivation, by the Jacobi
identity. Biderivations and the closely related commuting maps have been studied
extensively, for associative rings as well as for Lie algebras (see, for example,
\cite{HWX16}).

\begin{definition}\label{def:tp}
Let $(L,[-,-])$ be a Lie algebra and $\delta\in\C$. A \emph{transposed $\delta$-Poisson
structure} on $L$ is a commutative associative multiplication $\cdot$ on $L$ such that
\begin{equation}\label{eq:tdp}
z\cdot[x,y]=\delta\big([z\cdot x,y]+[x,z\cdot y]\big)\quad\text{for all } x,y,z\in L.
\end{equation}
The transposed $\frac12$-Poisson structures, i.e., the commutative associative
multiplications $\cdot$ on $L$ such that $2z\cdot[x,y]=[z\cdot x,y]+[x,z\cdot y]$ for all
$x,y,z\in L$, are called \emph{transposed Poisson structures} \cite{BBGW20,FKL21}.
A transposed $\delta$-Poisson structure is \emph{trivial} if $x\cdot y=0$ for all $x,y\in L$.
\end{definition}

In other words, \eqref{eq:tdp} means that all multiplication operators $x\mapsto z\cdot x$ are
$\delta$-derivations of the Lie algebra $(L,[-,-])$.

\begin{lemma}[see \cite{YH22}]\label{lem:tpbider}
Let $\cdot$ be a transposed $\delta$-Poisson structure on a Lie algebra $L$. Then the bilinear
map $\phi(x,y)=x\cdot y$ is a $\delta$-biderivation of $L$. In particular, if $\cdot$ is a
transposed Poisson structure on $L$, then $\phi$ is a $\frac12$-biderivation of $L$.
\end{lemma}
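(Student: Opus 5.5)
We have to check the two identities \eqref{eq:bd1} and \eqref{eq:bd2} for $\phi(x,y)=x\cdot y$, using only that $\cdot$ is commutative and satisfies \eqref{eq:tdp}. Associativity is not needed.

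\textbf{Identity \eqref{eq:bd2}.} We read it as a statement about the left multiplication operator $L_x\colon y\mapsto x\cdot y$. Applying \eqref{eq:tdp} with $z$ replaced by $x$ and the pair $(x,y)$ replaced by $(y,z)$ gives
\[
x\cdot[y,z]=\delta\big([x\cdot y,z]+[y,x\cdot z]\big),
\]
which is exactly \eqref{eq:bd2}. Equivalently, every $L_x$ is a $\delta$-derivation, as noted after Definition~\ref{def:tp}.

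\textbf{Identity \eqref{eq:bd1}.} We use commutativity to move the fixed argument to the left. We have $\phi([x,y],z)=[x,y]\cdot z=z\cdot[x,y]$. By \eqref{eq:tdp} this equals $\delta([z\cdot x,y]+[x,z\cdot y])$. Commutativity again rewrites $z\cdot x=x\cdot z=\phi(x,z)$ and $z\cdot y=\phi(y,z)$, so
\[
\phi([x,y],z)=\delta\big([\phi(x,z),y]+[x,\phi(y,z)]\big),
\]
which is \eqref{eq:bd1}.

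Thus $\phi$ is a $\delta$-biderivation. The special case $\delta=\frac12$ is the statement for transposed Poisson structures. There is no real obstacle here; the only point needing care is matching the variable roles in \eqref{eq:bd1} and \eqref{eq:bd2} correctly when invoking commutativity.
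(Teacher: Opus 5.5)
Your proof is correct. \eqref{eq:bd2} is just \eqref{eq:tdp} after renaming variables, and \eqref{eq:bd1} follows from \eqref{eq:tdp} by two uses of commutativity; associativity is indeed never needed. The paper gives no proof of its own (it cites \cite{YH22}), but its remarks after Definitions~\ref{def:bider} and~\ref{def:tp} encode exactly your argument: multiplication operators are $\delta$-derivations, and commutativity identifies the two partial maps $x\mapsto\phi(x,z)$ and $x\mapsto\phi(z,x)$.
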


\begin{lemma}\label{lem:unique}
Let $L$ be a perfect Lie algebra, i.e., $[L,L]=L$. Then for every $D\in\QDer(L)$ the map $D'$
associated with $D$ is unique. In particular, the map $D\mapsto D'$ is linear, $D'=0$
whenever $D$ is a central map (i.e., a linear map with values in $Z(L)$), and $D\in\QDer(L)$ is a derivation if and only if $D'=D$.
\end{lemma}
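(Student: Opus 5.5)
The plan is to prove uniqueness of $D'$ first, and then read off the three consequences from it.

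\emph{Uniqueness.} Suppose $D'_1$ and $D'_2$ are both associated with $D$. Subtracting the two instances of \eqref{eq:qd} gives
\[
(D'_1-D'_2)([x,y])=0 \quad\text{for all } x,y\in L .
\]
Since $L$ is perfect, every element $z\in L$ is a finite sum $z=\sum_i [x_i,y_i]$. Applying the linear map $D'_1-D'_2$ to this sum shows that $D'_1-D'_2$ vanishes on all of $L$, so $D'_1=D'_2$. Perfectness is used exactly here: it ensures that brackets span $L$, so $D'$ is determined on the whole algebra and not only on $[L,L]$.

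\emph{Linearity.} Take $D_1,D_2\in\QDer(L)$ and scalars $\alpha,\beta$. Add $\alpha$ times the identity for $(D_1,D_1')$ to $\beta$ times the identity for $(D_2,D_2')$. The result shows that $\alpha D_1'+\beta D_2'$ is associated with $\alpha D_1+\beta D_2$. In particular, $\QDer(L)$ is a subspace. By uniqueness, $(\alpha D_1+\beta D_2)'=\alpha D_1'+\beta D_2'$, so the map $D\mapsto D'$ is linear.

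\emph{Central maps.} If $D(L)\subseteq Z(L)$, then the left-hand side $[D(x),y]+[x,D(y)]$ of \eqref{eq:qd} is identically $0$. Hence $D'=0$ is associated with $D$, and by uniqueness it is the only one.

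\emph{Derivations.} If $D$ is a derivation, then $D'=D$ satisfies \eqref{eq:qd}, so uniqueness forces $D'=D$. Conversely, if $D'=D$, then \eqref{eq:qd} is precisely the Leibniz rule, so $D$ is a derivation.

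None of these steps is a real obstacle; the one point needing care is invoking perfectness to extend the vanishing of $D'_1-D'_2$ from brackets to all of $L$.
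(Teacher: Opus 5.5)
Your proof is correct and takes the same route as the paper: subtract the two defining identities, then use $[L,L]=L$ to get $D_1'=D_2'$. You also spell out the linearity, central-map and derivation consequences, which the paper only calls straightforward (it mentions just that central maps have associated map $0$).
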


\begin{proof}
If $D_1'$ and $D_2'$ are associated with $D$, then $(D_1'-D_2')([x,y])=0$ for all
$x,y\in L$, and hence $D_1'=D_2'$ on $[L,L]=L$. The remaining statements are straightforward; note that every central map $D$ is a quasi-derivation with associated map $0$, since $[D(x),y]+[x,D(y)]=0$.
\end{proof}

Let $G$ be an abelian group and $L=\bigoplus_{g\in G}L_g$ be a $G$-graded Lie algebra. For
$v\in L$ and $k\in G$ we denote by $v_k$ the homogeneous component of degree $k$ of $v$. For
a linear map $D:L\to L$ and $g\in G,$ we define the linear map $D_g$ by
\[
D_g(x)=\big(D(x)\big)_{h+g}\quad\text{for } x\in L_h.
\]
We call $D_g$ the \emph{homogeneous component of degree $g$} of $D$. Clearly,
$D_g(L_h)\subseteq L_{h+g}$ and, for every $x\in L$, we have $D(x)=\sum\limits_{g\in G}D_g(x)$,
where only finitely many summands are nonzero. In this sense we write $D=\sum\limits_{g\in G}D_g$.
A linear map $D$ is called \emph{homogeneous of degree $g$} if $D=D_g$.

\begin{lemma}\label{lem:grading}
Let $L=\bigoplus_{g\in G}L_g$ be a $G$-graded Lie algebra and let $D\in\QDer(L)$ with
associated map $D'$. Then for every $g\in G$ the homogeneous component $D_g$ is a
quasi-derivation of $L$ and $D'_g$ is associated with $D_g$. The same holds for
$\delta$-derivations: every homogeneous component of a $\delta$-derivation is a
$\delta$-derivation.
\end{lemma}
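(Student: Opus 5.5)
The plan is to project the defining identity \eqref{eq:qd} onto a single homogeneous component. Fix $g\in G$ and take homogeneous elements $x\in L_h$ and $y\in L_k$. Since $L$ is graded, $[L_a,L_b]\subseteq L_{a+b}$. We take the component of degree $h+k+g$ on both sides of
\[
[D(x),y]+[x,D(y)]=D'([x,y]).
\]
On the left, write $D(x)=\sum_{a}(D(x))_a$. Bracketing with $y$ sends the degree-$a$ part into degree $a+k$. So the degree $h+k+g$ part of $[D(x),y]$ is $[(D(x))_{h+g},y]=[D_g(x),y]$. In the same way, the degree $h+k+g$ part of $[x,D(y)]$ is $[x,D_g(y)]$.

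On the right, $[x,y]\in L_{h+k}$. By the definition of the homogeneous component, $(D'([x,y]))_{h+k+g}=D'_g([x,y])$. This holds even when $[x,y]=0$, since both sides then vanish. Hence
\[
[D_g(x),y]+[x,D_g(y)]=D'_g([x,y])
\]
for all homogeneous $x,y$.

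It remains to pass to arbitrary $x,y\in L$. Every element of $L$ is a finite sum of homogeneous components, and both sides of the last identity are bilinear in $(x,y)$, because $D_g$ and $D'_g$ are linear. Linearity of $D_g$ is immediate from the definition: on each $L_h$ it is the linear map $D$ followed by the linear projection onto $L_{h+g}$, extended additively over the direct sum. Therefore the identity holds on all of $L\times L$. This shows that $D_g\in\QDer(L)$, with associated map $D'_g$.

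For $\delta$-derivations the argument is the same. Take $\varphi$ with $\varphi([x,y])=\delta([\varphi(x),y]+[x,\varphi(y)])$, and compare the degree $h+k+g$ components for homogeneous $x\in L_h$ and $y\in L_k$. The same computation gives $\varphi_g([x,y])=\delta([\varphi_g(x),y]+[x,\varphi_g(y)])$, and bilinearity extends this to all of $L$.

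The only point needing care is the identification of the degree-$(h+k+g)$ component of $[D(x),y]$ with $[D_g(x),y]$. This uses that $D(x)$ has only finitely many nonzero components and that bracketing with a homogeneous element shifts every degree by the same amount. No other obstacle is expected.
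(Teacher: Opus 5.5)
Your proof is correct and follows the same approach as the paper. The paper gives no written proof of this lemma, but the argument it relies on is yours: project \eqref{eq:qd} onto the degree-$(h+k+g)$ component for homogeneous $x\in L_h$, $y\in L_k$, then extend by bilinearity (the paper says as much when it reuses the idea in Lemma~\ref{lem:qc}).
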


\section{The Schr\"odinger--Witt algebras}

\begin{definition}[see \cite{H94}]\label{def:sv}
Let $\varepsilon\in\{0,\frac12\}$. The \emph{Schr\"odinger--Witt algebra}
$\SV(\varepsilon)$ is the Lie algebra with the basis
$\{L_m,\ Y_n,\ M_m \mid m\in\Z,\ n\in\varepsilon+\Z\}$ and the following nontrivial
relations:
\begin{equation}\label{eq:sv}
\begin{aligned}
&[L_m,L_n]=(m-n)L_{m+n}, && [L_m,Y_n]=\Big(\frac m2-n\Big)Y_{m+n},\\
&[L_m,M_n]=-nM_{m+n}, && [Y_m,Y_n]=(m-n)M_{m+n}.
\end{aligned}
\end{equation}
All other brackets of basis elements are zero. The algebra $\SV(\frac12)$ is called the
\emph{original} Schr\"odinger--Witt algebra and $\SV(0)$ is called the \emph{twisted}
Schr\"odinger--Witt algebra.
\end{definition}

The algebra $\SV(\varepsilon)$ is
graded by the abelian group $G=\frac12\Z\times\Z_2$:
\[
\SV(\varepsilon)_{(t,\bar0)}=\Span{L_t,M_t}\ (t\in\Z),\qquad
\SV(\varepsilon)_{(t,\bar1)}=\Span{Y_t}\ (t\in\varepsilon+\Z),
\]
and all other homogeneous components are zero; by \eqref{eq:sv},
\begin{center}
$[\SV(\varepsilon)_{(a,x)},\SV(\varepsilon)_{(b,y)}]\subseteq\SV(\varepsilon)_{(a+b,x+y)}$.
\end{center}Let $D$ be a linear map of $\SV(\varepsilon)$ and $(t,x)\in G$. If $x=\bar0$ and $t\notin\Z$,
or $x=\bar1$ and $t\notin\varepsilon+\Z$, then $D_{(t,x)}=0$, because the corresponding
homogeneous components of the image are zero. We say that a linear map is \emph{even of
degree $j\in\Z$} if it is homogeneous of degree $(j,\bar0)$, and \emph{odd of degree
$s\in\varepsilon+\Z$} if it is homogeneous of degree $(s,\bar1)$. Thus, every quasi-derivation of $\SV(\varepsilon)$ is the sum of its even
homogeneous components of degrees $j\in\Z$ and its odd homogeneous components of degrees
$s\in\varepsilon+\Z$, and all these components are quasi-derivations (Lemma \ref{lem:grading}).

\begin{lemma}[see \cite{RU06}]\label{lem:perfect}
The algebra $\SV(\varepsilon)$ is perfect and $Z(\SV(\varepsilon))=\Span{M_0}$.
\end{lemma}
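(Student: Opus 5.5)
The plan is to handle the two claims separately, working directly from the multiplication table \eqref{eq:sv}.

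\emph{Perfectness.} It suffices to show that every basis element lies in $[\SV(\varepsilon),\SV(\varepsilon)]$.
\begin{itemize}
\item For $m\neq0$ we have $[L_m,L_0]=mL_m$. For $L_0$ we use $[L_1,L_{-1}]=2L_0$.
\item For $M_n$ with $n\neq0$ we have $[L_0,M_n]=-nM_n$. For $M_0$ we pick $m\in\varepsilon+\Z$ with $m\neq0$ (for instance $m=1$ or $m=\frac12$) and use $[Y_m,Y_{-m}]=2mM_0$.
\item For $Y_n$ we use $[L_0,Y_n]=-nY_n$ whenever $n\neq0$. This covers every $Y_n$ when $\varepsilon=\frac12$. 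When $\varepsilon=0$ and $n=0$ we use $[L_2,Y_{-2}]=(1+2)Y_0=3Y_0$ instead.
\end{itemize}
Hence $[\SV(\varepsilon),\SV(\varepsilon)]=\SV(\varepsilon)$.

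\emph{Center.} First, $M_0$ is central. Indeed, $[L_m,M_0]=-0\cdot M_m=0$, and $M_0$ brackets trivially with all $Y_n$ and $M_n$.

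For the converse, take $z$ central. Since $\ad_{L_0}$ is homogeneous of degree $(0,\bar0)$, each homogeneous component of $z$ with respect to the $G$-grading is also central. So we may assume that $z$ is homogeneous.

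\emph{Odd case.} Suppose $z=Y_t$. Then $[L_0,Y_t]=-tY_t$ forces $t=0$. This is possible only for $\varepsilon=0$, and then $[Y_1,Y_0]=M_1\neq0$. So no odd central element exists.

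\emph{Even case.} Suppose $z=aL_t+bM_t$. Then
\[
0=[L_0,z]=-taL_t-tbM_t .
\]
If $t\neq0$ this gives $a=b=0$, so $t=0$. Next,
\[
0=[z,Y_n]=a\Big(-n\Big)Y_n \quad\text{for all } n .
\]
Choosing $n\neq0$ gives $a=0$. Hence $z\in\Span{M_0}$.

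The only step that needs any care is the coverage of the exceptional elements $L_0$, $M_0$ and (in the twisted case) $Y_0$ in the perfectness argument. The choices above take care of them.
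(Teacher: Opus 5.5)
Your proof is correct. The paper does not prove this lemma itself (it only cites Roger--Unterberger), but your argument follows the same template as its proof of the analogous Lemma~\ref{lem:esvperfect} for $\ESV(\varepsilon)$, with $[Y_m,Y_{-m}]=2mM_0$ and $[L_2,Y_{-2}]=3Y_0$ correctly replacing the brackets with $N_0$, which are not available in $\SV(\varepsilon)$.

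One justification needs repair. Homogeneous components of a central $z$ are central because the bracket is graded: for homogeneous $y$, the summands $[z_g,y]$ of $[z,y]=0$ lie in distinct components. It is not because $\ad_{L_0}$ is homogeneous. Alternatively, you can skip the reduction, since $\ad_{L_0}$ acts diagonally with eigenvalue minus the index. Then $[L_0,z]=0$ already gives $z\in\Span{L_0,M_0,Y_0}$ (with no $Y_0$ if $\varepsilon=\frac12$), exactly as in the paper's $\ESV$ argument.
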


By Lemmas \ref{lem:unique} and \ref{lem:perfect}, the map $D'$ associated with a
quasi-derivation $D$ of $\SV(\varepsilon)$ is unique. Moreover, if $D$ is a
homogeneous quasi-derivation of $\SV(\varepsilon)$, then $D'$ is homogeneous of the same degree, since by Lemma \ref{lem:grading} the homogeneous component of $D'$ of the degree of $D$ is also associated with $D$.

The description of derivations of $\SV(\varepsilon)$ is given in the following theorem.

\begin{theorem}[see \cite{RU06}]\label{thm:der}
$\Der\ \SV(\varepsilon)=\Inn\SV(\varepsilon)\oplus\Span{D_1}\oplus\Span{D_2}\oplus\Span{D_3}$,
where $\Inn\SV(\varepsilon)=\{\ad_x\mid x\in\SV(\varepsilon)\}$ and $D_1,D_2,D_3$ are the
outer derivations defined by
\[
\begin{aligned}
&D_1(L_m)=M_m, && D_1(Y_n)=0, && D_1(M_m)=0,\\
&D_2(L_m)=mM_m, && D_2(Y_n)=0, && D_2(M_m)=0,\\
&D_3(L_m)=0, && D_3(Y_n)=Y_n, && D_3(M_m)=2M_m.
\end{aligned}
\]
\end{theorem}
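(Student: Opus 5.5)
We will prove the theorem by decomposing an arbitrary derivation $D$ of $\SV(\varepsilon)$ into its homogeneous components with respect to the $G=\frac12\Z\times\Z_2$-grading. A derivation is a quasi-derivation with $D'=D$, so by Lemma \ref{lem:grading} each component $D_g$ is again a derivation. The plan is to show three things: every even component of degree $j$ equals $\ad$ of an element of $\SV(\varepsilon)_{(j,\bar0)}$ plus, when $j=0$, a combination of $D_1,D_2,D_3$; every odd component of degree $s$ equals $\ad_{cY_s}$; and only finitely many components are nonzero. The last point says that the sum of the inner parts is a finite sum $\ad_x$.

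\textbf{Even components.} Let $d=D_{(j,\bar0)}$ and write $d(L_m)=a_mL_{m+j}+b_mM_{m+j}$, $d(Y_n)=c_nY_{n+j}$, $d(M_m)=e_mM_{m+j}$.
\begin{itemize}
\item For $j\neq0$, subtract $\ad_{\alpha L_j+\beta M_j}$ with $\alpha,\beta$ chosen to kill the $L_0$-coefficients: $\ad_{L_j}(L_0)=jL_j$ and $\ad_{M_j}(L_0)=jM_j$.
\item Applying $d$ to $[L_0,x]=-(\deg x)x$ gives $j\,d(x)=\ad_{d(L_0)}(x)$, so $d$ vanishes on everything.
\item For $j=0$, $d$ commutes with $\ad_{L_0}$ automatically. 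Restricted to the Witt part, $a_m$ satisfies the Witt derivation equations, so $a_m=am$ and $d-\ad_{-aL_0}$ kills the $L$-part.
\item From $[L_m,L_n]$ the $b_m$ satisfy $(m-n)b_{m+n}=-n b_m\cdot$(from $e$)$+\dots$. Concretely, using $e_m$ from $[L_m,M_n]$ and $c_n$ from $[L_m,Y_n]$, $[Y_m,Y_n]$, one obtains $c_n=c$ constant and $e_m=2c$. Then $b_m$ is affine in $m$ up to an $M_0$-multiple absorbed by $\ad_{M_0}=0$; in fact $b_m=\mu+\nu m$, with the $m=0$ case handled by $\ad_{M_0}$ being zero.
\item This yields $d\in\Span{D_1,D_2,D_3}$ plus inner.
\end{itemize}

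\textbf{Odd components.} For $d=D_{(s,\bar1)}$ we have $d(L_m)=pY_{m+s}$-type terms, $d(Y_n)=q_nL_{n+s}+r_nM_{n+s}$, and $d(M_m)=u_mY_{m+s}$. Subtract $\ad_{cY_s}$ to normalize $d(L_0)$ when $s\neq0$; the same $L_0$-trick then gives $d=0$. For $s=0$ (only when $\varepsilon=0$), one must solve the recurrences directly: $[L_0,\cdot]$-commutation, then $[L_m,L_n]$, $[L_m,Y_n]$, $[Y_m,Y_n]$ and $[L_m,M_n]$ force $d=\ad_{cY_0}$.

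\textbf{Finiteness.} Each $D(L_0)$, $D(Y_{\varepsilon})$, etc.\ has finite support. Since a nonzero inner component of degree $g$ is detected on $L_0$ whenever $g\ne0$ (as $\ad_x(L_0)=-(\deg)x$), only finitely many components are nonzero.

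The main obstacle I expect is the degree-zero even case and the case $s=0$ for $\varepsilon=0$. Here the $L_0$-trick gives nothing, so one must solve the functional equations for the coefficients carefully, including the boundary cases $m=0$ and $n=0$ where the structure constants vanish.
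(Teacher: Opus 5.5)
Your route is sound and genuinely different from the paper's. The paper never treats derivations directly. It classifies all quasi-derivations (Propositions \ref{prop:even}, \ref{prop:odd} and Theorem \ref{thm:main}), and this requires solving the functional equations of Lemmas \ref{lem:witt}, \ref{lem:half}, \ref{lem:zero} in every degree: when $D'\neq D$ there is no identity $D([L_0,x])=[D(L_0),x]+[L_0,D(x)]$ to exploit. Theorem \ref{thm:der} is then obtained by imposing $D'=D$ on the decomposition built in the proof of Theorem \ref{thm:main}, which kills the $\lambda\Id$, $\Omega_t$, $\Psi_t$, $\Theta_s$ and central parts. You use $D'=D$ from the outset. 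For a homogeneous derivation whose degree has first coordinate $j\neq0$, the weight identity gives $d=\ad_{j^{-1}d(L_0)}$ at once, and finiteness follows from the finite support of $D(L_0)$. Only degrees $(0,\bar0)$ and $(0,\bar1)$ then need computation, so your argument is much shorter. The paper's heavier machinery is what buys the description of all quasi-derivations and all $\delta$-derivations.

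As written, the degree-zero part needs repair.
\begin{enumerate}
\item The relation from $[L_m,L_n]$ is $(m-n)b_{m+n}=mb_m-nb_n$. No $e$ enters, and this is Lemma \ref{lem:zero} with $j=0$, $b'=b$.
\item $b_0$ is not ``absorbed by $\ad_{M_0}=0$'': $\ad_{M_0}$ absorbs nothing, and $L_0\mapsto M_0$ is not a derivation. What fixes $b_0$ is the case $m=1$, $n=-1$, which gives $2b_0=b_1+b_{-1}$.
\item You dropped a possible $L_m$-component of $d(M_m)$. It is killed by the $L$-part of the $[Y_m,Y_n]$ relation.
\item The odd degree-zero case ($\varepsilon=0$) is only asserted. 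Here the weight identity yields merely $d(L_0)\in Z(\SV(0))$, i.e.\ $f_0=0$, and $\ad_{Y_0}(L_0)=0$, so no normalization at $L_0$ is possible. You must solve it directly:
\begin{itemize}
\item $[L_m,L_n]$ gives $(m-n)f_{m+n}=(m-\frac n2)f_m+(\frac m2-n)f_n$, so $f_m=\psi m$ (Lemma \ref{lem:half} with $s=0$, where $f'=f$ forces $\phi=0$);
\item $[Y_m,M_n]=0$ gives $-nh_m+(m-n)g_n=0$, hence $h=g=0$ as in Step 1 of Proposition \ref{prop:odd};
\item $[L_m,Y_n]$, first with $m=2n$ and then with $m=1$, $n=-1$, gives $k_n=2\psi n$.
\end{itemize}
Hence $d=-2\psi\ad_{Y_0}$.
\item You still have to verify that $D_1,D_2,D_3$ are derivations and that the sum is direct. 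For the latter, apply a vanishing $\ad_x+\sum_i\lambda_iD_i$ degree by degree to $L_m$ and $Y_n$.
\item Minor: $\ad_x(L_0)=(\deg x)\,x$, not $-(\deg x)\,x$.
\end{enumerate}
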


Theorem \ref{thm:der} will also be deduced from our results.

For every $t\in\Z$ and $s\in\varepsilon+\Z$ the inner derivations act as follows:
\begin{equation}\label{eq:ad}
\begin{aligned}
&\ad_{L_t}(L_m)=(t-m)L_{m+t}, && \ad_{L_t}(Y_n)=\Big(\frac t2-n\Big)Y_{n+t},
&& \ad_{L_t}(M_m)=-mM_{m+t},\\
&\ad_{M_t}(L_m)=tM_{m+t}, && \ad_{M_t}(Y_n)=0, && \ad_{M_t}(M_m)=0,\\
&\ad_{Y_s}(L_m)=\Big(s-\frac m2\Big)Y_{m+s}, && \ad_{Y_s}(Y_n)=(s-n)M_{n+s},
&& \ad_{Y_s}(M_m)=0.
\end{aligned}
\end{equation}

The descriptions of $\frac12$-derivations and of transposed Poisson structures of
$\SV(\varepsilon)$ were obtained by Yuan and Hua \cite{YH22}. Both of them will also be
deduced from our results (Corollaries \ref{cor:delta} and \ref{cor:tp} below).

\begin{theorem}[see \cite{YH22}]\label{thm:half}
Every $\frac12$-derivation of $\SV(\varepsilon)$ is trivial, i.e.,
$\Delta_{\frac{1}{2}}\big(\SV(\varepsilon)\big)=\Span{\Id}$. Moreover, all transposed Poisson structures on
$\SV(\varepsilon)$ are trivial.
\end{theorem}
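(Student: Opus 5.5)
We prove Theorem \ref{thm:half} by a direct computation on homogeneous components; it will also follow from the general description of quasi-derivations obtained later. Let $\varphi$ be a $\frac12$-derivation of $\SV(\varepsilon)$. By Lemma \ref{lem:grading} every homogeneous component of $\varphi$ is again a $\frac12$-derivation, so it suffices to treat an even component of degree $j\in\Z$ and an odd component of degree $s\in\varepsilon+\Z$ separately. We will show that odd components vanish, even components with $j\neq0$ vanish, and the degree-zero component is a scalar multiple of $\Id$.

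\textbf{The Witt part.} For an even $\varphi$ of degree $j$ write
\[
\varphi(L_m)=a_mL_{m+j}+b_mM_{m+j},\quad \varphi(Y_n)=c_nY_{n+j},\quad \varphi(M_m)=d_mM_{m+j}.
\]
Projecting the identity $2\varphi([L_m,L_n])=[\varphi(L_m),L_n]+[L_m,\varphi(L_n)]$ onto $\Span{L_k}$ shows that $(a_m)$ defines a $\frac12$-derivation of the Witt algebra. By the known result for the Witt algebra \cite{FKL21} (which can also be reproved quickly from the cases $n=0$ and $n=\pm1$), this forces $a_m$ to be constant; the same computation also handles the case $j\ne0$ directly. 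Next, the identities $2\varphi([L_m,Y_n])=\dots$ and $2\varphi([L_m,M_n])=\dots$, evaluated at $m=0,\pm1$, give linear recursions for $c_n$ and $d_m$ of the form $(\frac m2-n)(2c_{m+n}-c_n)=(\frac m2-n-j)a_m$ (up to the precise coefficients). These recursions force $c_n=d_m=a$ when $j=0$, and force everything to vanish when $j\neq0$. The $M$-components $b_m$ are killed by the $[L_m,L_n]$ identity projected onto $M$ together with the $[Y,Y]$ relation: since $M$ is abelian, the term $b$ contributes only through $[L,M]$.

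\textbf{Odd components.} For an odd $\varphi$ of degree $s$ we have $\varphi(L_m)=\alpha_mY_{m+s}$, $\varphi(Y_n)=\beta_nL_{n+s}+\gamma_nM_{n+s}$ and $\varphi(M_m)=\mu_mY_{m+s}$. The identity with $x=M_m$, $y=M_n$ is trivial, but $x=L_m$, $y=M_n$ gives $-2n\mu_{m+n}Y=[\alpha_mY_{m+s},M_n]+[L_m,\mu_nY_{n+s}]$, so $\mu$ satisfies a recursion that makes it zero. After that, the identities for $(L,L)$ and $(Y,Y)$ force $\alpha=0$ and $\beta=\gamma=0$. The main obstacle is handling the small-index exceptions, where a coefficient like $\frac m2-n$ vanishes; this is resolved by choosing two different auxiliary indices.

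\textbf{Transposed Poisson structures.} Let $\cdot$ be a transposed Poisson structure. Each multiplication operator $x\mapsto z\cdot x$ is a $\frac12$-derivation, hence equals $\lambda(z)\Id$. Commutativity then gives $\lambda(z)x=\lambda(x)z$ for all $x,z$. Since $\dim\SV(\varepsilon)>1$, taking $x,z$ linearly independent yields $\lambda=0$, so the product is zero.
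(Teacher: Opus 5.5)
Your route differs from the paper's. The paper never treats the $\frac12$-case on its own. It first classifies all homogeneous quasi-derivations (Propositions \ref{prop:even} and \ref{prop:odd}, which rest on Lemmas \ref{lem:witt}, \ref{lem:half}, \ref{lem:zero}). Then, in Corollary \ref{cor:delta}, it imposes $D'=cD$ on the resulting normal forms. The transposed Poisson statement is Corollary \ref{cor:tp}, reached through Lemma \ref{lem:tpbider} and Corollary \ref{cor:bider}. You impose $D'=2D$ from the outset, so each functional equation involves only one unknown sequence and the recursions close on themselves. This is shorter and self-contained; the paper's detour costs more but yields every $\delta$ and all quasi-derivations at once. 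Your transposed Poisson argument ($z\cdot x=\lambda(z)x$, then commutativity with linearly independent $x,z$) is essentially the paper's $\delta=\frac12$ argument in Corollary \ref{cor:bider}, applied directly to the commutative product.

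As written, however, two steps are wrong and need repair.

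\textbf{Even case.} Your ansatz $\varphi(M_m)=d_mM_{m+j}$ drops a possible component $\tilde d_mL_{m+j}$, and this must be eliminated first. For instance, the $L$-projection of the $(Y_m,Y_n)$ identity gives $2(m-n)\tilde d_{m+n}=0$ for all $m,n\in\varepsilon+\Z$, hence $\tilde d=0$. Alternatively, use the pair $(Y_m,M_n)$ as in \eqref{e7_1}.

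\textbf{Odd case.} The $(Y,Y)$ identity cannot give $\gamma=0$: since $[M,Y]=0$, the coefficients $\gamma_n$ never appear in it. You need the $M$-projection of the $(L_m,Y_n)$ identity, which after $\alpha=0$ reads $-(n+s)\gamma_n=(m-2n)\gamma_{m+n}$. Taking $m=2n$ gives $\gamma_n=0$ for $n\neq-s$. Then taking $m=-s-n$ with $n\neq-s,-\frac s3$ gives $\gamma_{-s}=0$.

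\textbf{Smaller points.} Your $c$-recursion is correct only for $j=0$; the right one is $2(\frac m2-n)c_{m+n}=(\frac{m+j}2-n)a_m+(\frac m2-n-j)c_n$. Also, $b$ is killed by the $M$-projection $(m+j)b_m-(n+j)b_n=2(m-n)b_{m+n}$ of the $(L,L)$ identity alone (Lemma \ref{lem:zero} with $b'=2b$); the $(Y,Y)$ identity plays no role there.

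With these corrections your outline goes through.
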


\subsection{Three functional equations}\label{section3}

In this subsection we solve three functional equations arising in the study of homogeneous
quasi-derivations of $\SV(\varepsilon)$; they describe the action of the Witt part
$\Span{L_m\mid m\in\Z}$ on the parts $\Span{L_m}$, $\Span{Y_n}$ and $\Span{M_m}$. For a
sequence $\{u_m\}_{m\in\Z}$ of complex numbers, let
\[
w_m=u_{m+2}-2u_{m+1}+u_m
\]
be its second differences. We shall use two elementary facts: $w_m=0$ for all $m$ if and only
if $u_m=\alpha+\beta m$ for some $\alpha,\beta\in\C$; and the third differences
\begin{center}
$w_{m+1}-w_m=u_{m+3}-3u_{m+2}+3u_{m+1}-u_m$
\end{center} 
vanish for all $m$ if and only if
$u_m=\alpha+\beta m+\gamma m^2$ for some $\alpha,\beta,\gamma\in\C$.

If $u_{m+1}-u_m=c$ for all $m$, induction in both directions gives $u_m=u_0+cm$.
Applying this to $d_m=u_{m+1}-u_m$, which satisfies $d_{m+1}-d_m=w_m$, proves the first fact.
For the second, vanishing third differences give $w_m=w_0=:2\gamma$ for all $m$; since
$\gamma m^2$ has constant second differences $2\gamma$, the sequence $u_m-\gamma m^2$ has
vanishing second differences, and the first fact applies. The converses are direct computations.

\begin{lemma}\label{lem:witt}
Let $j\in\C$ and let $\{a_m\}_{m\in\Z}$, $\{a'_m\}_{m\in\Z}$ be complex numbers such that
\begin{equation}\label{eq:witt}
(m-n+j)a_m+(m-n-j)a_n=(m-n)a'_{m+n}\quad\text{for all } m,n\in\Z.
\end{equation}
Then there are $\alpha,\beta\in\C$ such that $a_m=\alpha+\beta m$ and
$a'_m=2\alpha+\beta(m+j)$ for all $m\in\Z$.
\end{lemma}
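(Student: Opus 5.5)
The plan is to first check that the proposed family really consists of solutions, and then to show that a solution is pinned down by the two values $a_0,a_1$. For the first part, substitute $a_m=\alpha+\beta m$ into the left-hand side of \eqref{eq:witt}:
\[
(m-n+j)(\alpha+\beta m)+(m-n-j)(\alpha+\beta n)=2\alpha(m-n)+\beta(m-n)(m+n)+\beta j(m-n).
\]
This equals $(m-n)\big(2\alpha+\beta(m+n+j)\big)$, so the pair $a_m=\alpha+\beta m$, $a'_m=2\alpha+\beta(m+j)$ satisfies \eqref{eq:witt}. Given an arbitrary solution $(a,a')$, choose $\alpha=a_0$ and $\beta=a_1-a_0$, and subtract this particular solution. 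Since \eqref{eq:witt} is linear in the pair $(a,a')$, we may assume $a_0=a_1=0$. It then remains to show that $a_m=0$ for all $m$ and $a'_m=0$ for all $m$.

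Next we eliminate $a'$. Putting $n=0$ in \eqref{eq:witt} gives
\[
(m+j)a_m+(m-j)a_0=m\,a'_m ,
\]
so with $a_0=0$ we get $a'_k=\frac{k+j}{k}a_k$ for all $k\neq0$. Substituting this for $a'_{m+n}$ with $m+n\neq0$ yields an equation in $a$ alone:
\[
(m+n)\big[(m-n+j)a_m+(m-n-j)a_n\big]=(m-n)(m+n+j)a_{m+n}.
\]
Taking $n=1$ and using $a_1=0$ gives the recursion
\[
(m+1)(m-1+j)\,a_m=(m-1)(m+1+j)\,a_{m+1}.
\]
Starting from $a_1=0$, induction upward yields $a_{m+1}=0$ whenever $(m-1)(m+1+j)\neq0$. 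Starting from $a_0=0$, induction downward yields $a_m=0$ whenever $(m+1)(m-1+j)\neq0$. Afterwards, $a'_k=0$ for $k\neq0$ follows from the formula above. For $a'_0$, put $n=-m$ with $m\neq0$ in \eqref{eq:witt}: then $2m\,a'_0=(2m+j)a_m+(2m-j)a_{-m}=0$.

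The main obstacle is the set of exceptional indices where a recursion coefficient vanishes. These are $m=1$ in the upward step, which is needed to reach $a_2$; $m=-1$ in the downward step, which is needed to reach $a_{-1}$; and the index $m=-1-j$ or $m=1-j$, which occurs when $j$ is an integer. At such a point the recursion gives no information. There are only finitely many exceptional indices (at most four), so the plan is to treat them separately with other specializations of the reduced equation:
\begin{enumerate}
\item For $a_2$, use $n=-1$ together with $m=3$, or combine the equations for $(m,n)=(3,-1)$ and $(2,1)$.
\item More generally, at each exceptional index use the choice $n=2$ or $n=-1$. These produce relations between $a_m$, $a_{m+2}$ or $a_{m-1}$ whose coefficients vanish at different points, so that $a_k$ at the exceptional $k$ is expressed through values already known to be $0$.
\item Once one value past a blocked index is known to be $0$, restart the $n=1$ induction from it.
\end{enumerate}
I expect the real work to be a short case check, for integer $j$ near the exceptional indices, that some choice of $(m,n)$ always has a nonzero coefficient in front of the unknown value.
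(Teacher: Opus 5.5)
Your preliminary steps are correct: the affine family does solve the equation, the normalization $a_0=a_1=0$ is legitimate, and the $n=1$ recursion $(m+1)(m-1+j)a_m=(m-1)(m+1+j)a_{m+1}$ is right. The gap is that you treat the blocked steps as a boundary nuisance, when they are the whole content of the lemma. The first upward step ($m=1$, which reads $2ja_1=0$) and the first downward step ($m=-1$, which reads $0=-2ja_0$) are blocked for \emph{every} $j$. So the recursion started from $a_0=a_1=0$ proves nothing by itself. For non-integer $j$ it only shows that $a$ is determined by the pair $(a_2,a_{-1})$, which still leaves a two-dimensional family.

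Your proposed remedies do not close this. The $n=2$ and $n=-1$ specializations contain $a_2$ and $a_{-1}$ themselves, so they do not express the exceptional values through values already known to vanish. For example, $(m,n)=(2,-1)$ gives $(3+j)a_2+(3-j)a_{-1}=0$, which only ties the two unknowns together. Likewise, $(3,-1)$ and $(2,1)$ are two equations in the three unknowns $a_{-1},a_2,a_3$.

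What is missing is a set of relations that determines $a_{-1},a_2,a_3$ simultaneously, together with a proof that it is nondegenerate for every $j\in\C$. For instance, $(2,-1)$, $(2,1)$ and $(3,-1)$ give
\[
(3-j)a_{-1}+(3+j)a_2=0,\quad 3(1+j)a_2-(3+j)a_3=0,\quad (4-j)a_{-1}-2(2+j)a_2+(4+j)a_3=0.
\]
The determinant of this system in $(a_{-1},a_2,a_3)$ is $(3-j)j(j+5)-(4-j)(3+j)^2=-36$. Only after this computation do you get $a_{-1}=a_2=a_3=0$.

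After that, the remaining blocks for integer $j$ are indeed harmless. With $a_{-1}=0$, the $n=-1$ relation at $m=1-j$ reads $-2ja_{1-j}=0$, and at $m=-j$ it reads $(-j-1)a_{-j}=(j-1)a_{-j-1}$.

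The constant $36$ is exactly the nonvanishing fact on which the paper's proof rests. The paper derives $(3+j)w_{m+1}+(3-j)w_m=0$ and $(4+j)w_{m+2}+4w_{m+1}+(4-j)w_m=0$ for the second differences $w_m$. It then combines them into $36w_m=0$, uniformly in $m$ and $j$, so no exceptional indices ever arise.
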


\begin{proof}
Putting $n=m+k$ in \eqref{eq:witt}, we obtain
\begin{equation}\label{eq:witt-k}
k\,a'_{2m+k}=(k-j)a_m+(k+j)a_{m+k}\quad\text{for all } m,k\in\Z.
\end{equation}
Taking $k=3$ in \eqref{eq:witt-k}, and then replacing $m$ by $m+1$ and taking $k=1$, we get
$3a'_{2m+3}=(3-j)a_m+(3+j)a_{m+3}$ and $a'_{2m+3}=(1-j)a_{m+1}+(1+j)a_{m+2}$. Eliminating
$a'_{2m+3}$, we obtain
\[
(3+j)a_{m+3}-3(1+j)a_{m+2}-3(1-j)a_{m+1}+(3-j)a_m=0.
\]
In terms of the second differences $w_m=a_{m+2}-2a_{m+1}+a_m$ this relation reads
\begin{equation}\label{eq:R3}
(3+j)w_{m+1}+(3-j)w_m=0 \quad\text{for all } m\in\Z.
\end{equation}
Similarly, taking $k=4$ in \eqref{eq:witt-k}, and then replacing $m$ by $m+1$ and taking
$k=2$, we get
\begin{center}
$4a'_{2m+4}=(4-j)a_m+(4+j)a_{m+4}$ and
$2a'_{2m+4}=(2-j)a_{m+1}+(2+j)a_{m+3}$,
\end{center} which gives
$(4+j)a_{m+4}-2(2+j)a_{m+3}-2(2-j)a_{m+1}+(4-j)a_m=0$, i.e.,
\begin{equation}\label{eq:R4}
(4+j)w_{m+2}+4w_{m+1}+(4-j)w_m=0 \quad\text{for all } m\in\Z.
\end{equation}
Now multiply \eqref{eq:R4} by $(3+j)^2$ and use \eqref{eq:R3} (for $m$ and for $m+1$) in
the form 
\begin{center}
$(3+j)w_{m+1}=(j-3)w_m$ and $(3+j)^2w_{m+2}=(j-3)^2w_m$.
\end{center} We obtain
\[
\big((4+j)(j-3)^2+4(j+3)(j-3)+(4-j)(j+3)^2\big)w_m=0.
\]
A direct computation shows that the expression in brackets is equal to $36$ (for every $j\in\C$, including $j=-3$). Hence
$w_m=0$ for all $m\in\Z$, and there are $\alpha,\beta\in\C$ with $a_m=\alpha+\beta m$.

Setting $n=0$ in \eqref{eq:witt}, we have
\begin{center}$ma'_m=(m+j)(\alpha+\beta m)+(m-j)\alpha=m\big(2\alpha+\beta(m+j)\big)$,\end{center} which gives
$a'_m=2\alpha+\beta(m+j)$ for $m\ne0$. Setting $m=1,\ n=-1$ in \eqref{eq:witt}, we have
\begin{center}$2a'_0=(2+j)(\alpha+\beta)+(2-j)(\alpha-\beta)=4\alpha+2j\beta$, i.e.,
$a'_0=2\alpha+\beta j$.\end{center}
\end{proof}

\begin{lemma}\label{lem:half}
Let $s\in\C$ and let $\{f_m\}_{m\in\Z}$, $\{f'_m\}_{m\in\Z}$ be complex numbers such that
\begin{equation}\label{eq:half}
\Big(m+s-\frac n2\Big)f_m+\Big(\frac m2-n-s\Big)f_n=(m-n)f'_{m+n}\quad
\text{for all } m,n\in\Z.
\end{equation}
Then there are $\phi,\psi\in\C$ such that $f_m=\phi+\psi m$ and
$f'_m=\frac32\phi+\psi(m+s)$ for all $m\in\Z$.
\end{lemma}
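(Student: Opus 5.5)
The plan is to reduce \eqref{eq:half} to a homogeneous problem with zero initial data and then propagate the zeros by recursions coming from small values of $n$. The main difficulty, unlike in Lemma \ref{lem:witt}, is that the coefficients of \eqref{eq:half} depend on $m$. Eliminating $f'$ therefore gives recursions whose coefficients can vanish for particular $m$ and $s$.

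\emph{Step 1: the proposed solutions work, and we may normalize.} First I would check directly that $f_m=\phi+\psi m$, $f'_m=\frac32\phi+\psi(m+s)$ satisfies \eqref{eq:half}. For $f_m=1$ the left side is $\frac32(m-n)$. For $f_m=m$ it is $(m-n)(m+n+s)$, after expanding $m^2+sm-\frac{mn}2+\frac{mn}2-n^2-sn$. Since \eqref{eq:half} is linear in the pair $(f,f')$, I can subtract the solution with $\phi=f_0$ and $\psi=f_1-f_0$. It then suffices to show that any solution with $f_0=f_1=0$ has $f_m=0$ and $f'_m=0$ for all $m$.

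\emph{Step 2: express $f'$ through $f$.} Setting $n=0$ in \eqref{eq:half} gives $mf'_m=(m+s)f_m+(\frac m2-s)f_0$. In the normalized situation this becomes $f'_m=\frac{m+s}{m}f_m$ for $m\neq0$. Setting $m=1,n=-1$ gives $2f'_0=(\frac32+s)f_1-(\frac32+s)f_{-1}$, so $f'_0$ is determined by $f_{\pm1}$. Hence it suffices to prove $f_m=0$ for all $m$.

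\emph{Step 3: recursions.} Take $n=1$ and $m\neq-1$. Using $f_1=0$ and Step 2,
\[
(m+1)\Big(m+s-\tfrac12\Big)f_m=(m-1)(m+1+s)f_{m+1}.
\]
Take $n=-1$ and $m\neq1$. Using Step 2 for index $m-1$, the term containing $f_{-1}$ remains:
\[
(m-1)\Big(m+s+\tfrac12\Big)f_m+(m-1)\Big(\tfrac m2+1-s\Big)f_{-1}=(m+1)(m-1+s)f_{m-1}.
\]
Start from $f_1=0$. For $m\geq1$, the first recursion forces $f_{m+1}=0$ whenever $(m-1)(m+1+s)\neq0$. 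For $m=1$ this coefficient vanishes, so $f_2$ must come separately. I would get it from the pair $(m,n)=(2,-1)$ or from $(3,-1)$ together with $(2,1)$, so as to obtain two independent linear relations among $f_{-1},f_2,f_3$. A similar downward induction with the first recursion read backwards, or with the second recursion, should give $f_{-1}=0$ and then $f_m=0$ for all $m<0$.

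\emph{Main obstacle.} The hardest part will be the finitely many exceptional indices, namely where $m+1+s=0$ or $m+s-\frac12=0$ in the recursions. Because $s$ is an arbitrary complex number, each recursion can break at one place. My first attempt at an exceptional index $m_0$ is to use a second equation, such as $n=2$ or $n=-2$ (again with Step 2 to eliminate $f'$), which links $f_{m_0+1}$ to already known values with a coefficient that does not vanish for the same $s$. Failing that, I would mimic Lemma \ref{lem:witt}: write the eliminated four-term relations between $f_m,\dots,f_{m+3}$ coming from $k=3,1$ and $k=4,2$, and show their combination forces the second differences to vanish. This requires checking that a resulting polynomial in $m$ and $s$ is nonzero, generalizing the constant $36$ found there. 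Once $f\equiv0$, Step 2 gives $f'\equiv0$ and the lemma follows.
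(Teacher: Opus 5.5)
Your normalization and the two recursions are correct, apart from a slip: the pair $(1,-1)$ gives $2f'_0=(\frac32+s)f_1+(\frac32-s)f_{-1}$. But the proposal has a genuine gap. It stops exactly where the lemma's content lies, namely showing that the system degenerates for no $s\in\C$; this is left as ``should give'', ``my first attempt'', ``failing that''.

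Concretely, the base step does not close as you describe it.
\begin{itemize}
\item Two homogeneous relations among $f_{-1},f_2,f_3$ always admit a nonzero solution.
\item $f_{-1}$ cannot be supplied afterwards by a downward induction with either recursion alone. Once $f_0=f_1=0$, the $n=1$ relation at $m=-1$ and the $n=-1$ relations at $m=0,-1$ are vacuous: they are the relations defining $f'_0$ and $f'_{-1}$, or the trivial pair $(-1,-1)$.
\end{itemize}
So $f_{-1}$ only comes out of a joint system whose nondegeneracy must be verified for every $s$. For instance, the pairs $(2,1)$, $(2,-1)$, $(3,-1)$ give
\[
3\big(s+\tfrac32\big)f_2=(s+3)f_3,\qquad \big(s+\tfrac52\big)f_2+(2-s)f_{-1}=0,\qquad \big(s+\tfrac72\big)f_3+\big(\tfrac52-s\big)f_{-1}=2(s+2)f_2,
\]
and the determinant of this system in $f_2,f_3,f_{-1}$ is, up to sign, the constant $\frac{45}{4}$. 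Nothing of this kind appears in your proposal.

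Your fallback criterion is also too weak. A ``nonzero polynomial in $m$ and $s$'' may still vanish at particular $(m,s)$ and recreate exceptional indices; you need a coefficient that vanishes nowhere.

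The paper obtains exactly such a coefficient. Set $\tau=\frac m2+s$ and $w_m=f_{m+2}-2f_{m+1}+f_m$. Eliminating $f'_{2m+3}$ between the pairs $(m,m+3)$ and $(m+1,m+2)$, and $f'_{2m+4}$ between $(m,m+4)$ and $(m+1,m+3)$, gives $(\tau+3)w_{m+1}=(\tau-\frac32)w_m$ and $-(\tau+4)w_{m+2}-3w_{m+1}+(\tau-2)w_m=0$. Multiplying the latter by $(\tau+\frac72)(\tau+3)$ and substituting the former yields $\frac{45}{4}w_m=0$ identically in $\tau$, so no base cases or exceptional indices arise.

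Your recursion route can also be completed, but only with checks you have not made. Assume $f_{-1}=f_0=f_1=f_2=0$.
\begin{itemize}
\item For $m\ge2$, the coefficients of $f_{m+1}$ in the $n=1$ relation at $m$ and in the $n=-1$ relation at $m+1$ are $(m-1)(m+1+s)$ and $m(m+s+\frac32)$. These are never simultaneously zero.
\item For $m\le-3$, the coefficients of $f_m$ in the same two relations are $(m+1)(m+s-\frac12)$ and $(m+2)(m+s)$, again never simultaneously zero.
\item At $m=-2$ you only get $(s-\frac52)f_{-2}=0$. For $s=\frac52$, which actually occurs for odd quasi-derivations when $\varepsilon=\frac12$, you need a further pair such as $(2,-2)$, which gives $(3-s)f_{-2}=0$.
\end{itemize}
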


\begin{proof}
For $m\in\Z$ we set $\tau=\tau_m=\frac m2+s$. Putting $n=m+k$ in \eqref{eq:half}, we obtain
\begin{equation}\label{eq:half-k}
\Big(\tau_m-\frac k2\Big)f_m-(\tau_m+k)f_{m+k}=-k\,f'_{2m+k}
\quad\text{for all } m,k\in\Z.
\end{equation}
Taking $k=3$ in \eqref{eq:half-k}, and then replacing $m$ by $m+1$ (note that
$\tau_{m+1}=\tau+\frac12$) and taking $k=1$, we have
\[
\Big(\tau-\frac32\Big)f_m-(\tau+3)f_{m+3}=-3f'_{2m+3},\qquad
\tau f_{m+1}-\Big(\tau+\frac32\Big)f_{m+2}=-f'_{2m+3}.
\]
Eliminating $f'_{2m+3}$, we get
$(\tau+3)f_{m+3}-3(\tau+\frac32)f_{m+2}+3\tau f_{m+1}-(\tau-\frac32)f_m=0$. In terms of the
second differences $w_m=f_{m+2}-2f_{m+1}+f_m$ this relation reads
\begin{equation}\label{eq:hR3}
(\tau+3)w_{m+1}-\Big(\tau-\frac32\Big)w_m=0\quad\text{for all } m\in\Z.
\end{equation}
Similarly, taking $k=4$ in \eqref{eq:half-k}, and then replacing $m$ by $m+1$ and taking
$k=2$, we have
\[
(\tau-2)f_m-(\tau+4)f_{m+4}=-4f'_{2m+4},\qquad
\Big(\tau-\frac12\Big)f_{m+1}-\Big(\tau+\frac52\Big)f_{m+3}=-2f'_{2m+4},
\]
which gives
$(\tau-2)f_m-2(\tau-\frac12)f_{m+1}+2(\tau+\frac52)f_{m+3}-(\tau+4)f_{m+4}=0$, i.e.,
\begin{equation}\label{eq:hR4}
-(\tau+4)w_{m+2}-3w_{m+1}+(\tau-2)w_m=0\quad\text{for all } m\in\Z.
\end{equation}
Replacing $m$ by $m+1$ in \eqref{eq:hR3}, we have
$(\tau+\frac72)w_{m+2}=(\tau-1)w_{m+1}$. Now multiply \eqref{eq:hR4} by
$(\tau+\frac72)(\tau+3)$ and use $(\tau+\frac72)(\tau+3)w_{m+2}=(\tau-1)(\tau-\frac32)w_m$
and $(\tau+3)w_{m+1}=(\tau-\frac32)w_m$. We obtain
\[
0=\Big((\tau+4)(\tau-1)\Big(\tau-\frac32\Big)+3\Big(\tau+\frac72\Big)\Big(\tau-\frac32\Big)
-(\tau-2)\Big(\tau+\frac72\Big)(\tau+3)\Big)w_m=\frac{45}{4}w_m.
\]

 Hence $w_m=0$ for all
$m\in\Z$, and there are $\phi,\psi\in\C$ with $f_m=\phi+\psi m$.

Setting $n=0$ in \eqref{eq:half}, we get
$mf'_m=(m+s)(\phi+\psi m)+(\frac m2-s)\phi=m\big(\frac32\phi+\psi(m+s)\big)$, which gives
the required formula for $f'_m$ with $m\neq 0$. Setting $m=1,\ n=-1$ in \eqref{eq:half}, we
get $2f'_0=(\frac32+s)(\phi+\psi)+(\frac32-s)(\phi-\psi)=3\phi+2s\psi$, i.e.,
$f'_0=\frac32\phi+\psi s$.
\end{proof}

\begin{lemma}\label{lem:zero}
Let $j\in\Z$ and let $\{b_m\}_{m\in\Z}$, $\{b'_m\}_{m\in\Z}$ be complex numbers such that
\begin{equation}\label{eq:zero}
(m+j)b_m-(n+j)b_n=(m-n)b'_{m+n}\quad\text{for all } m,n\in\Z.
\end{equation}
Then there are $\beta_1,\beta_2\in\C$ such that $b_m=\beta_1+\beta_2m$ for
$m\neq -j$ and $b'_m=\beta_1+\beta_2(m+j)$ for all $m\in\Z$.
\end{lemma}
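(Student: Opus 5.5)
Set $c_m=(m+j)b_m$, so that \eqref{eq:zero} reads $c_m-c_n=(m-n)b'_{m+n}$ for all $m,n\in\Z$. We first determine $b'$, and then read off $b$ from the relation $c_m=(m+j)b_m$. This substitution is natural: the coefficient of $b_m$ vanishes exactly at $m=-j$, so $b_{-j}$ never occurs in the equation. That is why no condition on $b_{-j}$ can be expected, and why $j$ must be an integer for this exceptional index to actually occur.

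\textbf{Step 1: $c$ is quadratic.} Put $n=m+k$ to get $c_{m+k}-c_m=k\,b'_{2m+k}$. Take $k=3$ at $m$, and $k=1$ at $m+1$; both involve $b'_{2m+3}$. Eliminating it gives
\[
c_{m+3}-c_m=3(c_{m+2}-c_{m+1}),
\]
so the third differences of $(c_m)$ vanish. By the elementary fact recalled before Lemma \ref{lem:witt}, $c_m=\alpha+\beta m+\gamma m^2$ for some $\alpha,\beta,\gamma\in\C$.

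\textbf{Step 2: formula for $b'$.} Substituting into $c_m-c_n=(m-n)b'_{m+n}$ gives
\[
(m-n)\big(\beta+\gamma(m+n)\big)=(m-n)b'_{m+n}.
\]
Hence $b'_p=\beta+\gamma p$ whenever $p=m+n$ with $m\ne n$. Every integer $p$ can be written this way (take $m=p+1$, $n=-1$), so $b'_p=\beta+\gamma p$ for all $p\in\Z$.

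\textbf{Step 3: recovering $b$ and the constraint.} Now use that $c_{-j}=0$, since its coefficient $m+j$ vanishes there; so $\alpha-\beta j+\gamma j^2=0$. Therefore $c_m$ has the root $m=-j$ and factors as $c_m=(m+j)(\beta_1+\beta_2 m)$. Comparing coefficients gives $\beta_2=\gamma$ and $\beta_1+j\beta_2=\beta$. Dividing by $m+j$ for $m\neq-j$ yields $b_m=\beta_1+\beta_2 m$. Finally
\[
b'_m=\beta+\gamma m=\beta_1+\beta_2(m+j),
\]
as claimed.

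\textbf{Main obstacle.} The only delicate point is Step 3: we must verify that $c$ genuinely vanishes at $m=-j$, which holds directly from the definition of $c$ since $j\in\Z$. We must also note that $b_{-j}$ is unconstrained, which is why the statement excludes $m=-j$. The rest is routine finite-difference elimination, simpler than in Lemma \ref{lem:witt} because there are no $j$-dependent weights.
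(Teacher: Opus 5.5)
Your proof is correct and follows essentially the same route as the paper: substitute $c_m=(m+j)b_m$, eliminate $b'_{2m+3}$ to make the third differences vanish, read off $b'$ from $c_m-c_n=(m-n)b'_{m+n}$, and use $c_{-j}=0$ to factor out $m+j$. One small slip: your witness $m=p+1$, $n=-1$ fails for $p=-2$, since then $m=n$; instead take e.g.\ $m=p$, $n=0$ for $p\neq0$ and $m=1$, $n=-1$ for $p=0$.
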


\begin{proof}
Set $g_m=(m+j)b_m$, so that $g_m-g_n=(m-n)b'_{m+n}$. Taking $(m,n)=(m,m+3)$ and
$(m,n)=(m+1,m+2)$, we get $g_m-g_{m+3}=-3b'_{2m+3}$ and $g_{m+1}-g_{m+2}=-b'_{2m+3}$.
Hence \begin{center}$g_{m+3}-3g_{m+2}+3g_{m+1}-g_m=0$ for all $m\in\Z$,\end{center} and there are $P,Q,R\in\C$ such
that $g_m=P+Qm+Rm^2$ for all $m\in\Z$. Then\begin{center}
$(m-n)b'_{m+n}=g_m-g_n=(m-n)\big(Q+R(m+n)\big)$.\end{center} For every $k\in\Z$ we can choose $m\ne n$
with $m+n=k$, so $b'_k=Q+Rk$. Finally, $0=g_{-j}=P-Qj+Rj^2$, hence $g_m=(m+j)(Q-Rj+Rm)$.
Setting $\beta_1=Q-Rj$ and $\beta_2=R$, we get
\begin{center}$(m+j)b_m=(m+j)(\beta_1+\beta_2m)$ for all
$m\in\Z$ and $b'_m=\beta_1+\beta_2(m+j)$.\end{center}
\end{proof}

\subsection{Quasi-derivations of the Schr\"odinger--Witt algebras}\label{section4}

\begin{definition}\label{def:new}
For $t\in\Z$ and $s\in\varepsilon+\Z$ we define the linear maps
$\Omega_t,\Psi_t,\Theta_s$ and $\Omega'_t,\Psi'_t,\Theta'_s$ of $\SV(\varepsilon)$ by
\[
\begin{aligned}
&\Omega_t(L_m)=L_{m+t}, && \Omega_t(Y_n)=\tfrac12Y_{n+t}, && \Omega_t(M_m)=0,\\
&\Omega'_t(L_m)=2L_{m+t}, && \Omega'_t(Y_n)=\tfrac32Y_{n+t}, && \Omega'_t(M_m)=M_{m+t},\\
&\Psi_t(L_m)=mM_{m+t}, && \Psi_t(Y_n)=0, && \Psi_t(M_m)=0,\\
&\Psi'_t(L_m)=(m+t)M_{m+t}, && \Psi'_t(Y_n)=0, && \Psi'_t(M_m)=0,\\
&\Theta_s(L_m)=Y_{m+s}, && \Theta_s(Y_n)=M_{n+s}, && \Theta_s(M_m)=0,\\
&\Theta'_s(L_m)=\tfrac32Y_{m+s}, && \Theta'_s(Y_n)=2M_{n+s}, && \Theta'_s(M_m)=0.
\end{aligned}
\]
\end{definition}

Note that $\Omega_0=\Id-\frac12D_3$ and $\Psi_0=D_2$. The maps $\Omega_t$ and $\Psi_t$ are
even of degree $t$, and the map $\Theta_s$ is odd of degree $s$.

\begin{lemma}\label{lem:new}
For all $t\in\Z$ and $s\in\varepsilon+\Z$, the maps $\Omega_t$, $\Psi_t$ and $\Theta_s$ are
quasi-derivations of $\SV(\varepsilon)$ with associated maps $\Omega'_t$, $\Psi'_t$ and
$\Theta'_s$, respectively. The maps $\Omega_t$, $\Theta_s$ and, for $t\ne0$, $\Psi_t$ are
not derivations.
\end{lemma}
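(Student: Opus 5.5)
The plan is a direct verification of identity \eqref{eq:qd} on pairs of basis elements. All maps involved are linear and the bracket is bilinear, so it suffices to check
\[
[D(x),y]+[x,D(y)]=D'([x,y])
\]
for $x,y$ running over $\{L_m,Y_n,M_m\}$. Both sides are antisymmetric in $(x,y)$, so only the six unordered types of pairs need to be checked: $(L,L)$, $(L,Y)$, $(L,M)$, $(Y,Y)$, $(Y,M)$ and $(M,M)$. For each of $\Omega_t,\Psi_t,\Theta_s$ I will evaluate the left-hand side with \eqref{eq:sv} and compare it with the associated map applied to the bracket.

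\textbf{The map $\Omega_t$.}
\begin{itemize}
\item For $(L_m,L_n)$ the left side is $(m+t-n)L_{m+n+t}+(m-n-t)L_{m+n+t}=2(m-n)L_{m+n+t}$, which equals $\Omega'_t((m-n)L_{m+n})$.
\item For $(L_m,Y_n)$ the left side is $\bigl(\tfrac{m+t}{2}-n\bigr)Y_{m+n+t}+\tfrac12\bigl(\tfrac m2-n-t\bigr)Y_{m+n+t}=\tfrac32\bigl(\tfrac m2-n\bigr)Y_{m+n+t}$, matching $\Omega'_t(Y_n)=\tfrac32 Y_{n+t}$.
\item For $(L_m,M_n)$ the left side is $-nM_{m+n+t}$, matching $\Omega'_t(M)=M$.
\item For $(Y_m,Y_n)$ the left side is $\tfrac12(m+t-n)+\tfrac12(m-n-t)=(m-n)$ times $M_{m+n+t}$.
\item The remaining pairs give zero on both sides.
\end{itemize}

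\textbf{The map $\Psi_t$.} Its image lies in $\Span{M_k}$, which is abelian and is killed by $Y$ and $M$. Hence only the $(L,L)$ pair is nonzero. There the left side is
\[
[mM_{m+t},L_n]+[L_m,nM_{n+t}]=\bigl(m(m+t)-n(n+t)\bigr)M_{m+n+t}=(m-n)(m+n+t)M_{m+n+t},
\]
which equals $\Psi'_t((m-n)L_{m+n})$.

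\textbf{The map $\Theta_s$.}
\begin{itemize}
\item For $(L_m,L_n)$ the left side is $\bigl(-(\tfrac n2-m-s)+(\tfrac m2-n-s)\bigr)Y_{m+n+s}=\tfrac32(m-n)Y_{m+n+s}$.
\item For $(L_m,Y_n)$ the left side is $(m+s-n)M+\bigl(-(n+s)\bigr)M=(m-2n)M_{m+n+s}$, which equals $2(\tfrac m2-n)M_{m+n+s}$.
\item For $(Y_m,Y_n)$, $(L_m,M_n)$ and the remaining pairs, both sides are $0$. This uses that $[M,\cdot]$ vanishes except against $L$ and that $\Theta'_s(M)=0$.
\end{itemize}

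The only care needed throughout is bookkeeping of signs under antisymmetry; no step is a real obstacle.

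\textbf{The maps are not derivations.} By Lemma \ref{lem:unique} (applicable since $\SV(\varepsilon)$ is perfect by Lemma \ref{lem:perfect}), a quasi-derivation is a derivation if and only if its associated map equals itself. So it is enough to exhibit a single element on which the associated map differs from the map:
\begin{itemize}
\item $\Omega'_t(L_0)=2L_t\ne L_t=\Omega_t(L_0)$;
\item $\Theta'_s(L_0)=\tfrac32Y_s\neq Y_s=\Theta_s(L_0)$;
\item for $\Psi_t$ with $t\ne0$, $\Psi'_t(L_0)=tM_t\ne0=\Psi_t(L_0)$.
\end{itemize}
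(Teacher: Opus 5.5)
Your proposal is correct and matches the paper's proof: both check \eqref{eq:qd} case by case on pairs of basis elements, then use the criterion $D'=D$ from Lemma \ref{lem:unique} to show that the maps are not derivations. The only differences are cosmetic: you take $L_0$ rather than $M_m$ as the witness that $\Omega'_t\neq\Omega_t$, and for $\Psi_t$ the vanishing of the right-hand side on the pairs other than $(L,L)$ tacitly uses $\Psi'_t(Y_n)=\Psi'_t(M_m)=0$, which the paper states explicitly.
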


\begin{proof}
It is enough to verify \eqref{eq:qd} on pairs of basis elements. For $\Omega_t$ we have
\begin{align*}
[\Omega_t(L_m),L_n]+[L_m,\Omega_t(L_n)]&=(m+t-n)L_{m+n+t}+(m-n-t)L_{m+n+t}\\
&=2(m-n)L_{m+n+t}=\Omega'_t\big((m-n)L_{m+n}\big),\\
[\Omega_t(L_m),Y_n]+[L_m,\Omega_t(Y_n)]&=\Big(\frac{m+t}2-n\Big)Y_{m+n+t}
+\frac12\Big(\frac m2-n-t\Big)Y_{m+n+t}\\
&=\frac32\Big(\frac m2-n\Big)Y_{m+n+t}=\Omega'_t\Big(\Big(\frac m2-n\Big)Y_{m+n}\Big),\\
[\Omega_t(L_m),M_n]+[L_m,\Omega_t(M_n)]&=-nM_{m+n+t}=\Omega'_t(-nM_{m+n}),\\
[\Omega_t(Y_m),Y_n]+[Y_m,\Omega_t(Y_n)]&=\frac12(m+t-n)M_{m+n+t}+\frac12(m-n-t)M_{m+n+t}\\
&=(m-n)M_{m+n+t}=\Omega'_t\big((m-n)M_{m+n}\big),
\end{align*}
and for the pairs $(Y_m,M_n)$ and $(M_m,M_n)$ both sides of \eqref{eq:qd} are zero. For
$\Psi_t$ the only nontrivial case is
\begin{align*}
[\Psi_t(L_m),L_n]+[L_m,\Psi_t(L_n)]&=m(m+t)M_{m+n+t}-n(n+t)M_{m+n+t}\\
&=(m-n)(m+n+t)M_{m+n+t}=\Psi'_t\big((m-n)L_{m+n}\big);
\end{align*}
for the pairs $(L_m,Y_n)$ and $(L_m,M_n)$ both sides are zero because
$\Psi'_t(Y_n)=\Psi'_t(M_n)=0$ and $[M_k,Y_n]=[M_k,M_n]=0$, and for all other pairs both
sides are obviously zero. For $\Theta_s$ we have
\begin{align*}
[\Theta_s(L_m),L_n]+[L_m,\Theta_s(L_n)]&=\Big(m+s-\frac n2\Big)Y_{m+n+s}
+\Big(\frac m2-n-s\Big)Y_{m+n+s}\\
&=\frac32(m-n)Y_{m+n+s}=\Theta'_s\big((m-n)L_{m+n}\big),\\
[\Theta_s(L_m),Y_n]+[L_m,\Theta_s(Y_n)]&=(m+s-n)M_{m+n+s}-(n+s)M_{m+n+s}\\
&=2\Big(\frac m2-n\Big)M_{m+n+s}=\Theta'_s\Big(\Big(\frac m2-n\Big)Y_{m+n}\Big),
\end{align*}
and for all other pairs of basis elements both sides of \eqref{eq:qd} are zero (we use
$\Theta'_s(M_k)=0$ and $[M_k,Y_n]=[M_k,M_n]=0$). By Lemmas \ref{lem:unique} and
\ref{lem:perfect}, a quasi-derivation $D$ of $\SV(\varepsilon)$ is a derivation if and only
if $D'=D$. Since \begin{center}$\Omega'_t(M_m)\neq\Omega_t(M_m)$, $\Theta'_s(L_m)\ne\Theta_s(L_m)$ and
$\Psi'_t(L_0)=tM_t\ne0=\Psi_t(L_0)$ for $t\neq0$,
\end{center} the last statement follows.
\end{proof}

\begin{proposition}\label{prop:even}
Let $D$ be an even quasi-derivation of degree $j\in\Z$ of $\SV(\varepsilon)$.
\begin{enumerate}[{\rm (1)}]
\item If $j\ne0$, then there are $u,v,\mu,\nu\in\C$ and a central map $\zeta$ with
$\zeta(x)=0$ for all basis elements $x\notin\{L_{-j},M_{-j}\}$ such that
\[
D=u\ad_{L_j}+v\ad_{M_j}+\mu\,\Omega_j+\nu\,\Psi_j+\zeta.
\]
\item If $j=0$, then there are $u,\lambda_1,\lambda_2,\lambda_3,\lambda\in\C$ and a central
map $\zeta$ with $\zeta(x)=0$ for all basis elements $x\notin\{L_0,M_0\}$ such that
\[
D=u\ad_{L_0}+\lambda_1D_1+\lambda_2D_2+\lambda_3D_3+\lambda\Id+\zeta.
\]
\end{enumerate}
\end{proposition}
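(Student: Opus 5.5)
We write $D$ in components and reduce everything to the three functional equations of Subsection~\ref{section3}. Since $D$ is even of degree $j$ and $D'$ is homogeneous of the same degree (by Lemmas~\ref{lem:unique}, \ref{lem:perfect} and \ref{lem:grading}), we can write
\[
D(L_m)=a_mL_{m+j}+b_mM_{m+j},\qquad D(Y_n)=f_nY_{n+j},\qquad D(M_m)=e_mL_{m+j}+c_mM_{m+j},
\]
and similarly for $D'$ with primed coefficients. We then evaluate \eqref{eq:qd} on all pairs of basis elements and compare coefficients.

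\textbf{Step 1: the pairs $(L_m,L_n)$ and $(Y_m,M_n)$.} On $(L_m,L_n)$, the $L$-component gives exactly \eqref{eq:witt}. Lemma~\ref{lem:witt} then yields $a_m=\alpha+\beta m$ and $a'_m=2\alpha+\beta(m+j)$. The $M$-component, using $[M_k,L_n]=kM_{k+n}$, gives $(m+j)b_m-(n+j)b_n=(m-n)b'_{m+n}$. This is \eqref{eq:zero}, so Lemma~\ref{lem:zero} gives $b_m=\beta_1+\beta_2m$ for $m\ne-j$, with $b_{-j}$ arbitrary; this arbitrary value is the source of the central map $\zeta$ on $L_{-j}$. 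On $(Y_m,M_n)$, the right-hand side vanishes because $[Y,M]=0$, and the left-hand side is $e_n[Y_m,L_{n+j}]$. This forces $e_n=0$ for all $n$, except that for $n=-j$ the image $D(M_{-j})$ may have an $M_0$-part, which is central and is absorbed into $\zeta$.

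\textbf{Step 2: the pairs $(Y_m,Y_n)$.} These give $(m-n+j)f_m+(m-n-j)f_n=(m-n)c'_{m+n}$. The index set is $\varepsilon+\Z$, but the equation only involves the difference $m-n$. After the substitution $m=p+\varepsilon$, Lemma~\ref{lem:witt} applies verbatim and yields $f_n=\phi+\psi n$ and $c'_k=2\phi+\psi(k+j)$.

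\textbf{Step 3: the pairs $(L_m,Y_n)$ and $(L_m,M_n)$.} On $(L_m,Y_n)$ we get
\[
a_m\Big(\tfrac{m+j}2-n\Big)+f_n\Big(\tfrac m2-n-j\Big)=\Big(\tfrac m2-n\Big)f'_{m+n}.
\]
Substituting the linear forms of $a_m$ and $f_n$ and comparing polynomial coefficients in $m,n$ gives $\psi=\beta$ and expresses $f'$ in terms of $\alpha,\beta,\phi$. On $(L_m,M_n)$ we get $-na_m-(n+j)c_n=-nc'_{m+n}$. Using the formula for $c'$, this determines $c_n$ for $n\ne-j$, again leaving $c_{-j}$ free (the central part of $\zeta$ on $M_{-j}$), and it imposes one linear relation on $\alpha,\beta,\phi$ when $j\ne0$. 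The remaining pairs, such as $(M,M)$, should be automatic.

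\textbf{Step 4: matching with the claimed maps.} We compare the parameter space with the maps in the statement. From \eqref{eq:ad} and Definition~\ref{def:new}: $\ad_{L_j}$ has $a_m=j-m$, $f_n=\frac j2-n$, $c_m=-m$; $\Omega_j$ has $a=1$, $f=\frac12$, $c=0$; $\ad_{M_j}$ has $b_m=j$; $\Psi_j$ has $b_m=m$.
\begin{enumerate}[{\rm (1)}]
\item For $j\ne0$, the surviving parameters should be $(\alpha,\beta,\phi)$ subject to one relation, together with $(\beta_1,\beta_2)$. This matches $u,\mu$ together with $v,\nu$, since $(\ad_{M_j},\Psi_j)$ spans all $b_m=\beta_1+\beta_2m$ when $j\ne0$. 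Subtracting $u\ad_{L_j}+v\ad_{M_j}+\mu\Omega_j+\nu\Psi_j$ leaves only the central values at $L_{-j},M_{-j}$, which form $\zeta$.
\item For $j=0$, the $(L,M)$ relation degenerates, so one more free parameter survives. The family is then spanned by $\ad_{L_0}$, $\Id$, $D_3$ (equivalently $\Omega_0=\Id-\frac12D_3$) and $D_1,D_2$ (for $b$), together with the free central values at $L_0,M_0$. Note that $\ad_{M_0}=0$, which is why it does not appear in (2).
\end{enumerate}

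The main obstacle is Step~3: the coefficient comparison in the $(L_m,Y_n)$ and $(L_m,M_n)$ equations, and tracking exactly which index values ($n=-j$, and $m=0$ or $n=0$ where a factor vanishes) escape the constraints. This bookkeeping determines the precise support of $\zeta$ and the single extra degree of freedom at $j=0$. We would handle it by treating generic indices first and then checking the boundary indices $-j$ and $0$ separately.
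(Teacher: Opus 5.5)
Your proposal is correct and is essentially the paper's proof: the same coefficient relations are solved with Lemmas \ref{lem:witt} and \ref{lem:zero}, and the only difference is the order. You apply Lemma \ref{lem:witt} directly to the $(Y_m,Y_n)$ relation and then get $\psi=\beta$ and $j(2\phi-\alpha)=0$ from the $(L_m,M_n)$ relation (at $n=-j$, or from independence of $m$ when $j=0$), whereas the paper first reads off the $M$-coefficients of $D'(M_k)$ from the $(L_m,M_n)$ relation and then applies Lemma \ref{lem:witt} to the shifted $Y$-coefficients. One small correction of phrasing: the $(Y_m,M_n)$ relation gives $e_n=0$ for every $n$, including $n=-j$, and the free central value of $D(M_{-j})$ is your $c_{-j}$, which the $(L_m,M_n)$ relation leaves undetermined.
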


\begin{proof}
Since $D$ and $D'$ are even of degree $j$, we can write
\[
\begin{aligned}
&D(L_m)=a_mL_{m+j}+b_mM_{m+j}, && D(Y_n)=c_nY_{n+j}, && D(M_m)=d_mL_{m+j}+e_mM_{m+j},\\
&D'(L_m)=a'_mL_{m+j}+b'_mM_{m+j}, && D'(Y_n)=c'_nY_{n+j}, &&
D'(M_m)=d'_mL_{m+j}+e'_mM_{m+j},
\end{aligned}
\]
where all coefficients are complex numbers. Let us apply \eqref{eq:qd} to the pairs of
basis elements. For example, for the pair $(L_m,Y_n)$ we have
\[
[D(L_m),Y_n]+[L_m,D(Y_n)]=\Big(\Big(\frac{m+j}2-n\Big)a_m
+\Big(\frac m2-n-j\Big)c_n\Big)Y_{m+n+j}
\]
and $D'([L_m,Y_n])=(\frac m2-n)c'_{m+n}Y_{m+n+j}$. Considering in the same way the pairs
$(L_m,L_n)$, $(L_m,Y_n)$, $(L_m,M_n)$, $(Y_m,Y_n)$, $(Y_m,M_n)$ and $(M_m,M_n)$ and
comparing the coefficients at the basis elements, we obtain the following relations:
\begin{align}
&(m-n+j)a_m+(m-n-j)a_n=(m-n)a'_{m+n}, && m,n\in\mathbb{Z}, \label{e1}\\
&(m+j)b_m-(n+j)b_n=(m-n)b'_{m+n}, && m,n\in\mathbb{Z}, \label{e2}\\
&\Big(\frac{m+j}2-n\Big)a_m+\Big(\frac m2-n-j\Big)c_n=\Big(\frac m2-n\Big)c'_{m+n}, && m\in\mathbb{Z},\, n\in\varepsilon+\mathbb{Z}, \label{e3}\\
&(m-n-j)d_n+nd'_{m+n}=0, && m,n\in\mathbb{Z}, \label{e4}\\
&(n+j)e_n+na_m=ne'_{m+n}, && m,n\in\mathbb{Z}, \label{e5}\\
&(m-n)d'_{m+n}=0, \quad (m-n+j)c_m+(m-n-j)c_n=(m-n)e'_{m+n}, && m,n\in\varepsilon+\mathbb{Z}, \label{e6}\\
&\Big(m-\frac{n+j}2\Big)d_n=0, && m\in\varepsilon+\mathbb{Z},\, n\in\mathbb{Z}, \label{e7_1}\\
&md_n-nd_m=0, && m,n\in\mathbb{Z}. \label{e7_2}
\end{align}

\smallskip
\begin{enumerate}
    \item[\bf{Step 1.}] Taking two different values of $m\in\varepsilon+\Z$ in
\eqref{e7_1}, we get $d_n=0$ for all $n\in\Z$. Then \eqref{e4} with $n=1$ gives $d'_k=0$ for
all $k\in\Z$.

\smallskip
    \item[\bf{Step 2.}]  By Lemma \ref{lem:witt} and \eqref{e1}, there are $\alpha,\beta\in\C$ such
that
\[
a_m=\alpha+\beta m,\qquad a'_m=2\alpha+\beta(m+j)\quad\text{for all } m\in\Z.
\]
By Lemma \ref{lem:zero} and \eqref{e2}, there are $\beta_1,\beta_2\in\C$ such that
$b_m=\beta_1+\beta_2m$ for $m\ne-j$ and $b'_m=\beta_1+\beta_2(m+j)$ for all $m\in\Z$.

\smallskip
    \item[\bf{Step 3.}]  Setting $n=1$ in \eqref{e5}, we have $e'_{m+1}=\alpha+\beta m+(1+j)e_1$.
Hence, if we define $\rho\in\C$ by $2\rho=(1+j)(e_1-\beta)$, then
\begin{equation}\label{e8}
e'_k=\alpha+\beta(k+j)+2\rho\quad\text{for all } k\in\Z.
\end{equation}
Setting $n=-j$ in \eqref{e5} and using \eqref{e8}, we get $0=j(e'_{m-j}-a_m)=2j\rho$,
i.e.,
\begin{equation}\label{e9}
j\rho=0.
\end{equation}
Setting $m=0$ in \eqref{e5} and using \eqref{e8}, \eqref{e9}, we get\begin{center}
$(n+j)e_n=n\big(\beta(n+j)+2\rho\big)=(n+j)(\beta n+2\rho)-2j\rho
=(n+j)(\beta n+2\rho)$.
\end{center} Hence $e_n=\beta n+2\rho$ for all $n\ne-j$.

\smallskip
    \item[\bf{Step 4.}]  Let us set $\tilde c_n=c_n-\frac\alpha2-\beta n$ for $n\in\varepsilon+\Z$. A
direct computation shows that\begin{center}
$(m-n+j)(\frac\alpha2+\beta m)+(m-n-j)(\frac\alpha2+\beta n)=(m-n)(\alpha+\beta(m+n+j))$.
\end{center}
Hence, by the second relation of \eqref{e6} and \eqref{e8},
\[
(m-n+j)\tilde c_m+(m-n-j)\tilde c_n=2\rho(m-n)\quad\text{for all } m,n\in\varepsilon+\Z.
\]
Writing $m=p+\varepsilon$, $n=q+\varepsilon$ with $p,q\in\Z$ and applying Lemma
\ref{lem:witt} to the sequences $\{\tilde c_{p+\varepsilon}\}_{p\in\Z}$ and
$\{2\rho\}_{k\in\Z}$, we obtain $\alpha_0,\beta_0\in\C$ such that
$\tilde c_{p+\varepsilon}=\alpha_0+\beta_0p$ and $2\rho=2\alpha_0+\beta_0(k+j)$ for all
$p,k\in\Z$. Therefore $\beta_0=0$ and $\alpha_0=\rho$, that is,
\[
c_n=\frac\alpha2+\beta n+\rho\quad\text{for all } n\in\varepsilon+\Z.
\]
Now substituting $a_m$ and $c_n$ in \eqref{e3}, a direct computation gives
\[
\Big(n-\frac m2\Big)\Big(c'_{m+n}-\frac32\alpha-\beta(m+n+j)-\rho\Big)=j\rho=0.
\]
For every $r\in\varepsilon+\Z$ we can choose $n\in\varepsilon+\Z$ with $r\ne3n$ and put
$m=r-n$, so that $\frac m2-n=\frac{r-3n}2\neq0$. Hence
$c'_r=\frac32\alpha+\beta(r+j)+\rho$ for all $r\in\varepsilon+\Z$.

\smallskip
    \item[\bf{Step 5.}]  Now we compare $D$ with known quasi-derivations, using \eqref{eq:ad}.

Let $j\ne0$. Then $\rho=0$ by \eqref{e9}. Consider the quasi-derivation
\[
\widehat D=(\alpha+\beta j)\Omega_j-\beta\ad_{L_j}+\frac{\beta_1}{j}\ad_{M_j}+\beta_2\Psi_j.
\]
Then
\begin{center}
$\widehat D(L_m)=(\alpha+\beta m)L_{m+j}+(\beta_1+\beta_2m)M_{m+j},$\\
$\widehat D(Y_n)=\Big(\frac\alpha2+\beta n\Big)Y_{n+j},$\quad
$\widehat D(M_m)=\beta mM_{m+j}.$
\end{center}
By Steps 1--4, the map $\zeta=D-\widehat D$ vanishes on all basis elements except $L_{-j}$
and $M_{-j}$, and $\zeta(L_{-j}),\zeta(M_{-j})\in\Span{M_0}$. Hence $\zeta$ is a central map
and the first statement holds with $u=-\beta$, $v=\frac{\beta_1}{j}$, $\mu=\alpha+\beta j$,
$\nu=\beta_2$.

Let $j=0$. Consider the quasi-derivation
$\widehat D=\alpha\,\Omega_0-\beta\ad_{L_0}+\beta_1D_1+\beta_2D_2+\rho D_3$. Then
\begin{center}$\widehat D(L_m)=(\alpha+\beta m)L_m+(\beta_1+\beta_2m)M_m$,\\ 
$\widehat D(Y_n)=(\frac\alpha2+\beta n+\rho)Y_n$ and
$\widehat D(M_m)=(\beta m+2\rho)M_m$.
\end{center} As above, $\zeta=D-\widehat D$ is a central map
vanishing on all basis elements except $L_0$ and $M_0$. Since $\Omega_0=\Id-\frac12D_3$, we
get $D=-\beta\ad_{L_0}+\beta_1D_1+\beta_2D_2+(\rho-\frac\alpha2)D_3+\alpha\Id+\zeta$.
\end{enumerate}
\end{proof}

\begin{proposition}\label{prop:odd}
Let $D$ be an odd quasi-derivation of degree $s\in\varepsilon+\Z$ of $\SV(\varepsilon)$.
Then there are $w,\gamma\in\C$ and a central map $\zeta$ with $\zeta(x)=0$ for all basis
elements $x\ne Y_{-s}$ such that
\[
D=w\ad_{Y_s}+\gamma\,\Theta_s+\zeta.
\]
\end{proposition}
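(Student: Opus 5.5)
We write $D$ in coordinates and reduce everything to Lemma \ref{lem:half} and a few linear relations. Since $D$ and $D'$ are odd of degree $s$, the grading forces
\[
D(L_m)=f_mY_{m+s},\qquad D(Y_n)=g_nL_{n+s}+h_nM_{n+s},\qquad D(M_m)=k_mY_{m+s},
\]
and the same form for $D'$ with primed coefficients. Note that here $m\in\Z$, $n\in\varepsilon+\Z$ and $n+s\in\Z$. Applying \eqref{eq:qd} to the pairs $(L_m,L_n)$, $(L_m,Y_n)$, $(L_m,M_n)$, $(Y_m,Y_n)$, $(Y_m,M_n)$ and $(M_m,M_n)$ gives the following relations:
\begin{align*}
&\Big(m+s-\tfrac n2\Big)f_m+\Big(\tfrac m2-n-s\Big)f_n=(m-n)f'_{m+n},\\
&(m-n-s)g_n=\Big(\tfrac m2-n\Big)g'_{m+n},\qquad (m+s-n)f_m-(n+s)h_n=\Big(\tfrac m2-n\Big)h'_{m+n},\\
&\Big(\tfrac m2-n-s\Big)k_n=-n\,k'_{m+n},\qquad
\Big(\tfrac{m+s}2-n\Big)g_m-\Big(\tfrac{n+s}2-m\Big)g_n=(m-n)k'_{m+n},\\
&(m-n-s)k_n=0 .
\end{align*}

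\textbf{Step 1 (the $M$-part).} The last relation holds for all $m\in\varepsilon+\Z$. Taking two values of $m$ gives $k_n=0$ for all $n$. Then the $(L_m,M_n)$ relation with $n\neq0$ gives $k'=0$.

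\textbf{Step 2 (the $L$-part).} By Lemma \ref{lem:half}, the first relation gives $f_m=\phi+\psi m$ and $f'_m=\frac32\phi+\psi(m+s)$.

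\textbf{Step 3 (the coefficients $g_n$).} We show that all $g_n$ vanish, using the $L$-component of $(L_m,Y_n)$:
\[
(m-n-s)g_n=\Big(\tfrac m2-n\Big)g'_{m+n}.
\]
Fix $r\in\Z$ and write $m=r-n$. The relation becomes $(r-2n-s)g_n=\frac{r-3n}{2}\,g'_r$ for all $n\in\varepsilon+\Z$. Hence $g_n$ is determined by $g'_r$ whenever $r-2n-s\neq0$, which fixes the shape of $g$. Comparing two different values of $r$ for the same $n$ should force $g'_r=0$ for all $r$ and then $g_n=0$. The possible exceptional values are $n=(r-s)/2$, where the left coefficient vanishes. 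We handle these by using the $(Y_m,Y_n)$ relation, now with $k'=0$, which reads
\[
\Big(\tfrac{m+s}2-n\Big)g_m=\Big(\tfrac{n+s}2-m\Big)g_n .
\]
This is the step I expect to be the main obstacle. It requires a careful case check on which integer and half-integer values make the coefficients vanish; the worry is a nonzero $g$ supported on a single $n$, such as $n=-s$. I would first try to kill such a $g_n$ with the $(Y_m,Y_n)$ relation, choosing $m$ so that the coefficient of $g_n$ is nonzero.

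\textbf{Step 4 (the coefficients $h_n$).} Substitute $f_m$ into the $M$-component of $(L_m,Y_n)$:
\[
(m+s-n)(\phi+\psi m)-(n+s)h_n=\Big(\tfrac m2-n\Big)h'_{m+n}.
\]
Putting $m=0$ expresses $h'_n$ through $h_n$. Eliminating $h'$ by varying $m$ for fixed $m+n$, I expect to get $h_n=2\phi+\psi\cdot(\text{linear in } n)$ for all $n\neq-s$, with $h'$ fully determined. Concretely, it should match $\gamma+w(s-n)$ with $w=-2\psi$ and $\gamma=\phi-ws$.

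\textbf{Step 5 (comparison).} Put $\widehat D=w\ad_{Y_s}+\gamma\Theta_s$ with $w=-2\psi$ and $\gamma=\phi+2\psi s$. By \eqref{eq:ad}, $\widehat D(L_m)=(\gamma+ws-\frac w2m)Y_{m+s}=f_mY_{m+s}$, $\widehat D(Y_n)=(\gamma+w(s-n))M_{n+s}$ and $\widehat D(M_m)=0$. Hence $\zeta=D-\widehat D$ vanishes on every basis element except $Y_{-s}$, and $\zeta(Y_{-s})=h_{-s}M_0-\widehat D(Y_{-s})\in\Span{M_0}$, so $\zeta$ is central. If the identification of constants in Step 4 comes out with a different normalization, I would simply solve the two linear equations for $w$ and $\gamma$ from $f_m$.
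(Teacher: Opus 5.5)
\textbf{Gap: the $(Y_m,M_n)$ relation is wrong.} Your relation for the pair $(Y_m,M_n)$ drops the contribution of the $L$-component of $D(Y_m)$. Since $[g_mL_{m+s},M_n]=-n\,g_mM_{m+n+s}$ and $[Y_m,k_nY_{n+s}]=(m-n-s)k_nM_{m+n+s}$, the correct relation is
\[
(m-n-s)k_n=n\,g_m\qquad(m\in\varepsilon+\Z,\ n\in\Z).
\]
This has three consequences. Step 1 does not follow, because varying $m$ no longer isolates $k_n$. The equality $k'=0$ is not established. Your plan for Step 3 invokes $k'=0$ for the exceptional indices, so it becomes circular.

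This is a real gap, not a typo. The vanishing of all $g_n$ is the crux of the proposition: any $g_n\neq0$ puts an $L$-component into $(D-\widehat D)(Y_n)$, which then cannot be central. In your proposal Step 3 is only an announced strategy. Moreover, comparing two values of $r$ for a fixed $n$ with $r\neq 3n$ only expresses $g'_r$ through $g_n$.

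\textbf{The paper's repair.} The corrected relation is exactly what settles this. Putting $n=0$ gives $(m-s)k_0=0$, so $k_0=0$. Putting $n=1,2$ gives $g_m=(m-1-s)k_1$ and $2g_m=(m-2-s)k_2$ for all $m\in\varepsilon+\Z$. Comparing these linear polynomials in $m$ yields $k_2=2k_1$ and $2(1+s)k_1=2(2+s)k_1$, so $k_1=0$. Thus $g\equiv0$, then $k\equiv0$, and $g'=k'=0$ follow.

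\textbf{An alternative keeping your order.} You can instead do Step 3 first, using only the $L$-component of $(L_m,Y_n)$. Putting $m=2n\in\Z$ (i.e.\ $r=3n$) gives $(n-s)g_n=0$, so $g_n=0$ for $n\neq s$. Then $g'_r=0$ for every $r$: take $n\neq s$ with $r\neq3n$. Then $(m-2s)g_s=0$ for all $m\in\Z$, so $g_s=0$. Here the only exceptional index is $n=s$. Only after this does the corrected $(Y_m,M_n)$ relation give $k\equiv0$.

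\textbf{Step 4.} This step is also only sketched. It closes at once if you rewrite the $M$-component of $(L_m,Y_n)$ as
\[
(n+s)(h_n-\phi-2\psi n)=\Big(n-\frac m2\Big)\big(h'_{m+n}-2\phi-2\psi(m+n+s)\big)
\]
and again put $m=2n$. This gives $h_n=\phi+2\psi n$ for $n\neq-s$; the constant is $\phi$, not $2\phi$. With these repairs your Step 5 coincides with the paper's conclusion.
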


\begin{proof}
Since $D$ and $D'$ are odd of degree $s$, we can write
\[
\begin{aligned}
&D(L_m)=f_mY_{m+s}, && D(M_m)=g_mY_{m+s}, && D(Y_n)=h_nL_{n+s}+k_nM_{n+s},\\
&D'(L_m)=f'_mY_{m+s}, && D'(M_m)=g'_mY_{m+s}, && D'(Y_n)=h'_nL_{n+s}+k'_nM_{n+s}.
\end{aligned}
\]
Applying \eqref{eq:qd} to the pairs $(L_m,L_n)$, $(L_m,Y_n)$, $(L_m,M_n)$, $(Y_m,Y_n)$ and
$(Y_m,M_n)$ and comparing the coefficients at the basis elements, we obtain the following
relations (the pair $(M_m,M_n)$ gives no relations):
\begin{align}
&\Big(m+s-\frac n2\Big)f_m+\Big(\frac m2-n-s\Big)f_n=(m-n)f'_{m+n}, && m,n\in\mathbb{Z}, \label{o1}\\
&(m-n-s)h_n=\Big(\frac m2-n\Big)h'_{m+n}, && m\in\mathbb{Z},\ n\in\varepsilon+\mathbb{Z}, \label{o2}\\
&(m+s-n)f_m-(n+s)k_n=\Big(\frac m2-n\Big)k'_{m+n}, && m\in\mathbb{Z},\ n\in\varepsilon+\mathbb{Z}, \label{o3}\\
&\Big(\frac m2-n-s\Big)g_n=-ng'_{m+n}, && m,n\in\mathbb{Z}, \label{o4}\\
&\Big(\frac{m+s}2-n\Big)h_m+\Big(m-\frac{n+s}2\Big)h_n=(m-n)g'_{m+n}, && m,n\in\varepsilon+\mathbb{Z}, \label{o5}\\
&(m-n-s)g_n-nh_m=0, && m\in\varepsilon+\mathbb{Z},\ n\in\mathbb{Z}. \label{o6}
\end{align}

\smallskip
\begin{enumerate}
    \item[\bf{Step 1.}] Setting $n=0$ in \eqref{o6}, we have $(m-s)g_0=0$ for all
$m\in\varepsilon+\Z$, so $g_0=0$. Setting $n=1$ and $n=2$ in \eqref{o6}, we have
$h_m=(m-1-s)g_1$ and $2h_m=(m-2-s)g_2$. Hence 
\begin{center}$2(m-1-s)g_1=(m-2-s)g_2$ for all
$m\in\varepsilon+\Z$.\end{center} Comparing the coefficients at $m$ and the constant terms of these
polynomials of degree at most one in $m$ (which coincide at infinitely many points), we get
$2g_1=g_2$ and $2(1+s)g_1=(2+s)g_2=2(2+s)g_1$, so $g_1=0$. Therefore $h_m=0$ for all
$m\in\varepsilon+\Z$. Now \eqref{o6} gives $(m-n-s)g_n=0$ for all $m\in\varepsilon+\Z$,
whence $g_n=0$ for all $n\in\Z$.

\smallskip
\item[\bf{Step 2.}] Since $h_n=0$, the relation \eqref{o2} gives $(\frac m2-n)h'_{m+n}=0$. For
every $r\in\varepsilon+\Z$ we choose $n\in\varepsilon+\Z$ with $r\ne3n$ and put $m=r-n$; then
$\frac m2-n\ne0$ and $h'_r=0$. Similarly, \eqref{o5} gives $(m-n)g'_{m+n}=0$ for all
$m,n\in\varepsilon+\Z$, and since every $k\in\Z$ can be written as $k=m+n$ with $m\ne n$,
$m,n\in\varepsilon+\Z$, we get $g'_k=0$ for all $k\in\Z$. Now \eqref{o4} holds trivially.

\smallskip
\item[\bf{Step 3.}] By Lemma \ref{lem:half} and \eqref{o1}, there are $\phi,\psi\in\C$ such that
\[
f_m=\phi+\psi m,\qquad f'_m=\frac32\phi+\psi(m+s)\quad\text{for all } m\in\Z.
\]

\smallskip
\item[\bf{Step 4.}] A direct computation shows that\begin{center}
$(m+s-n)(\phi+\psi m)-(n+s)(\phi+2\psi n)=(\frac m2-n)\big(2\phi+2\psi(m+n+s)\big)$.\end{center}
Hence \eqref{o3} can be written as
\[
(n+s)\big(k_n-\phi-2\psi n\big)=\Big(n-\frac m2\Big)\big(k'_{m+n}-2\phi-2\psi(m+n+s)\big).
\]
Taking $m=2n\in\Z$, we get $(n+s)(k_n-\phi-2\psi n)=0$, i.e., $k_n=\phi+2\psi n$ for all
$n\ne-s$. Then the right hand side vanishes for all $m\in\Z$, $n\in\varepsilon+\Z$, and, as
in Step 2, we obtain $k'_r=2\phi+2\psi(r+s)$ for all $r\in\varepsilon+\Z$.

\smallskip
\item[\bf{Step 5.}] Let $\gamma=\phi+2\psi s$ and $w=-2\psi$. By \eqref{eq:ad},
\[
(w\ad_{Y_s}+\gamma\Theta_s)(L_m)=\Big(w\Big(s-\frac m2\Big)+\gamma\Big)Y_{m+s}
=(\phi+\psi m)Y_{m+s},
\]
\[
(w\ad_{Y_s}+\gamma\Theta_s)(Y_n)=\big(w(s-n)+\gamma\big)M_{n+s}=(\phi+2\psi n)M_{n+s},
\]
and $(w\ad_{Y_s}+\gamma\Theta_s)(M_m)=0$. By Steps 1--4, the map
$\zeta=D-w\ad_{Y_s}-\gamma\Theta_s$ vanishes on all basis elements except $Y_{-s}$, and
$\zeta(Y_{-s})=(k_{-s}-\phi+2\psi s)M_0\in\Span{M_0}$. Hence $\zeta$ is a central map.
\end{enumerate}
\end{proof}

Now we are ready to formulate and prove the main result of the paper.

\begin{theorem}\label{thm:main}
A linear map $D$ of $\SV(\varepsilon)$ is a quasi-derivation if and only if there are a
derivation $d\in\Der\big(\SV(\varepsilon)\big)$, a complex number $\lambda$, three finitely supported families of
complex numbers $\{\alpha_t\}_{t\in\Z\setminus\{0\}}$, $\{\beta_t\}_{t\in\Z\setminus\{0\}}$,
$\{\gamma_s\}_{s\in\varepsilon+\Z}$ and a central map
$\zeta:\SV(\varepsilon)\to\Span{M_0}$ such that
\begin{equation}\label{eq:main}
D=d+\lambda\Id+\sum\limits_{t\ne0}\alpha_t\Omega_t+\sum\limits_{t\ne0}\beta_t\Psi_t
+\sum\limits_{s\in\varepsilon+\Z}\gamma_s\Theta_s+\zeta.
\end{equation}
Explicitly, this means that
\[
\begin{aligned}
D(L_m)&=d(L_m)+\lambda L_m+\sum\limits_{t\ne0}\alpha_tL_{m+t}+\sum\limits_{t\ne0}\beta_t\,m\,M_{m+t}
+\sum\limits_{s\in\varepsilon+\Z}\gamma_sY_{m+s}+\zeta(L_m),\\
D(Y_n)&=d(Y_n)+\lambda Y_n+\frac12\sum\limits_{t\ne0}\alpha_tY_{n+t}
+\sum\limits_{s\in\varepsilon+\Z}\gamma_sM_{n+s}+\zeta(Y_n),\\
D(M_m)&=d(M_m)+\lambda M_m+\zeta(M_m).
\end{aligned}
\]
The decomposition \eqref{eq:main} is unique, and the map associated with $D$ is
\[
D'=d+2\lambda\Id+\sum\limits_{t\ne0}\alpha_t\Omega'_t+\sum\limits_{t\ne0}\beta_t\Psi'_t
+\sum\limits_{s\in\varepsilon+\Z}\gamma_s\Theta'_s.
\]
\end{theorem}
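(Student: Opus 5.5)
The plan is to assemble the theorem from the homogeneous descriptions in Propositions \ref{prop:even} and \ref{prop:odd}, with the main care going into finiteness of the sum.

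The ``if'' direction is immediate. Derivations are quasi-derivations with associated map equal to themselves. The identity has associated map $2\Id$. Central maps have associated map $0$ (Lemma \ref{lem:unique}). The maps $\Omega_t,\Psi_t,\Theta_s$ have associated maps $\Omega'_t,\Psi'_t,\Theta'_s$ by Lemma \ref{lem:new}. Since \eqref{eq:qd} is linear in the pair $(D,D')$, the displayed formula for $D'$ follows, and uniqueness of $D'$ comes from Lemmas \ref{lem:unique} and \ref{lem:perfect}.

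For the ``only if'' direction, decompose $D=\sum_g D_g$ into homogeneous components. Each component is a quasi-derivation by Lemma \ref{lem:grading}. Propositions \ref{prop:even} and \ref{prop:odd} then give
\[
D_{(j,\bar0)}=u_j\ad_{L_j}+v_j\ad_{M_j}+\mu_j\Omega_j+\nu_j\Psi_j+\zeta_j\quad (j\neq0),
\]
a similar expression with $D_1,D_2,D_3,\Id$ for $j=0$, and
\[
D_{(s,\bar1)}=w_s\ad_{Y_s}+\gamma_s\Theta_s+\zeta_s .
\]
The sums $\sum_j u_j\ad_{L_j}+\sum_j v_j\ad_{M_j}+\sum_s w_s\ad_{Y_s}$ together with the $D_i$-terms form a derivation $d$, since an arbitrary (locally finite) sum of derivations is a derivation. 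The sum of all $\zeta$'s is a well-defined central map, because each basis vector is moved by only one of them (either $\zeta_{-\deg}$ on $L_k,M_k$, or $\zeta_{-s}$ on $Y_{-s}$).

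The real obstacle is showing that only finitely many $\mu_j,\nu_j,\gamma_s$ are nonzero, since the theorem claims finitely supported families.

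\emph{Step A (the $\mu_j$ and $\gamma_s$).} Apply $D$ to the single element $M_0$. It is central, so $D(M_0)=\sum_j d_j\text{-part}$. By \eqref{eq:ad} every inner derivation and $D_i$ kills $M_0$, and $\Omega_t,\Psi_t,\Theta_s$ also vanish on $M_0$, so this gives nothing. Instead evaluate on $L_0$ and $Y_\varepsilon$, and compare coefficients componentwise:
\begin{itemize}
\item $D(L_0)$ has $L_j$-coefficient $-ju_j+\mu_j$ and $Y_s$-coefficient $w_s s+\gamma_s$;
\item $D(M_1)$ has $M_{1+j}$-coefficient $-u_j$ (from $\ad_{L_j}$).
\end{itemize}
Finiteness of $D(M_1)$ gives finitely many $u_j\neq0$, hence finitely many $\mu_j\neq0$ using $D(L_0)$. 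Similarly, $D(Y_n)$ has $M_{n+s}$-coefficient $w_s(s-n)+\gamma_s$; using two values of $n$ gives finitely many $w_s$ and then $\gamma_s$.

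\emph{Step B (the $\nu_j$).} For $\nu_j$, look at the $M_{m+j}$-coefficient of $D(L_m)$, which is $v_jj+\nu_jm$ up to the central correction. Taking $m=0$ and $m=1$ (avoiding $m=-j$), finiteness forces $v_j j$ and $v_jj+\nu_j$ to vanish for almost all $j$, hence $\nu_j=0$ for almost all $j$.

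Collecting terms (and rewriting $\Omega_0=\Id-\frac12D_3$, $\Psi_0=D_2$) yields \eqref{eq:main}. For uniqueness of the decomposition, suppose a combination $d+\lambda\Id+\sum\alpha_t\Omega_t+\sum\beta_t\Psi_t+\sum\gamma_s\Theta_s+\zeta=0$. Its associated map must be $0$. Subtracting the map itself gives $\lambda\Id+\sum\alpha_t(\Omega'_t-\Omega_t)+\sum\beta_t(\Psi'_t-\Psi_t)+\sum\gamma_s(\Theta'_s-\Theta_s)-\zeta=0$. Evaluating on $M_m$, $L_0$ and $L_m$ degree by degree forces $\lambda,\alpha_t,\gamma_s,\beta_t$ to vanish; for $\beta_t$ use $(\Psi'_t-\Psi_t)(L_m)=tM_{m+t}$. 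Then $\zeta=-d$ is a central derivation, and applying it to $[x,y]$ with perfectness shows it is $0$, so $d=0$.
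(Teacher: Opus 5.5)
Your proof is correct and follows essentially the paper's route: you decompose $D$ via Propositions \ref{prop:even} and \ref{prop:odd}, prove finiteness of the coefficient families from a few values of $D$, and get uniqueness by subtracting $D$ from its associated map. The differences from the paper are cosmetic: you test on $D(M_1)$, $D(L_0)$, $D(L_1)$, $D(Y_n)$ where the paper uses $D(L_0)$, $D(L_1)$, $D(L_2)$, and you finish uniqueness with ``a central derivation of a perfect algebra is zero'' instead of the associated-map equation. Minor slips to fix: the $L_j$-coefficient of $D(L_0)$ is $ju_j+\mu_j$, not $-ju_j+\mu_j$; your discarded aside on $M_0$ is wrong, since $D_3(M_0)=2M_0$ and $\Id(M_0)=M_0$; and the finiteness of the $u_j,v_j,w_s$ is what makes $d$ well defined, so it should precede the claim that $d$ is a derivation.
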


\begin{proof}
If $D$ is of the form \eqref{eq:main}, then $D$ is a quasi-derivation with the indicated
associated map by Lemmas \ref{lem:unique} and \ref{lem:new} (all sums are finite).

Conversely, let $D$ be a quasi-derivation. By Proposition \ref{prop:even}, for every
$j\in\Z\setminus\{0\}$ there are $u_j,v_j,\mu_j,\nu_j\in\C$ and a central map $\zeta_j$ such
that the even component of degree $j$ of $D$ is equal to
$u_j\ad_{L_j}+v_j\ad_{M_j}+\mu_j\Omega_j+\nu_j\Psi_j+\zeta_j$, and there are
$u_0,\lambda_1,\lambda_2,\lambda_3,\lambda\in\C$ and a central map $\zeta_0$ such that the
even component of degree $0$ of $D$ is equal to
\begin{center}$u_0\ad_{L_0}+\lambda_1D_1+\lambda_2D_2+\lambda_3D_3+\lambda\Id+\zeta_0$.
\end{center} By Proposition
\ref{prop:odd}, for every $s\in\varepsilon+\Z$ there are $w_s,\gamma_s\in\C$ and a central
map $\zeta'_s$ such that the odd component of degree $s$ of $D$ is equal to
$w_s\ad_{Y_s}+\gamma_s\Theta_s+\zeta'_s$.

Let us show that only finitely many of the numbers $u_j,v_j,\mu_j,\nu_j,w_s,\gamma_s$ are
nonzero. By \eqref{eq:ad}, for $j\neq0$ and $m+j\ne0$ the coefficients of $D(L_m)$ at
$L_{m+j}$ and at $M_{m+j}$ are equal to
\[
u_j(j-m)+\mu_j\qquad\text{and}\qquad v_jj+\nu_jm,
\]
respectively, and the coefficient of $D(L_m)$ at $Y_{m+s}$ is equal to
$w_s(s-\frac m2)+\gamma_s$. Since $D(L_0)$, $D(L_1)$ and $D(L_2)$ are finite linear
combinations of basis elements, for all but finitely many $j$ and $s$ we have\begin{center}
$u_jj+\mu_j=u_j(j-1)+\mu_j=0$, $v_jj+\nu_j=v_jj+2\nu_j=0$ and
$w_ss+\gamma_s=w_s(s-1)+\gamma_s=0$\end{center}  (we use $m=0,1$ for the first and the last pair of equalities and $m=1,2$ for the second one), which gives $u_j=\mu_j=v_j=\nu_j=0$ and
$w_s=\gamma_s=0$ for all but finitely many $j$ and $s$.

Consequently, $x=\sum\limits_{j}(u_jL_j+v_jM_j)+\sum\limits_sw_sY_s$ (with $v_0=0$) is an element of
$\SV(\varepsilon)$, and $d=\ad_x+\lambda_1D_1+\lambda_2D_2+\lambda_3D_3$ is a derivation. The
map
\[
\zeta=D-d-\lambda\Id-\sum\limits_{t\ne0}\mu_t\Omega_t-\sum\limits_{t\ne0}\nu_t\Psi_t-\sum\limits_s\gamma_s\Theta_s
\]
is linear, and for every basis element $y$ we have
$\zeta(y)=\sum\limits_{j}\zeta_j(y)+\sum\limits_s\zeta'_s(y)\in\Span{M_0}$. Hence $\zeta$ is a central map
and \eqref{eq:main} holds with $\alpha_t=\mu_t$ and $\beta_t=\nu_t$.

Let us prove the uniqueness. Assume that
$$d+\lambda\Id+\sum\limits_{t\ne0}\alpha_t\Omega_t+\sum\limits_{t\ne0}\beta_t\Psi_t
+\sum\limits_s\gamma_s\Theta_s+\zeta=0.$$

By Lemmas \ref{lem:unique} and \ref{lem:perfect}, the map
associated with the left hand side is zero, i.e.,
\begin{center}$d+2\lambda\Id+\sum\limits_{t\ne0}\alpha_t\Omega'_t+\sum\limits_{t\ne0}\beta_t\Psi'_t
+\sum\limits_s\gamma_s\Theta'_s=0$.
\end{center}Subtracting the first equality from the second one, we get
\[
\lambda\Id+\sum\limits_{t\ne0}\alpha_t(\Omega'_t-\Omega_t)+\sum\limits_{t\ne0}\beta_t(\Psi'_t-\Psi_t)
+\sum\limits_s\gamma_s(\Theta'_s-\Theta_s)-\zeta=0.
\]
Applying this map to $M_n$ with $n\ne0$ and $n\ne -t$ for all $t$ with $\alpha_t\neq0$, we
obtain $\lambda M_n+\sum\limits_{t\ne0}\alpha_tM_{n+t}\in\Span{M_0}$, whence $\lambda=0$ and
$\alpha_t=0$ for all $t$. Applying it to $L_m$, we obtain
$\sum\limits_{t\ne0}\beta_t\,t\,M_{m+t}+\frac12\sum\limits_s\gamma_sY_{m+s}\in\Span{M_0}$ for all $m\in\Z$,
whence $\beta_t=0$ and $\gamma_s=0$ for all $t$ and $s$. Then $d+\zeta=0$ and, by the second
equality, $d=0$; hence $\zeta=0$.
\end{proof}

\begin{corollary}\label{cor:decomp}
Let $\mathcal Q\big(\SV(\varepsilon)\big)$ be the space of all finite linear combinations of the
maps $\Omega_t,\Psi_t$ $(t\in\Z\setminus\{0\})$ and $\Theta_s$ $(s\in\varepsilon+\Z)$. Then
\[
\QDer\big(\SV(\varepsilon)\big)=\Der\big(\SV(\varepsilon)\big)\oplus\Delta_{\frac{1}{2}}\big(\SV(\varepsilon)\big)\oplus
\mathcal Q\big(\SV(\varepsilon)\big)\oplus\Hom\big(\SV(\varepsilon),Z(\SV(\varepsilon))\big).
\]
In particular,
\begin{center}$\QDer\big(\SV(\varepsilon)\big)$ is strictly larger than
$\Der\big(\SV(\varepsilon)\big)+
\Delta_{\frac{1}{2}}\big(\SV(\varepsilon)\big)+\Hom\big(\SV(\varepsilon),Z(\SV(\varepsilon))\big)$.\end{center}
\end{corollary}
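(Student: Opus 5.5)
This is essentially a repackaging of Theorem \ref{thm:main}, together with Theorem \ref{thm:half}, which gives $\Delta_{\frac12}(\SV(\varepsilon))=\Span{\Id}$.

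\emph{Sum equals $\QDer$.} Each of the four summands consists of quasi-derivations:
\begin{itemize}
\item derivations trivially;
\item $\Span{\Id}$, with associated map $2\Id$;
\item $\mathcal Q(\SV(\varepsilon))$ by Lemma \ref{lem:new}, since finite combinations of quasi-derivations are quasi-derivations;
\item central maps by Lemma \ref{lem:unique}, with associated map $0$.
\end{itemize}
Since $Z(\SV(\varepsilon))=\Span{M_0}$ by Lemma \ref{lem:perfect}, the last summand is exactly the space of maps $\zeta$ appearing in \eqref{eq:main}. The existence part of Theorem \ref{thm:main} then says that every quasi-derivation lies in the sum.

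\emph{Directness.} Suppose
\[
d+\lambda\Id+q+\zeta=0
\]
with $d$ a derivation, $q\in\mathcal Q(\SV(\varepsilon))$ and $\zeta$ central. Write $q$ as a finite combination $\sum\alpha_t\Omega_t+\sum\beta_t\Psi_t+\sum\gamma_s\Theta_s$. The uniqueness statement of Theorem \ref{thm:main} forces all of $d,\lambda,\alpha_t,\beta_t,\gamma_s,\zeta$ to vanish. Hence $q=0$ as well, and the sum is direct.

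\emph{Strictness.} Take $\Theta_s$ (or $\Omega_t$ with $t\ne0$). It is a nonzero element of $\mathcal Q(\SV(\varepsilon))$, written with a single nonzero coefficient. By the directness just proved, it does not lie in $\Der+\Delta_{\frac12}+\Hom(\SV(\varepsilon),Z(\SV(\varepsilon)))$.

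The only point needing care is that the maps $\Omega_t,\Psi_t,\Theta_s$ themselves are linearly independent, so that "finite linear combination" gives well-defined coefficients. This follows from the same uniqueness argument: apply the difference $D'-D$ to $M_n$ and to $L_m$ exactly as in the proof of Theorem \ref{thm:main}. It also follows directly, since the maps have distinct degrees and types. So I expect no real obstacle; the corollary is bookkeeping on top of Theorem \ref{thm:main}.
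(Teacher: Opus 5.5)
Your proposal is correct and is essentially the paper's argument. The paper's proof simply cites Theorem~\ref{thm:main}, Theorem~\ref{thm:half} (which gives $\Delta_{\frac12}=\Span{\Id}$), Lemma~\ref{lem:perfect} (which gives $Z=\Span{M_0}$) and the uniqueness of \eqref{eq:main}; you have written out exactly that bookkeeping, and your closing remark on the linear independence of $\Omega_t,\Psi_t,\Theta_s$ is not even needed, since uniqueness in Theorem~\ref{thm:main} already forces $q=0$.
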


\begin{proof}
This follows from Theorems \ref{thm:main} and \ref{thm:half}, Lemma \ref{lem:perfect} and
the uniqueness of the decomposition \eqref{eq:main}.
\end{proof}

\subsection{Corollaries}\label{section5}

\begin{corollary}\label{cor:delta}
Let $\varepsilon\in\{0,\frac12\}$ and $\delta\in\C$. Then
\[
\Delta_\delta\big(\SV(\varepsilon)\big)=
\begin{cases}
\Der\ \SV(\varepsilon), & \delta=1,\\
\Span{\Id}, & \delta=\frac12,\\
0, & \delta\ne\frac12,1.
\end{cases}
\]
In other words, $\SV(\varepsilon)$ has no nonzero $\delta$-derivations for
$\delta\notin\{\frac12,1\}$, every $\frac12$-derivation of $\SV(\varepsilon)$ is trivial (cf. Theorem
\ref{thm:half} and \cite{YH22}), and the
$1$-derivations are the derivations described in Theorem \ref{thm:der}.
\end{corollary}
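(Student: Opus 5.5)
The case $\delta=0$ is immediate. Here \eqref{eq:dd} says $\varphi([x,y])=0$, and since $\SV(\varepsilon)$ is perfect (Lemma \ref{lem:perfect}), $\varphi=0$. The case $\delta=1$ is the definition of $\Der$.

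For $\delta\neq0,1$, a $\delta$-derivation $\varphi$ is a quasi-derivation with associated map $\varphi'=\delta^{-1}\varphi$. By uniqueness of the associated map (Lemmas \ref{lem:unique} and \ref{lem:perfect}), Theorem \ref{thm:main} gives a decomposition
\[
\varphi=d+\lambda\Id+\sum_{t\ne0}\alpha_t\Omega_t+\sum_{t\ne0}\beta_t\Psi_t+\sum_s\gamma_s\Theta_s+\zeta
\]
together with
\[
\delta^{-1}\varphi=d+2\lambda\Id+\sum_{t\ne0}\alpha_t\Omega'_t+\sum_{t\ne0}\beta_t\Psi'_t+\sum_s\gamma_s\Theta'_s .
\]
Multiplying the first identity by $\delta^{-1}$ and subtracting the second gives one linear relation in the maps $d,\Id,\Omega_t,\Omega'_t,\Psi_t,\Psi'_t,\Theta_s,\Theta'_s,\zeta$. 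I will evaluate this relation on basis elements, separating homogeneous components in the spirit of the uniqueness argument in the proof of Theorem \ref{thm:main}.

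\emph{Odd part.} On $L_m$ the $Y$-component gives $\gamma_s(\delta^{-1}-\tfrac32)=0$, and on $Y_n$ the $M$-component gives $\gamma_s(\delta^{-1}-2)=0$. These two conditions cannot hold together for $\gamma_s\neq0$, so every $\gamma_s=0$. The odd components of $d$ are multiples of $\ad_{Y_s}$, and on these the relation reduces to $(\delta^{-1}-1)\ad_{Y_s}$ plus central corrections. Since $\delta\ne1$ and $\ad_{Y_s}\neq0$ outside a finite set of basis elements, the coefficients $w_s$ vanish.

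\emph{Even part, degree $t\neq0$.} Evaluating on $L_m$, $Y_n$ and $M_m$ for generic indices, the coefficients of $\Omega_t$ force $\alpha_t(\delta^{-1}-2)=0$ from $L$, $\alpha_t(\tfrac12\delta^{-1}-\tfrac32)=0$ from $Y$, and $\alpha_t\cdot1=0$ from $M$, because $\Omega'_t(M_m)=M_{m+t}$ while $\Omega_t(M_m)=0$. Hence $\alpha_t=0$. For $\Psi_t$, comparing $\delta^{-1}m$ with $m+t$ for all $m$ forces $\beta_t=0$. The inner parts $u_t\ad_{L_t}+v_t\ad_{M_t}$ get the factor $\delta^{-1}-1\neq0$, so they vanish as well.

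\emph{Degree $0$.} Write $d=u\ad_{L_0}+\lambda_1D_1+\lambda_2D_2+\lambda_3D_3$. On $L_m$ the relation gives $\lambda(\delta^{-1}-2)=0$ and $u(\delta^{-1}-1)=0$. On $M_m$, $Y_n$ and the $M$-part of $L_m$, the factor $\delta^{-1}-1$ forces $\lambda_1=\lambda_2=\lambda_3=0$, after discarding the finitely many basis elements where $\zeta$ contributes. Thus $\lambda\neq0$ is possible only when $\delta=\tfrac12$, and we are left with $\varphi=\lambda\Id+\zeta$.

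\emph{The central map $\zeta$.} I expect this to be the main obstacle, because $\zeta$ has no associated-map counterpart: $\zeta'=0$. Once everything else is known, the relation gives $\delta^{-1}(\lambda\Id+\zeta)=2\lambda\Id$, so $\delta^{-1}\zeta=(2-\delta^{-1})\lambda\Id$. The left side takes values in $\Span{M_0}$ and the right side does not unless it vanishes, so $\zeta=0$. Finally, $\lambda=0$ unless $\delta=\tfrac12$, and $\Id$ is indeed a $\tfrac12$-derivation. This yields the three cases of the statement and recovers Theorem \ref{thm:half}.
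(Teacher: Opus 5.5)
Your proof is correct and takes essentially the paper's route: reduce to homogeneous components satisfying $D'=\delta^{-1}D$, then compare the coefficients of $D'$ and $\delta^{-1}D$ on $M_n$, $L_m$ and $Y_n$ in each degree, where $\zeta$ causes no difficulty. The differences are minor. The paper reads off the shape of each component directly from Propositions \ref{prop:even} and \ref{prop:odd} rather than from Theorem \ref{thm:main}, so it never needs Theorem \ref{thm:der} to know the components of $d$. Also, conditions such as $\gamma_s(\delta^{-1}-\tfrac32)=0$ and $\alpha_t=0$ only come out after separating the constant and index-linear parts, since $\gamma_s\Theta_s$ and $w_s\ad_{Y_s}$ (resp.\ $\alpha_t\Omega_t$ and $u_t\ad_{L_t}$) contribute to the same coefficient; you gloss over this, while the paper does it explicitly.
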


\begin{proof}
The case $\delta=1$ is the definition. If $\delta=0$, then a $0$-derivation vanishes on
$[\SV(\varepsilon),\SV(\varepsilon)]=\SV(\varepsilon)$ (Lemma \ref{lem:perfect}). Let
$\delta\notin\{0,1\}$ and put $c=\delta^{-1}\neq1$. A linear map $\varphi$ is a
$\delta$-derivation if and only if it is a quasi-derivation with $\varphi'=c\varphi$, and by
Lemma \ref{lem:grading} every homogeneous component $D$ of $\varphi$ is a quasi-derivation
with $D'=cD$. We use Propositions \ref{prop:even} and \ref{prop:odd} together with Lemmas
\ref{lem:unique} and \ref{lem:new}.

Let $D$ be even of degree $j\ne0$, so that
\begin{center}$D=u\ad_{L_j}+v\ad_{M_j}+\mu\Omega_j+\nu\Psi_j+\zeta$ and
$D'=u\ad_{L_j}+v\ad_{M_j}+\mu\Omega'_j+\nu\Psi'_j$.\end{center} Comparing $D'(M_n)$ and $cD(M_n)$ for
$n\ne-j$, we get $-un+\mu=-cun$ for all $n\neq-j$, so $u=\mu=0$. Comparing the coefficients at
$M_{m+j}$ in $D'(L_m)$ and $cD(L_m)$ for $m\ne-j$, we get $vj+\nu(m+j)=c(vj+\nu m)$ for all
$m\ne-j$, so $\nu=0$ and $v=0$. Then $D=\zeta$ and $0=D'=c\zeta$, so $D=0$.

Let $D$ be even of degree $0$, so that
\begin{center}$D=u\ad_{L_0}+\lambda_1D_1+\lambda_2D_2+\lambda_3D_3+\lambda\Id+\zeta$ and
$D'=u\ad_{L_0}+\lambda_1D_1+\lambda_2D_2+\lambda_3D_3+2\lambda\Id$.\end{center} Comparing $D'(L_m)$ and
$cD(L_m)$ for $m\ne0$, we get
\begin{center}$-um+2\lambda=c(-um+\lambda)$ and
$\lambda_1+\lambda_2m=c(\lambda_1+\lambda_2m)$,
\end{center}so $u=\lambda_1=\lambda_2=0$ and
$(2-c)\lambda=0$. Comparing $D'(Y_n)$ and $cD(Y_n)$, we get
$\lambda_3+2\lambda=c(\lambda_3+\lambda)$, whence $(1-c)\lambda_3=0$ and $\lambda_3=0$. Then
$D=\lambda\Id+\zeta$, and $D'=2\lambda\Id=cD$ gives $c\zeta=(2-c)\lambda\Id=0$, so $\zeta=0$ and
$D=\lambda\Id$ with $\lambda=0$ unless $c=2$, i.e., $\delta=\frac12$.

Let $D$ be odd of degree $s$, so that $D=w\ad_{Y_s}+\gamma\Theta_s+\zeta$ and
$D'=w\ad_{Y_s}+\gamma\Theta'_s$. Comparing $D'(L_m)$ and $cD(L_m)$, we get
$w(s-\frac m2)+\frac32\gamma=c\big(w(s-\frac m2)+\gamma\big)$ for all $m\in\Z$, and comparing
$D'(Y_n)$ and $cD(Y_n)$ for $n\ne-s$, we get $w(s-n)+2\gamma=c\big(w(s-n)+\gamma\big)$. Hence
$w=0$, $(\frac32-c)\gamma=(2-c)\gamma=0$, so $\gamma=0$. Then $D=\zeta$ and $0=D'=c\zeta$, so $D=0$.

Since $\varphi$ is the sum of its homogeneous components, $\varphi=0$ if
$\delta\neq\frac12$ and $\varphi\in\Span{\Id}$ if $\delta=\frac12$. Conversely, $\Id$ is a
$\frac12$-derivation.
\end{proof}

The description of biderivations of $\SV(\varepsilon)$ is given in the following theorem.

\begin{theorem}[see \cite{LT19}]\label{thm:LTbider}
A bilinear map $\phi$ of $\SV(\varepsilon)$ is a biderivation if and only if there are
$\lambda\in\C$ and complex numbers $\{\mu_i\}_{i\in\Z}$, only finitely many of which are nonzero, such that
$\phi(x,y)=\lambda[x,y]+\chi(x,y)$ for all $x,y\in\SV(\varepsilon)$, where $\chi$ is the
bilinear map defined by
\[
\chi(L_m,L_n)=\sum\limits_{i\in\Z}\mu_iM_{m+n+i}\quad (m,n\in\Z),
\]
and $\chi(x,y)=0$ if one of the basis elements $x,y$ belongs to
$\{Y_n,M_m\mid m\in\Z,\ n\in\varepsilon+\Z\}$.
\end{theorem}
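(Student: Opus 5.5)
Since every biderivation restricts, in each argument, to derivations, the plan is to run everything through Theorem \ref{thm:der}. First handle the easy direction. The map $\lambda[x,y]$ is a biderivation by the Jacobi identity. For $\chi$, note that $\chi(x,\cdot)$ and $\chi(\cdot,y)$ are linear combinations of maps $L_m\mapsto M_{m+i}$ that vanish on $Y_n$ and $M_n$. Such a map is exactly $\ad_{M_i}/i$ up to the $D_1$-type term: $L_m\mapsto M_{m+k}$ is a derivation, namely $\frac1k\ad_{M_k}$ for $k\ne0$ and $D_1$ for $k=0$. Hence both partial maps are derivations, and $\chi$ is a biderivation.

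For the converse, let $\phi$ be a biderivation. For each $z$, the map $\phi_z=\phi(\cdot,z)$ is a derivation. By Theorem \ref{thm:der} we can write
\[
\phi_z=\ad_{x(z)}+a_1(z)D_1+a_2(z)D_2+a_3(z)D_3,
\]
where $x(z)$ is determined modulo $Z=\Span{M_0}$ and the $a_i$ are linear in $z$. The standard trick is to use the symmetric relation $\phi(x,y)=$ (derivation in $y$ applied to $y$) and compare. We have $\phi(x,y)=\phi_y(x)$, and also $\phi(x,y)={}^x\phi(y)$, where ${}^x\phi=\ad_{x'(x)}+\sum b_i(x)D_i$. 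First I would show $a_3=b_3=0$: take $x=y=Y_n$. Skew-type identities come from $\phi([u,v],w)$ expansions. In fact, the classical argument for perfect centerless-modulo-center algebras gives $[\phi(x,y),[u,v]]=[[x,y],\phi(u,v)]$. Using this with the graded structure, I would evaluate on basis elements.

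Concretely, I plan to evaluate $\phi(L_m,L_n)$, $\phi(L_m,Y_n)$, etc., in two ways and compare coefficients. The first way is $\phi_{L_n}(L_m)$, which involves $x(L_n)$ and the $a_i(L_n)$. The second is ${}^{L_m}\phi(L_n)$. Taking $x=y$ gives $[x,x(x)]+\sum a_i(x)D_i(x)=[x'(x),x]+\sum b_i(x)D_i(x)$. Working with $x=L_0$ and using the grading by $L_0$-eigenvalues should show $x(z)-\lambda z$ is central modulo the $D_1,D_2$ contributions. So the goal is to show $x(z)=\lambda z+(\text{something in }\Span{M_k})$.

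The main obstacle is controlling the map $z\mapsto x(z)$, which is linear only modulo $Z$ and is a priori arbitrary. I would first apply the identity $[\phi(x,y),[u,v]]=[[x,y],\phi(u,v)]$ with $x=y$-type choices to get $\phi(z,z)$-constraints. This should yield $[x(z)-\lambda z, \SV(\varepsilon)]\subseteq\Span{M_k}$-valued maps supported on $L$'s. That forces $x(z)-\lambda z\in\Span{M_k}$, and moreover forces it to vanish unless $z\in\Span{L_m}$. Finally, I would use derivation-in-the-second-argument applied to $\phi(L_m,L_n)=\sum(\cdots)M_{m+n+i}$, together with $[L_k,\cdot]$-compatibility, to show the coefficients depend only on $m+n$ shifted by a fixed finite $i$-profile. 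Finite support of the $\mu_i$ would follow from $\phi(L_0,L_0)$ being a finite sum. The $D_2$ and $D_3$ parts must be shown to vanish via the $(m-n)$ antisymmetry.
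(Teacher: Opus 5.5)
The paper does not prove this statement; it is quoted from \cite{LT19}, so your proposal has to stand on its own. Your sufficiency direction is fine. For $k\neq0$ the map $L_m\mapsto M_{m+k}$, $Y_n,M_n\mapsto0$ equals $\frac1k\ad_{M_k}$, and for $k=0$ it is $D_1$. Every partial map of $\chi$ is a finite combination of these.

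\textbf{The gap in the necessity direction.} The one concrete tool you propose is the identity $[\phi(x,y),[u,v]]=[[x,y],\phi(u,v)]$, and it is false in this setting. It is the standard lemma for \emph{skew-symmetric} biderivations. For an arbitrary biderivation, expanding $\phi([x,u],[y,v])$ in the two possible orders only gives
\[
[\phi(x,y),[u,v]]-[[x,y],\phi(u,v)]=[\phi(u,y),[x,v]]-[[u,y],\phi(x,v)].
\]
So the defect is symmetric under $x\leftrightarrow u$, and you need $\phi(y,x)=-\phi(x,y)$ to force it to vanish. In this theorem biderivations are not assumed skew-symmetric, and the map $\chi$ in the statement itself violates your identity. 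Take $\mu_0=1$ and all other $\mu_i=0$, with $x=y=L_1$, $u=L_p$, $v=L_q$. The left side is $(p-q)[M_2,L_{p+q}]=2(p-q)M_{p+q+2}$, while the right side is $0$. You plan to use the identity with $x=y$; together with perfectness this would give $\phi(z,z)\in\Span{M_0}$ for all $z$, contradicting $\chi(L_1,L_1)=M_2$. Your argument would therefore ``prove'' that the $\chi$-part vanishes, which is false.

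\textbf{The remaining steps.} Getting $a_3=b_3=0$ from $x=y=Y_n$, controlling $z\mapsto x(z)$ via $L_0$-eigenvalues, and killing the $D_2,D_3$ parts ``via the $(m-n)$ antisymmetry'' are not yet arguments. The last of these again presupposes a skew-symmetry that $\phi$ does not have.

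\textbf{A possible repair.} Since $\phi^{\mathrm{op}}(x,y)=\phi(y,x)$ is again a biderivation, split $\phi=\phi^-+\phi^+$ with $\phi^{\pm}=\frac12(\phi\pm\phi^{\mathrm{op}})$.
\begin{itemize}
\item The two-term identity is legitimate for $\phi^-$, and it is the natural route to $\phi^-=\lambda[\cdot,\cdot]$.
\item The symmetric part $\phi^+$ is exactly where $\chi$ lives. There you must work directly with the relation $[x(z),y]+\sum_i a_i(z)D_i(y)=[x(y),z]+\sum_i a_i(y)D_i(z)$ on homogeneous components. The aim is to show that $x(Y_n),x(M_n)\in\Span{M_0}$, that $x(L_n)\in\Span{M_k\mid k\in\Z}$ modulo the center, that $a_2=a_3=0$, and that $\phi^+(L_m,L_n)$ depends only on $m+n$.
\end{itemize}
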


\begin{corollary}\label{cor:bider}
Let $\varepsilon\in\{0,\frac12\}$ and $\delta\in\C$.
\begin{enumerate}[{\rm (1)}]
\item If $\delta\ne1$, then every $\delta$-biderivation of $\SV(\varepsilon)$ is zero.
\item If $\delta=1$, then the $\delta$-biderivations of $\SV(\varepsilon)$ are the
biderivations described in Theorem \ref{thm:LTbider}.
\end{enumerate}
\end{corollary}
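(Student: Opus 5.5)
Part (2) is Theorem \ref{thm:LTbider} itself, so only part (1) needs an argument. Let $\delta\neq1$ and let $\phi$ be a $\delta$-biderivation. For each fixed $z$, the maps $x\mapsto\phi(x,z)$ and $x\mapsto\phi(z,x)$ are $\delta$-derivations. By Corollary \ref{cor:delta}, if $\delta\notin\{\frac12,1\}$ they are zero, so $\phi=0$ at once. The only real case is $\delta=\frac12$.

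Let $\delta=\frac12$. By Corollary \ref{cor:delta} there are functions $\lambda,\mu:\SV(\varepsilon)\to\C$ with
\[
\phi(x,z)=\lambda(z)x,\qquad \phi(z,x)=\mu(z)x \quad\text{for all } x,z.
\]
Linearity of $\phi$ in the relevant argument makes $\lambda$ and $\mu$ linear functionals. Hence $\phi(x,y)=\lambda(y)x=\mu(x)y$ for all $x,y$.

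Take $x,y$ linearly independent, for instance two distinct basis elements. The identity $\lambda(y)x=\mu(x)y$ forces $\lambda(y)=0$ and $\mu(x)=0$. Every basis element has some other basis element independent of it, so $\lambda$ vanishes on the basis. Therefore $\lambda=0$ and $\phi=0$.

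The only point needing care is that the scalar attached to each $\delta$-derivation depends linearly on the fixed argument. This holds because $\Id\neq0$, so the scalar $\lambda(z)$ is uniquely determined by the map $x\mapsto\phi(x,z)$. I do not expect a genuine obstacle; the content is entirely in Corollary \ref{cor:delta}.
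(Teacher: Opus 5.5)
Your proposal is correct and follows essentially the same route as the paper: reduce to Corollary \ref{cor:delta}, and for $\delta=\frac12$ compare $\lambda(y)x=\mu(x)y$ for linearly independent $x,y$. The only cosmetic difference is that you first prove $\lambda$ is linear and check that it vanishes on basis elements. The paper instead shows $\lambda(y)=0$ directly for arbitrary $y$, by choosing a basis element $P$ that does not occur in $y$, so it needs no linearity argument.
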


\begin{proof}
The second statement is Theorem \ref{thm:LTbider}. Let $\delta\ne1$ and let $\phi$ be a
$\delta$-biderivation. For every $z$ the maps $x\mapsto\phi(x,z)$ and $x\mapsto\phi(z,x)$ are
$\delta$-derivations. If $\delta\neq\frac12$, they vanish by Corollary \ref{cor:delta}, so
$\phi=0$. Let $\delta=\frac12$. By Corollary \ref{cor:delta}, there are maps
$\lambda,\mu:\SV(\varepsilon)\to\C$ such that $\phi(x,y)=\lambda(y)x=\mu(x)y$ for all $x,y$.
Given $y$, choose a basis element $P$ which does not occur in the decomposition of $y$ with
respect to the basis. Comparing the coefficients at $P$ in $\lambda(y)P=\phi(P,y)=\mu(P)y$, we
get $\lambda(y)=0$. Hence $\phi(x,y)=\lambda(y)x=0$ for all $x,y$.
\end{proof}

Recall that transposed $\delta$-Poisson structures were defined in Definition \ref{def:tp};
for $\delta=\frac12$ these are exactly the transposed Poisson structures. By Lemma
\ref{lem:tpbider}, if $\cdot$ is a transposed $\delta$-Poisson structure on a Lie algebra $L$,
then the bilinear map $\phi(x,y)=x\cdot y$ is a $\delta$-biderivation of $L$.

\begin{corollary}\label{cor:tp}
If $\delta\neq1$, then every transposed $\delta$-Poisson structure on $\SV(\varepsilon)$ is trivial.
If $\delta=1$, then the transposed $1$-Poisson structures on $\SV(\varepsilon)$ are exactly
the multiplications given by
\[
L_m\cdot L_n=\sum_{i\in\Z}\mu_iM_{m+n+i}\qquad(m,n\in\Z),
\]
with all other products of basis elements equal to zero, where $\mu_i\in\C$ and only
finitely many of the $\mu_i$ are nonzero.
\end{corollary}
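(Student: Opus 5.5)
The argument runs through Lemma \ref{lem:tpbider} and Corollary \ref{cor:bider}. Let $\cdot$ be a transposed $\delta$-Poisson structure on $\SV(\varepsilon)$. By Lemma \ref{lem:tpbider}, $\phi(x,y)=x\cdot y$ is a $\delta$-biderivation.

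If $\delta\neq1$, Corollary \ref{cor:bider}(1) gives $\phi=0$, so the product is trivial. (If one wants to avoid the $\frac12$ case of that corollary, for $\delta\notin\{\frac12,1\}$ the multiplication operators are $\delta$-derivations and hence zero by Corollary \ref{cor:delta}. For $\delta=\frac12$ they are scalar multiples $z\cdot x=\lambda(z)x$, and commutativity together with the basis-element trick from the proof of Corollary \ref{cor:bider} forces $\lambda=0$.)

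Now let $\delta=1$. Then $\phi$ is a biderivation, so by Theorem \ref{thm:LTbider} we have $x\cdot y=\lambda[x,y]+\chi(x,y)$. Commutativity of $\cdot$ and skew-symmetry of the bracket give
\[
2\lambda[x,y]=\chi(x,y)-\chi(y,x).
\]
Since $\chi(L_m,L_n)=\chi(L_n,L_m)$ and $\chi$ vanishes elsewhere, the right-hand side is zero. Taking $x=L_1$, $y=L_0$ gives $\lambda L_1=0$, so $\lambda=0$. Thus $x\cdot y=\chi(x,y)$, which is exactly the stated form.

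The remaining step, and the main thing to check, is the converse: every such $\chi$ really is a transposed $1$-Poisson structure. Commutativity is clear. For associativity, every product lies in $\Span{M_k}$, and $\chi$ vanishes whenever an argument is some $M_k$, so both $(x\cdot y)\cdot z$ and $x\cdot(y\cdot z)$ are zero.

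For the compatibility \eqref{eq:tdp} with $\delta=1$, we need each operator $x\mapsto z\cdot x$ to be a derivation. This follows from Theorem \ref{thm:LTbider}, since $\chi$ is a biderivation. One can also verify it directly: the operator $L_p\cdot(-)$ sends $L_n\mapsto\sum_i\mu_iM_{n+p+i}$ and kills $Y$ and $M$. This is a finite combination of the maps $L_n\mapsto M_{n+k}$, and each of these equals $D_1$ composed with a shift. Checking that such a map is a derivation is a one-line computation: on the pair $(L_m,L_n)$ we get
\[
-nM_{m+n+k}+mM_{m+n+k}=(m-n)M_{m+n+k},
\]
on pairs $(L_m,Y_n)$ and $(L_m,M_n)$ both sides vanish, and the other pairs are trivial. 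The operators $Y\cdot(-)$ and $M\cdot(-)$ are zero. This completes the plan.
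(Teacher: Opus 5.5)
Your proposal is correct and follows the paper's proof essentially step for step: Corollary \ref{cor:bider} handles $\delta\neq1$; for $\delta=1$, Theorem \ref{thm:LTbider} together with commutativity at $x=L_1$, $y=L_0$ gives $\lambda=0$; and for the converse, associativity holds because all products land in $\Span{M_m\mid m\in\Z}$, while compatibility is the biderivation identity \eqref{eq:bd2} for $\chi$. Two slips are harmless because the quantities involved vanish or sum correctly: commutativity actually gives $2\lambda[x,y]=\chi(y,x)-\chi(x,y)$, and in your optional direct check the two terms are $(m+k)M_{m+n+k}$ and $-(n+k)M_{m+n+k}$ (their sum is still $(m-n)M_{m+n+k}$).
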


\begin{proof}
Let $\cdot$ be a transposed $\delta$-Poisson structure. By Lemma \ref{lem:tpbider},
$\phi(x,y)=x\cdot y$ is a $\delta$-biderivation, so $\phi=0$ if $\delta\neq1$ by Corollary
\ref{cor:bider}. Let $\delta=1$. By Theorem \ref{thm:LTbider}, $x\cdot y=\lambda[x,y]+\chi(x,y)$.
Since both $\cdot$ and $\chi$ are commutative, $\lambda[x,y]=\lambda[y,x]$ for all $x,y$; for
$x=L_1$, $y=L_0$ this gives $2\lambda L_1=0$, so $\lambda=0$ and $x\cdot y=\chi(x,y)$.
Conversely, let $x\cdot y=\chi(x,y)$. This multiplication is commutative. It is associative,
since all products lie in $\Span{M_m\mid m\in\Z}$ and $M_k\cdot y=0$ for all $k$ and $y$, so
that $(x\cdot y)\cdot z=0=x\cdot(y\cdot z)$. Finally, \eqref{eq:tdp} with $\delta=1$ is the identity
\eqref{eq:bd2} for the biderivation $\chi$ (Theorem \ref{thm:LTbider}), with $(x,y,z)$ replaced by $(z,x,y)$.
\end{proof}

\section{The extended Schr\"odinger--Witt algebras}\label{sec:esv}

\begin{definition}\label{def:esv}
Let $\varepsilon\in\{0,\frac12\}$. The \emph{extended Schr\"odinger--Witt algebra}
$\ESV(\varepsilon)$ is the Lie algebra with the basis
$\{L_m,\ M_m,\ N_m,\ Y_n\mid m\in\Z,\ n\in\varepsilon+\Z\}$ and the following nontrivial
relations:
\begin{equation}\label{eq:esv}
\begin{aligned}
&[L_m,L_n]=(m-n)L_{m+n}, && [L_m,Y_n]=\Big(\frac m2-n\Big)Y_{m+n},
&& [L_m,M_n]=-nM_{m+n},\\
&[Y_m,Y_n]=(m-n)M_{m+n}, && [L_m,N_n]=-nN_{m+n},
&& [N_m,Y_n]=Y_{m+n},\\
&[N_m,M_n]=2M_{m+n}. && &&
\end{aligned}
\end{equation}
All other brackets of basis elements are zero (in particular, $[N_m,N_n]=0$).
\end{definition}

The algebra $\ESV(\frac12)$ is the extended Schr\"odinger--Witt algebra (written in the
conventions of the present paper; see \cite{YH22} and the references therein), and
$\ESV(0)$ is its twisted version. A direct verification on triples of basis elements shows
that the Jacobi identity holds. Clearly, $\SV(\varepsilon)=\Span{L_m,M_m,Y_n}$ is a
subalgebra of $\ESV(\varepsilon)$. As for $\SV(\varepsilon)$, the indices of $L_m$,
$M_m$, $N_m$ belong to $\Z$ and the indices of $Y_n$ belong to $\varepsilon+\Z$.

The algebra $\ESV(\varepsilon)$ is graded by $G=\frac12\Z\times\Z_2$:
\[
\ESV(\varepsilon)_{(t,\bar0)}=\Span{L_t,M_t,N_t}\ (t\in\Z),\qquad
\ESV(\varepsilon)_{(t,\bar1)}=\Span{Y_t}\ (t\in\varepsilon+\Z),
\]
and all other homogeneous components are zero. As for $\SV(\varepsilon)$, we speak
about \emph{even} linear maps of degree $j\in\Z$ and \emph{odd} linear maps of degree
$s\in\varepsilon+\Z$.

  Note that the restriction of $\ad_{N_0}$ to the subalgebra $\SV(\varepsilon)$ of
$\ESV(\varepsilon)$ is the outer derivation $D_3$ of Theorem \ref{thm:der}. For every
$t\in\Z$ and $s\in\varepsilon+\Z$ the inner derivations act as follows (we list only the
values not given by \eqref{eq:ad}):
\begin{equation}\label{eq:adesv}
\begin{aligned}
&\ad_{L_t}(N_m)=-mN_{m+t}, && \ad_{M_t}(N_m)=-2M_{m+t}, && \ad_{Y_s}(N_m)=-Y_{m+s},\\
&\ad_{N_t}(L_m)=tN_{m+t}, && \ad_{N_t}(Y_n)=Y_{n+t}, && \ad_{N_t}(M_m)=2M_{m+t}.
\end{aligned}
\end{equation}

Note also that $\ad_{N_0}$ acts diagonally: $$[N_0,L_m]=[N_0,N_m]=0, \quad [N_0,Y_n]=Y_n, \quad [N_0,M_m]=2M_m.$$

\begin{lemma}[see \cite{GJP09} for $\varepsilon=\frac12$]\label{lem:esvperfect}
The algebra $\ESV(\varepsilon)$ is perfect and $Z(\ESV(\varepsilon))=0$.
\end{lemma}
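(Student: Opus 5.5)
The plan is to check the two claims directly on basis elements, using the bracket table \eqref{eq:esv}.

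\emph{Perfectness.} It suffices to show that every basis element lies in $[\ESV(\varepsilon),\ESV(\varepsilon)]$. Since $\SV(\varepsilon)$ is a subalgebra and is perfect by Lemma \ref{lem:perfect}, all $L_m$, $M_m$, $Y_n$ are already sums of brackets. It could be more direct to exhibit them explicitly: $L_m=\frac{1}{2m}[L_{2m},L_{-m}]$ for $m\ne0$ (up to the factor $3m$; precisely $[L_{2m},L_{-m}]=3mL_m$), and $L_0=\frac12[L_1,L_{-1}]$. Next, $M_m=\frac12[N_0,M_m]$ and $Y_n=[N_0,Y_n]$. It remains to handle $N_m$. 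For $m\ne0$ we have $[L_0,N_m]=-mN_m$. For $N_0$ we use $[L_1,N_{-1}]=N_0$. Hence the algebra is perfect.

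\emph{Center.} Let $z\in Z(\ESV(\varepsilon))$. Because $Z$ is a graded subspace (bracket with $L_0$, and $[L_0,\cdot]$ together with $\ad_{N_0}$ acting diagonally, separate components), it would suffice to treat homogeneous $z$. More simply, the center is invariant under the semisimple operators $\ad_{L_0}$ and $\ad_{N_0}$, so we may take $z$ homogeneous, either even $z=aL_t+bM_t+cN_t$ or odd $z=Y_t$-multiple.

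For the odd case, $[N_0,Y_t]=Y_t\neq0$, so $z=0$. For the even case, compute
\[
[N_0,z]=2bM_t,
\]
which forces $b=0$. Then
\[
[L_k,z]=a(k-t)L_{k+t}-ctN_{k+t}.
\]
Choosing $k\ne t$ gives $a=0$. Finally $[Y_n,cN_t]=-cY_{n+t}$, which gives $c=0$. Therefore $Z(\ESV(\varepsilon))=0$.

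The only mild subtlety is the reduction to homogeneous elements. If needed, I would avoid it by writing a general $z$ as a finite sum and applying $\ad_{N_0}$ and $\ad_{L_0}$, whose eigenvalues distinguish the components, so that $z$ central implies each component is central. This is the step I would state carefully. The remaining calculations are routine.
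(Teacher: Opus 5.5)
Your proof is correct and follows essentially the same route as the paper: explicit brackets give perfectness, and weight considerations plus test brackets kill the center. Your $L_m=\frac{1}{3m}[L_{2m},L_{-m}]$ (not $\frac{1}{2m}$) and $N_0=[L_1,N_{-1}]$ work just as well as the paper's $L_m=\frac1m[L_m,L_0]$ and $N_m=-[L_{m-1},N_1]$. The only difference is that the paper uses $[L_0,z]=0$ for central $z$ to place $z\in\Span{L_0,Y_0,M_0,N_0}$ at once, so your reduction to homogeneous components of arbitrary degree $t$ is not needed.
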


\begin{proof}
We have 
\begin{center}$L_m=\frac1m[L_m,L_0]$ for $m\neq0$, $L_0=\frac12[L_1,L_{-1}]$,\\
$N_m=-[L_{m-1},N_1]$, $M_m=\frac12[N_0,M_m]$ and $Y_n=[N_0,Y_n]$.\end{center} Hence
$\ESV(\varepsilon)$ is perfect. Let $z$ be a central element. Since
\begin{center}
$[L_0,L_m]=-mL_m$, $[L_0,Y_n]=-nY_n$, $[L_0,M_n]=-nM_n$ and $[L_0,N_n]=-nN_n$,
\end{center} the equality
$[L_0,z]=0$ gives $z=\lambda L_0+\mu Y_0+\nu M_0+\rho N_0$ (with $\mu=0$ if
$\varepsilon=\frac12$). Then $0=[N_1,z]=\lambda N_1+\mu Y_1+2\nu M_1$ gives
$\lambda=\mu=\nu=0$, and $0=[z,Y_n]=\rho Y_n$ gives $\rho=0$.
\end{proof}

By Lemmas \ref{lem:unique} and \ref{lem:esvperfect}, the map $D'$ associated with a
quasi-derivation $D$ of $\ESV(\varepsilon)$ is unique, and $\ESV(\varepsilon)$ has no nonzero
central maps. Since every homogeneous component of a quasi-derivation of
$\ESV(\varepsilon)$ is a quasi-derivation (Lemma \ref{lem:grading}), the associated map of a homogeneous
quasi-derivation is homogeneous of the same degree.

\begin{remark}\label{rem:esvknown}
The structure of the original extended Schr\"odinger--Witt algebra $\ESV(\frac12)$ was
studied by Gao, Jiang and Pei \cite{GJP09}; in particular, they proved that every derivation
of $\ESV(\frac12)$ is inner. This result is recovered, for both algebras $\ESV(\frac12)$ and
$\ESV(0)$, in Corollary \ref{cor:esvder} below.
The $\frac12$-derivations and the transposed Poisson structures of the original extended
Schr\"odinger--Witt algebra $\ESV(\frac12)$ were studied in \cite{YH22}:
every $\frac12$-derivation of $\ESV(\frac12)$ is trivial and all transposed Poisson structures
on $\ESV(\frac12)$ are trivial. These results are recovered, for both algebras $\ESV(\frac12)$
and $\ESV(0)$, in Corollaries \ref{cor:esvdelta} and \ref{cor:esvtp} below.
\end{remark}

\subsection{Quasi-derivations of the extended Schr\"odinger--Witt algebras}

\begin{proposition}\label{prop:esveven}
Let $D$ be an even quasi-derivation of degree $j\in\Z$ of $\ESV(\varepsilon)$. Then there
are $u,v,w,\lambda\in\C$, with $\lambda=0$ if $j\neq0$, such that
\[
D=u\ad_{L_j}+v\ad_{M_j}+w\ad_{N_j}+\lambda\Id.
\]
\end{proposition}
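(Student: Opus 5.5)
The plan is to copy the strategy of Proposition \ref{prop:even} and reduce as much as possible to the subalgebra $\SV(\varepsilon)$. The map $D$ and its associated map $D'$ are even of degree $j$. I write
\[
D(L_m)=a_mL_{m+j}+b_mM_{m+j}+p_mN_{m+j},\qquad D(Y_n)=c_nY_{n+j},
\]
\[
D(M_m)=d_mL_{m+j}+e_mM_{m+j}+q_mN_{m+j},\qquad D(N_m)=x_mL_{m+j}+y_mM_{m+j}+z_mN_{m+j}.
\]
The primed coefficients for $D'$ are written in the same way. Then I apply \eqref{eq:qd} to all pairs of basis elements and collect the relations.

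\textbf{Step 1: the components leaving the expected pattern.} The pairs $(Y_m,M_n)$, $(M_m,M_n)$, $(Y_m,N_n)$ and $(N_m,N_n)$ should kill these components quickly, because the brackets on the right-hand side are mostly zero or diagonal.

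For example, $[Y_m,M_n]=0$ gives $[Y_m,D(M_n)]=0$. Since $[Y_m,L]\ne0$ generically and $[Y_m,N]=-Y$, this forces $d_n=q_n=0$ as in Step 1 of Proposition \ref{prop:even}.

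Next, the pair $(N_m,N_n)$ gives $[D(N_m),N_n]+[N_m,D(N_n)]=0$. This should force $x_m=0$ and yield a relation between $y$ and $z$.

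The pair $(N_m,Y_n)$, with right-hand side $D'(Y_{m+n})$, ties $c$ to $c'$, $z$ and $x$.

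\textbf{Step 2: the Witt-part relations.} The pairs $(L_m,L_n)$, $(L_m,M_n)$ and $(L_m,N_n)$ give relations of the form \eqref{e1}, \eqref{e2} for $a,b$, and a relation of type \eqref{eq:zero} for $p$. Lemmas \ref{lem:witt} and \ref{lem:zero} then make $a,b,p,e,z$ linear in the index, up to possible exceptional values at $-j$.

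The point where $\ESV$ differs from $\SV$ is the pair $(N_m,L_n)$ together with $[N_m,M_n]=2M_{m+n}$ and $[N_m,Y_n]=Y_{m+n}$. These relations compare $D'$ on $M$ and $Y$ with $D$ on $M$ and $Y$ via $\ad N$. Combined with the formulas for $c'$ and $e'$ obtained as in Steps 3--4 of Proposition \ref{prop:even}, I expect them to force the following:
\begin{itemize}
\item the $\Omega_j$-coefficient $\mu$ vanishes, since $\Omega'_j$ and $\Omega_j$ scale $Y$ by $\frac32$ and $\frac12$, which is incompatible with $\ad_{N}$-equivariance;
\item the $\Psi_j$-coefficient vanishes, since $\Psi'_j\ne\Psi_j$ while $[N_m,L_n]$ produces only $N$'s;
\item the $D_1,D_2$ contributions become inner: $D_3$ is replaced by $\ad_{N_0}$, and $D_1,D_2$ are ruled out by testing on the pair $(N_m,L_n)$.
\end{itemize}
Since $Z(\ESV(\varepsilon))=0$ (Lemma \ref{lem:esvperfect}), no central correction $\zeta$ survives.

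\textbf{Step 3: comparison and conclusion.} Subtract $u\ad_{L_j}+v\ad_{M_j}+w\ad_{N_j}+\lambda\Id$ with suitable constants, using \eqref{eq:ad} and \eqref{eq:adesv}. I then check that the difference $E$ is a quasi-derivation vanishing on almost all basis elements. It can only possibly be nonzero at index $-j$, where Lemma \ref{lem:zero} left freedom.

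To kill these residual values, I test $E$ against $N_k$ and $Y_k$ for suitable $k$. If $E(x)\ne0$ for some basis element $x$ of index $-j$, then $[E(x),y]$ is nonzero for a suitable $y$, and this contradicts \eqref{eq:qd} because $E'$ is determined by perfectness. For $j\ne0$, $\lambda=0$ because $\Id$ has degree $0$.

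\textbf{Main obstacle.} The hardest step is the bookkeeping at the exceptional index $m=-j$ and the cross-terms between $N$ and $M$, that is, the values $y_m$, $e_m$ and the $N$-component $p_m$ of $D(L_m)$. These are the places where the $\SV$ argument left central freedom, and where the extra relations from $N$ have to be used carefully. I would first try to show $E(L_{-j})=E(M_{-j})=E(N_{-j})=0$ by bracketing with $N_1$ and $Y_{\varepsilon}$.
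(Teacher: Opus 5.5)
\textbf{There is a genuine gap.} Everything that actually determines $D$ is in your Step~2, and there you only state what you expect the $N$-relations to force. The mechanism you propose for it does not work in $\ESV(\varepsilon)$: combining those relations with ``the formulas for $c'$ and $e'$ obtained as in Steps~3--4 of Proposition~\ref{prop:even}''. Now $D(L_m)$ has an $N_{m+j}$-component $p_m$, and $[N,Y]=Y$, $[N,M]=2M$. So the pairs $(L_m,Y_n)$ and $(L_m,M_n)$ no longer give \eqref{e3} and \eqref{e5}, but
\[
\Big(\frac{m+j}2-n\Big)a_m+p_m+\Big(\frac m2-n-j\Big)c_n=\Big(\frac m2-n\Big)c'_{m+n},\qquad na_m-2p_m+(n+j)e_n=ne'_{m+n}.
\]
The $\SV$-conclusions then fail. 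For $D=\ad_{N_j}$ with $j\neq0$ one has $a_m=0$, $p_m=j$, $c_n=1$ and $e_m=e'_m=2$. Hence \eqref{e5} itself is false, and so is the conclusion $c_n=\frac\alpha2+\beta n$ for $j\neq0$ (here $\alpha=\beta=0$).

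In fact, for $j\ne0$ the $\SV$-part of $\ad_{N_j}$ is not a quasi-derivation of $\SV(\varepsilon)$ at all. So the ``$\Omega_j$-'' and ``$\Psi_j$-coefficients'' of $D$ are not even defined, and your three bullets are unproved heuristics. The first bullet is also false for $j=0$, where the $\Omega_0$-part survives inside $\lambda\Id$. Similarly, the pair $(N_m,N_n)$ alone gives only $x_m=mx_1$ and $y_m$ independent of $m$, with no relation to $z$. To get $x_1=0$ you need $(L_m,N_n)$ at $L_{m+n+j}$.

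\textbf{What is missing} is to work directly with the pairs involving $N$; this makes Lemmas~\ref{lem:witt} and~\ref{lem:zero} unnecessary. Once $x\equiv0$:
\begin{enumerate}
\item The pairs $(M_m,N_n)$ and $(Y_m,N_n)$ give $e_m+z_n=e'_{m+n}$ and $c_m+z_n=c'_{m+n}$ for \emph{all} indices. Hence $e$, $z$, $c$ are affine with a common slope $-u$, and no exceptional index appears.
\item The pair $(L_m,N_n)$ with $n=0$ gives $b_m=-\frac j2y$ and $jz_0=0$; with $n=1,2$ it gives $a_m=a_0-um$.
\item The pair $(L_m,M_n)$ with $n=0$ gives $p_m=\frac j2e_0$; with $(m,n)=(0,1)$ it gives $a_0=uj+z_0$.
\item The pair $(Y_m,Y_n)$ fixes the constant term of $c$.
\end{enumerate}
Then $D=u\ad_{L_j}+v\ad_{M_j}+w\ad_{N_j}+\lambda\Id$ on every basis element, with $v=-\frac y2$, $w=\frac12(e_0-z_0)$ and $\lambda=z_0$. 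So the ``main obstacle'' at $m=-j$ never arises.

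Your Step~3 cleanup is sound in itself. A quasi-derivation $E$ vanishing off index $-j$ has $E'=0$, since brackets of basis elements of other indices span $\ESV(\varepsilon)$. Then $[E(x),N_1]=[E(x),Y_k]=0$ forces $E(x)=0$. But this presupposes that $D$ has already been identified on all other basis elements, which is exactly what the proposal leaves undone.
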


\begin{proof}
Since $D$ and $D'$ are even of degree $j$, we can write
\[
\begin{aligned}
&D(L_m)=a_mL_{m+j}+b_mM_{m+j}+r_mN_{m+j}, && D(Y_n)=c_nY_{n+j},\\
&D(M_m)=d_mL_{m+j}+e_mM_{m+j}+p_mN_{m+j}, && D(N_m)=x_mL_{m+j}+y_mM_{m+j}+z_mN_{m+j},
\end{aligned}
\]
and similarly for $D'$, with the coefficients denoted by $a'_m,b'_m,\dots,z'_m$. Applying
\eqref{eq:qd} to the pairs of basis elements indicated in brackets and comparing the
coefficients at the basis elements indicated in brackets, we obtain the following relations:
\begin{align}
&\Big(m-\frac{n+j}2\Big)d_n-p_n=0 && [(Y_m,M_n),\ Y_{m+n+j}], \label{x1}\\
& y_m=y_n && [(N_m,N_n),\ M_{m+n+j}], \label{x2_1}\\
&-nx_m+mx_n=0 && [(N_m,N_n),\ N_{m+n+j}], \label{x2_2}\\
&(n+j-m)x_n=nx'_{m+n} && [(L_m,N_n),\ L_{m+n+j}], \label{x3}\\
&2b_m+(n+j)y_n=ny'_{m+n} && [(L_m,N_n),\ M_{m+n+j}], \label{x4}\\
&na_m+(n+j)z_n=nz'_{m+n} && [(L_m,N_n),\ N_{m+n+j}], \label{x5}\\
&c_m+\Big(\frac{n+j}2-m\Big)x_n+z_n=c'_{m+n} && [(Y_m,N_n),\ Y_{m+n+j}], \label{x6}\\
&e_m-\frac m2x_n+z_n=e'_{m+n} && [(M_m,N_n),\ M_{m+n+j}], \label{x7}\\
&-na_m+2r_m-(n+j)e_n=-ne'_{m+n} && [(L_m,M_n),\ M_{m+n+j}], \label{x8}\\
&(m-n+j)c_m+(m-n-j)c_n=(m-n)e'_{m+n} && [(Y_m,Y_n),\ M_{m+n+j}], \label{x9}
\end{align}
where $m,n\in\Z$ if the corresponding basis elements are $L$, $M$ or $N$, and
$m,n\in\varepsilon+\Z$ if they are $Y$. For example, for the pair $(L_m,N_n)$ we have
\[
[D(L_m),N_n]+[L_m,D(N_n)]=\]\[(m-n-j)x_nL_{m+n+j}-\big(2b_m+(n+j)y_n\big)M_{m+n+j}
-\big(na_m+(n+j)z_n\big)N_{m+n+j}
\]
and $D'([L_m,N_n])=-n\big(x'_{m+n}L_{m+n+j}+y'_{m+n}M_{m+n+j}+z'_{m+n}N_{m+n+j}\big)$, which
gives \eqref{x3}, \eqref{x4} and \eqref{x5}; the other relations are obtained in the same way.

\smallskip
\begin{enumerate}
    \item[\bf{Step 1.}] Taking two different values of $m\in\varepsilon+\Z$ in \eqref{x1}, we get
$d_n=0$ and then $p_n=0$ for all $n\in\Z$.

\smallskip
    \item[\bf{Step 2.}] By \eqref{x2_2} with $n=1$, $x_m=mx_1$ for all $m$. By
\eqref{x3} with $n=1$ we have $(1+j-m)x_1=x'_{m+1}$, and by \eqref{x3} with $n=2$ and $m$
replaced by $m-1$ we have $(3+j-m)x_2=2x'_{m+1}$. Since $x_2=2x_1$, we get
$(3+j-m)x_1=(1+j-m)x_1$, i.e., $x_1=0$. Hence $x_m=0$ for all $m\in\Z$.

\smallskip
    \item[\bf{Step 3.}] By \eqref{x2_1}, there is $y\in\C$ with $y_m=y$ for all
$m$. Then \eqref{x4} with $n=0$ gives $b_m=-\frac j2y$ for all $m$.

\smallskip
    \item[\bf{Step 4.}] Since $x_n=0$, the relation \eqref{x7} reads $e_m+z_n=e'_{m+n}$. Comparing
this relation for the pairs $(m+1,n)$ and $(m,n+1)$, we get $e_{m+1}-e_m=z_{n+1}-z_n$ for all
$m,n\in\Z$. Hence this difference is a constant, which we denote by $-u$, and there are
$\lambda,\eta\in\C$ such that
\[
z_n=\lambda-un,\qquad e_m=\eta-um,\qquad e'_k=e_0+z_k=\eta+\lambda-uk\quad(m,n,k\in\Z).
\]
In the same way, \eqref{x6} reads $c_m+z_n=c'_{m+n}$, whence $c_{m+1}-c_m=z_{n+1}-z_n=-u$, and
there is $\gamma\in\C$ with $c_m=\gamma-um$ for all $m\in\varepsilon+\Z$.

\smallskip
    \item[\bf{Step 5.}] The relation \eqref{x5} with $n=0$ gives $jz_0=0$, i.e.,
\begin{equation}\label{x10}
j\lambda=0.
\end{equation}
By \eqref{x5} with $n=1$ (and $m$ replaced by $m+1$) and with $n=2$, we have\begin{center}
$a_{m+1}+(1+j)z_1=z'_{m+2}$ and $2a_m+(2+j)z_2=2z'_{m+2}$. \end{center}Hence
$2(a_{m+1}-a_m)=(2+j)z_2-2(1+j)z_1=-j\lambda-2u=-2u$, and $a_m=a_0-um$ for all $m\in\Z$.

\smallskip
    \item[\bf{Step 6.}] By \eqref{x8} with $n=0$ we get $r_m=\frac j2\eta$ for all $m$. By \eqref{x8}
with $m=0$ and $n=1$ we get $-a_0+j\eta-(1+j)(\eta-u)=-(\eta+\lambda-u)$, i.e.,
$a_0=uj+\lambda$. Thus $a_m=u(j-m)+\lambda$.

\smallskip
    \item[\bf{Step 7.}] Substituting $c_m=\gamma-um$ and $e'_k=\eta+\lambda-uk$ in \eqref{x9}, we get
$(m-n)(2\gamma-u(m+n)-uj)=(m-n)(\eta+\lambda-u(m+n))$, and taking $m\ne n$ we obtain
$\gamma=\frac12(\eta+\lambda+uj)$.

\smallskip
    \item[\bf{Step 8.}] Let $v=-\frac y2$ and $w=\frac12(\eta-\lambda)$. By \eqref{eq:ad},
\eqref{eq:adesv} and \eqref{x10}, the map $\widehat D=u\ad_{L_j}+v\ad_{M_j}+w\ad_{N_j}+\lambda\Id$
satisfies
\[
\begin{aligned}
\widehat D(L_m)&=\big(u(j-m)+\lambda\big)L_{m+j}+vjM_{m+j}+wjN_{m+j}
=a_mL_{m+j}+b_mM_{m+j}+r_mN_{m+j},\\
\widehat D(Y_n)&=\Big(u\Big(\frac j2-n\Big)+w+\lambda\Big)Y_{n+j}=c_nY_{n+j},\\
\widehat D(M_m)&=(-um+2w+\lambda)M_{m+j}=e_mM_{m+j},\\
\widehat D(N_m)&=-2vM_{m+j}+(\lambda-um)N_{m+j}=y_mM_{m+j}+z_mN_{m+j},
\end{aligned}
\]
where we used $jw=\frac j2\eta$, which holds by \eqref{x10}. By Steps 1--7, $D=\widehat D$.
Finally, $\lambda=0$ if $j\ne0$ by \eqref{x10}.

\end{enumerate}\end{proof}

\begin{proposition}\label{prop:esvodd}
Let $D$ be an odd quasi-derivation of degree $s\in\varepsilon+\Z$ of $\ESV(\varepsilon)$.
Then there is $w\in\C$ such that $D=w\ad_{Y_s}$.
\end{proposition}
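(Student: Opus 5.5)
We mimic the proof of Proposition \ref{prop:odd}, now using the extra generators $N_m$ to eliminate the $\Theta_s$-part and the central part. Since $D$ and $D'$ are odd of degree $s$, we write
\[
D(L_m)=f_mY_{m+s},\quad D(M_m)=g_mY_{m+s},\quad D(N_m)=q_mY_{m+s},\quad D(Y_n)=h_nL_{n+s}+k_nM_{n+s}+l_nN_{n+s},
\]
and analogously for $D'$ with primed coefficients. We then apply \eqref{eq:qd} to all pairs of basis elements and compare coefficients. This gives the relations of Proposition \ref{prop:odd}, now with extra $N$-terms, together with new relations coming from the pairs $(N_m,\cdot)$.

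The first step is to kill the exotic components. The pair $(Y_m,M_n)$ gives $g_n=0$ and $h_m=0$ exactly as in Step 1 of Proposition \ref{prop:odd}; the extra term $[l_mN_{m+s},M_n]=2l_mM_{m+n+s}$ only involves $M$. The pair $(Y_m,N_n)$ then shows that the $N$- and $L$-components vanish: $[D(Y_m),N_n]$ contains $-nh_mN$, while $[Y_m,D(N_n)]$ has no $N$-component. The pair $(Y_m,Y_n)$ in the $N$-coefficient gives $l_m$-type relations, and combining it with $(L_m,Y_n)$ in the $N$-coefficient, which reads $-(n+s)l_n=0$ up to $D'$-terms, should yield $l_n=0$ and $l'_n=0$. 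Next, Lemma \ref{lem:half} applied to the pair $(L_m,L_n)$ gives $f_m=\phi+\psi m$ and $f'_m=\frac32\phi+\psi(m+s)$. The $M$-coefficient of the pair $(L_m,Y_n)$ then gives, as in Step 4 of Proposition \ref{prop:odd}, $k_n=\phi+2\psi n$ for $n\ne-s$ and $k'_r=2\phi+2\psi(r+s)$.

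The key new input is the pair $(L_m,N_n)$, whose $Y$-coefficient is
\[
-f_m+\Big(\frac m2-n-s\Big)q_n=-n\,f'_{m+n}.
\]
Put $n=0$: this gives $q_0(\frac m2-s)=f_m=\phi+\psi m$, so $q_0=2\psi$ and $\phi=-2\psi s$. Hence $\gamma=\phi+2\psi s=0$, and the $\Theta_s$-component disappears. This is consistent, since $\Theta_s$ is not a quasi-derivation of $\ESV(\varepsilon)$. The remaining values of $n$ should then force $q_n=2\psi$, matching $\ad_{Y_s}(N_n)=-Y_{n+s}$ with $w=-2\psi$. The pair $(N_m,Y_n)$ in the $M$-coefficient, $[q_mY_{m+s},Y_n]+[N_m,k_nM_{n+s}]=k'_{m+n}M$, gives $(m+s-n)q_m+2k_n=k'_{m+n}$. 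Evaluated at $n=-s$, this determines $k_{-s}$, so the central part $\zeta$ that survived in $\SV(\varepsilon)$ also vanishes, consistent with $Z(\ESV(\varepsilon))=0$ (Lemma \ref{lem:esvperfect}).

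Finally, with $w=-2\psi$ we compare $D$ with $w\ad_{Y_s}$ using \eqref{eq:ad} and \eqref{eq:adesv}, and conclude $D=w\ad_{Y_s}$ on all basis elements. The main obstacle is the bookkeeping in the first step. We must verify that the $N$-component $l_n$ of $D(Y_n)$ and the associated coefficients $l'$ really vanish, since the $N$-component of a bracket arises only from $[L,N]$. We plan to obtain this by taking $n=0$ (or two distinct values) in the $N$-coefficient of the pair $(L_m,Y_n)$, which reads $-(n+s)\,l_n=(\frac m2-n)l'_{m+n}$. Then we use the same argument as in Step 4 of Proposition \ref{prop:odd}, this time with the pair $(N_0,Y_{-s})$, to handle the single exceptional index $n=-s$.
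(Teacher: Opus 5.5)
Your overall plan is sound: the same ansatz, then Lemma \ref{lem:half} for $f_m$, Step 4 of Proposition \ref{prop:odd} for $k_n$, the pair $(L_m,N_0)$ to get $q_0=2\psi$ and $\phi=-2\psi s$, and the $M$-coefficient of $(N_m,Y_{-s})$ to fix $k_{-s}$. However, there is a genuine gap in how you treat the $N$-components $l_n$ of $D(Y_n)$.

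\textbf{The gap.} You dismiss the extra term $[l_mN_{m+s},M_n]=2l_mM_{m+n+s}$ as harmless, but the relation used in Step 1 of Proposition \ref{prop:odd} is exactly the $M$-coefficient of the pair $(Y_m,M_n)$. In $\ESV(\varepsilon)$ it therefore becomes $-nh_m+2l_m+(m-n-s)g_n=0$. At $n=0$ this reads $2l_m+(m-s)g_0=0$, which no longer yields $g_0=0$, so that step does not go through ``exactly''.

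Your other source for $l$, the $N$-coefficient $-(n+s)l_n=(\frac m2-n)l'_{m+n}$ of $(L_m,Y_n)$, only gives $l_n=0$ for $n\neq-s$. The pair $(N_0,Y_{-s})$ that you propose for the exceptional index cannot help. Since $[N_0,L_0]=[N_0,N_0]=0$, the coefficient $l_{-s}$ does not occur in it at all; its relations are just $h'_{-s}=l'_{-s}=0$ and $2sq_0+2k_{-s}=k'_{-s}$. In fact $N_0$ brackets nontrivially only with the $Y$'s and $M$'s, so $l_{-s}$ is visible only in the pairs $(Y_{-s},M_n)$ and $(Y_{-s},Y_n)$. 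Note also that the pair $(Y_m,Y_n)$ has no $N$-coefficient, since for odd $D$ both sides lie in $\Span{Y_k\mid k\in\varepsilon+\Z}$. As written, your plan never shows that $D(Y_{-s})$ has no $N_0$-component.

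\textbf{The fix.} Use the term you dismissed.
\begin{enumerate}
\item The pair $(L_m,M_0)$ still gives $(\frac m2-s)g_0=0$, hence $g_0=0$.
\item Then the $M$-coefficient of $(Y_m,M_0)$ gives $l_m=0$ for all $m$, including $m=-s$.
\item After that the relation reduces to \eqref{o6}, and Step 1 of Proposition \ref{prop:odd} gives $g=h=0$.
\end{enumerate}
The paper proceeds similarly. It first gets $g$ constant from $(M_m,N_n)$ and then $g=0$. Next it gets $h'=0$ from the $L$-coefficient of $(Y_m,N_n)$ and $h=0$ from the $L$-coefficient of $(L_m,Y_n)$. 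Finally it gets $l=0$ from \eqref{y6}.

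\textbf{Two smaller errors.}
\begin{itemize}
\item The $Y$-coefficient of $(L_m,N_n)$ is $-f_m+(\frac m2-n-s)q_n=-nq'_{m+n}$, not $-nf'_{m+n}$. With $f'$ the relation already fails for the derivation $\ad_{Y_s}$, so ``the remaining values of $n$'' would not force $q_n=2\psi$.
\item The clean way to control $q_n$ is the $Y$-coefficient of $(N_m,N_n)$, which gives $q_m=q_n$ directly. Then $(L_m,N_0)$ yields $f_m=(\frac m2-s)q$ outright, and Lemma \ref{lem:half} becomes unnecessary, as in the paper.
\end{itemize}
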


\begin{proof}
Since $D$ and $D'$ are odd of degree $s$, we can write
\[
D(L_m)=f_mY_{m+s},\quad D(M_m)=g_mY_{m+s},\quad D(N_m)=q_mY_{m+s},\]\[
D(Y_n)=h_nL_{n+s}+k_nM_{n+s}+l_nN_{n+s},
\]
and similarly for $D'$. As in the proof of Proposition \ref{prop:esveven}, we obtain the
following relations:
\begin{align}
&g_m=2g'_{m+n} && [(M_m,N_n),\ Y_{m+n+s}], \label{y1}\\
&\Big(\frac m2-n-s\Big)g_n=-ng'_{m+n} && [(L_m,M_n),\ Y_{m+n+s}], \label{y2}\\
&h'_{m+n}=0 && [(Y_m,N_n),\ L_{m+n+s}],
\label{y3_1}\\
& 2k_m+(n+s-m)q_n=k'_{m+n} && [(Y_m,N_n),\ M_{m+n+s}],
\label{y3_2}\\
&(m-n-s)h_n=\Big(\frac m2-n\Big)h'_{m+n} && [(L_m,Y_n),\ L_{m+n+s}], \label{y4}\\
&(m+s-n)f_m-(n+s)k_n=\Big(\frac m2-n\Big)k'_{m+n} && [(L_m,Y_n),\ M_{m+n+s}], \label{y5}\\
&-nh_m+2l_m+(m-n-s)g_n=0 && [(Y_m,M_n),\ M_{m+n+s}], \label{y6}\\
&q_m=q_n && [(N_m,N_n),\ Y_{m+n+s}], \label{y7}\\
&-f_m+\Big(\frac m2-n-s\Big)q_n=-nq'_{m+n} && [(L_m,N_n),\ Y_{m+n+s}]. \label{y8}
\end{align}

\smallskip
\begin{enumerate}
    \item[\bf{Step 1.}] By \eqref{y1} for the pairs $(m,n)$ and $(m+n,0)$, we get
$g_m=2g'_{m+n}=g_{m+n}$, so $g_m=g_0$ for all $m$. Now \eqref{y2} with $n=0$ gives
$(\frac m2-s)g_0=0$ for all $m\in\Z$, hence $g_m=0$ for all $m\in\Z$.

\smallskip
\item[\bf{Step 2.}] By \eqref{y3_1}, $h'=0$; then \eqref{y4} gives $(m-n-s)h_n=0$ for all $m\in\Z$,
hence $h_n=0$ for all $n$. Now \eqref{y6} gives $l_m=0$ for all $m$.

\smallskip
\item[\bf{Step 3.}] By \eqref{y7}, there is $q\in\C$ with $q_m=q$ for all $m$, and \eqref{y8} with
$n=0$ gives $f_m=(\frac m2-s)q$.

\smallskip
\item[\bf{Step 4.}] Comparing the relation \eqref{y3_2} for the pairs $(m+1,n)$ and
$(m,n+1)$, we get $k_{m+1}-k_m=q$, so there is $\rho\in\C$ with $k_m=\rho+qm$ for all
$m\in\varepsilon+\Z$. Now take $m=2n\in\Z$ in \eqref{y5}: we get
$(n+s)(f_{2n}-k_n)=0$, where $f_{2n}-k_n=(n-s)q-\rho-qn=-(\rho+qs)$. Choosing
$n\ne-s$, we obtain $\rho=-qs$, i.e., $k_n=q(n-s)$.

\smallskip
\item[\bf{Step 5.}] Let $w=-q$. By \eqref{eq:ad} and \eqref{eq:adesv},
\begin{center}$w\ad_{Y_s}(L_m)=-q(s-\frac m2)Y_{m+s}=f_mY_{m+s}$,
$w\ad_{Y_s}(Y_n)=-q(s-n)M_{n+s}=k_nM_{n+s}$, $w\ad_{Y_s}(N_m)=qY_{m+s}=q_mY_{m+s}$ and
$w\ad_{Y_s}(M_m)=0$.\end{center} By Steps 1--4, $D=w\ad_{Y_s}$.
\end{enumerate}
\end{proof}

\begin{theorem}\label{thm:esvmain}
A linear map $D$ of $\ESV(\varepsilon)$ is a quasi-derivation if and only if there are
$x\in\ESV(\varepsilon)$ and $\lambda\in\C$ such that
$D=\ad_x+\lambda\Id.$
The element $x$ and the number $\lambda$ are uniquely determined by $D$, and the map
associated with $D$ is $D'=\ad_x+2\lambda\Id$. In other words,
\[
\QDer\big(\ESV(\varepsilon)\big)=\Inn\big(\ESV(\varepsilon)\big)\oplus\Span{\Id}.
\]
\end{theorem}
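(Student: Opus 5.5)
The plan mirrors the proof of Theorem \ref{thm:main}, but uses the stronger rigidity from Propositions \ref{prop:esveven} and \ref{prop:esvodd}.

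The \emph{if} direction is immediate. Every inner derivation $\ad_x$ is a quasi-derivation with associated map $\ad_x$. The identity map is a quasi-derivation with associated map $2\Id$, since $[x,y]+[x,y]=2[x,y]$. Sums behave linearly, so $D'=\ad_x+2\lambda\Id$ is associated with $D=\ad_x+\lambda\Id$. By Lemmas \ref{lem:unique} and \ref{lem:esvperfect} it is the unique associated map.

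For the converse, let $D$ be a quasi-derivation and decompose it into homogeneous components with respect to the $G$-grading. Each component is a quasi-derivation by Lemma \ref{lem:grading}.
\begin{itemize}
\item By Proposition \ref{prop:esveven}, the even component of degree $j$ equals $u_j\ad_{L_j}+v_j\ad_{M_j}+w_j\ad_{N_j}+\lambda\Id$, with the $\lambda\Id$ term present only for $j=0$.
\item By Proposition \ref{prop:esvodd}, the odd component of degree $s$ equals $w'_s\ad_{Y_s}$.
\end{itemize}

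The main step is finiteness: only finitely many of $u_j,v_j,w_j,w'_s$ are nonzero. As in the proof of Theorem \ref{thm:main}, I will read off coefficients of the finitely many vectors $D(L_0)$, $D(L_1)$, $D(N_0)$, $D(M_0)$ using \eqref{eq:ad} and \eqref{eq:adesv}.
\begin{itemize}
\item The coefficient of $L_{m+j}$ in $D(L_m)$ is $u_j(j-m)$ for $j\neq0$. Taking $m=0$ gives $u_j=0$ for almost all $j$.
\item The coefficient of $M_{j}$ in $D(N_0)$ is $-2v_j$, so $v_j=0$ for almost all $j$.
\item The coefficient of $N_{j}$ in $D(L_0)$ is $u_j\cdot 0+w_j j$, so $w_j=0$ for almost all $j\ne0$. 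Alternatively, the coefficient of $M_j$ in $D(M_0)$ is $2w_j$ when $j\neq0$ (the $u_j$ term vanishes since $m=0$).
\item The coefficient of $Y_s$ in $D(N_0)$ is $-w'_s$, so $w'_s=0$ for almost all $s$.
\end{itemize}
Hence $x=\sum_j(u_jL_j+v_jM_j+w_jN_j)+\sum_s w'_sY_s$ is a genuine element of $\ESV(\varepsilon)$. Then $D$ and $\ad_x+\lambda\Id$ agree on every homogeneous component, so they are equal.

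For uniqueness, suppose $\ad_x+\lambda\Id=0$. The associated map of the left side is $\ad_x+2\lambda\Id$, and it must be $0$ by uniqueness of associated maps. Subtracting gives $\lambda\Id=0$, so $\lambda=0$. Then $\ad_x=0$, so $x\in Z(\ESV(\varepsilon))=0$ by Lemma \ref{lem:esvperfect}. The direct sum decomposition $\QDer(\ESV(\varepsilon))=\Inn(\ESV(\varepsilon))\oplus\Span{\Id}$ follows.

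The only delicate point is the finiteness bookkeeping. The coefficients must be isolated correctly, which is why evaluation at $N_0$ and $M_0$ is used: these evaluations separate the $v_j$, $w_j$ and $w'_s$ from the other coefficients cleanly. Everything else follows directly from the two propositions.
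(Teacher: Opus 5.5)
Your proof is correct and follows essentially the paper's route: you assemble the homogeneous components from Propositions \ref{prop:esveven} and \ref{prop:esvodd} and prove finiteness by reading off coefficients of finitely many values of $D$. The differences are minor and both versions are valid: for $u_j$ you use $D(L_0)$ at $L_j$ where the paper uses $D(N_1)$ at $N_{1+j}$, and for uniqueness you invoke the uniqueness of associated maps (as in the proof of Theorem \ref{thm:main}), whereas the paper compares the coefficients at $N_1$ and $N_2$ in $[x,N_1]+\lambda N_1=0$ and $[x,N_2]+\lambda N_2=0$.
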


\begin{proof}
Clearly, $\ad_x+\lambda\Id$ is a quasi-derivation with associated map
$\ad_x+2\lambda\Id$. Conversely, let $D$ be a quasi-derivation. By Propositions
\ref{prop:esveven} and \ref{prop:esvodd}, there are complex numbers $u_j,v_j,w_j$
($j\in\Z$), $\lambda$ and $w'_s$ ($s\in\varepsilon+\Z$) such that the even component of degree
$j$ of $D$ equals \begin{center}$u_j\ad_{L_j}+v_j\ad_{M_j}+w_j\ad_{N_j}$ for $j\ne0$ and
$u_0\ad_{L_0}+v_0\ad_{M_0}+w_0\ad_{N_0}+\lambda\Id$ for $j=0$,\end{center} and the odd component of degree
$s$ of $D$ equals $w'_s\ad_{Y_s}$. By \eqref{eq:ad} and \eqref{eq:adesv}, for $j\ne0$ the
coefficient of $D(N_1)$ at $N_{1+j}$ equals $-u_j$, the coefficient of $D(N_0)$ at $M_j$
equals $-2v_j$ and the coefficient of $D(M_0)$ at $M_j$ equals $2w_j$; moreover, the
coefficient of $D(N_0)$ at $Y_s$ equals $-w'_s$. Since $D(N_0)$, $D(N_1)$ and $D(M_0)$ are
finite linear combinations of basis elements, only finitely many of the numbers
$u_j,v_j,w_j,w'_s$ are nonzero. Hence
$x=\sum\limits_{j\in\Z}(u_jL_j+v_jM_j+w_jN_j)+\sum\limits_{s\in\varepsilon+\Z}w'_sY_s$ is an element of
$\ESV(\varepsilon)$, and $D$ and $\ad_x+\lambda\Id$ have the same homogeneous components, so
$D=\ad_x+\lambda\Id$.

Let us prove the uniqueness. Assume that $\ad_x+\lambda\Id=0$. Let $\xi$ be the coefficient
of $x$ at $L_0$. Comparing the coefficients at $N_1$ in $[x,N_1]+\lambda N_1=0$ and at $N_2$ in
$[x,N_2]+\lambda N_2=0$, we get $-\xi+\lambda=0$ and $-2\xi+\lambda=0$, so $\lambda=0$. Then
$\ad_x=0$, and $x=0$ by Lemma \ref{lem:esvperfect}. The formula for $D'$ follows from the
uniqueness of associated maps.
\end{proof}

\subsection{Corollaries}

\begin{corollary}\label{cor:esvder}
Let $\varepsilon\in\{0,\frac12\}$. Then
\begin{enumerate}[{\rm (1)}]
\item $\Der\big(\ESV(\varepsilon)\big)=\Inn\big(\ESV(\varepsilon)\big)$, i.e., all derivations of
$\ESV(\varepsilon)$ are inner (for the original extended Schr\"odinger--Witt algebra
$\ESV(\frac12)$ see \cite{GJP09}).
\item $\QDer\big(\ESV(\varepsilon)\big)=\Der\big(\ESV(\varepsilon)\big)\oplus\Delta_{\frac{1}{2}}\big(\ESV(\varepsilon)\big)$.
\end{enumerate}
\end{corollary}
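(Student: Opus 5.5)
Both parts will be read off from Theorem \ref{thm:esvmain}, which says that every quasi-derivation of $\ESV(\varepsilon)$ has the unique form $D=\ad_x+\lambda\Id$ and that its associated map is $D'=\ad_x+2\lambda\Id$. We also use Lemma \ref{lem:unique} together with Lemma \ref{lem:esvperfect}: since $\ESV(\varepsilon)$ is perfect, associated maps are unique, and a quasi-derivation $D$ is a derivation exactly when $D'=D$.

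For (1), let $d$ be a derivation. Every derivation is a quasi-derivation with associated map $d'=d$, so Theorem \ref{thm:esvmain} gives $d=\ad_x+\lambda\Id$ with $d'=\ad_x+2\lambda\Id$. Comparing $d'=d$ yields $\lambda\Id=0$, hence $\lambda=0$ and $d=\ad_x$ is inner. The reverse inclusion is clear.

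For (2), the first step is to show $\Delta_{\frac12}(\ESV(\varepsilon))=\Span{\Id}$. Clearly $\Id$ is a $\frac12$-derivation, since $\Id([x,y])=\frac12([x,y]+[x,y])$. Conversely, a $\frac12$-derivation $\varphi$ is a quasi-derivation with $\varphi'=2\varphi$. Writing $\varphi=\ad_x+\lambda\Id$, uniqueness of the associated map gives $\ad_x+2\lambda\Id=2\ad_x+2\lambda\Id$, so $\ad_x=0$ and therefore $x=0$ because $Z(\ESV(\varepsilon))=0$.

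The second step is the decomposition. Theorem \ref{thm:esvmain} gives $\QDer=\Inn\oplus\Span{\Id}$, part (1) gives $\Inn=\Der$, and the first step gives $\Span{\Id}=\Delta_{\frac12}$. Hence $\QDer=\Der\oplus\Delta_{\frac12}$. The sum is direct because of the uniqueness of $(x,\lambda)$ in Theorem \ref{thm:esvmain}.

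There is no real obstacle: all the work is already contained in Theorem \ref{thm:esvmain}, and the only point to handle carefully is using uniqueness of the associated map to compare coefficients of $\Id$.
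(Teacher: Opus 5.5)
Your proof is correct and follows the paper's argument. Part (1) is exactly the comparison $D=D'$ using Theorem \ref{thm:esvmain} and Lemma \ref{lem:unique}. For part (2) you inline the $\delta=\frac12$ case of the argument of Corollary \ref{cor:esvdelta}, which the paper cites instead, so you avoid its forward reference.
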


\begin{proof}
By Theorem \ref{thm:esvmain} and Lemma \ref{lem:unique}, a quasi-derivation
$D=\ad_x+\lambda\Id$ is a derivation if and only if $\ad_x+\lambda\Id=D=D'=\ad_x+2\lambda\Id$,
i.e., if and only if $\lambda=0$. This proves the first statement. The second statement
follows from the first one, Theorem \ref{thm:esvmain} and Corollary \ref{cor:esvdelta} below
(for $\delta=\frac12$).
\end{proof}

\begin{corollary}\label{cor:esvdelta}
Let $\varepsilon\in\{0,\frac12\}$ and $\delta\in\C$. Then
\[
\Delta_\delta\big(\ESV(\varepsilon)\big)=
\begin{cases}
\Inn\big(\ESV(\varepsilon)\big), & \delta=1,\\
\Span{\Id}, & \delta=\frac12,\\
0, & \delta\notin \{\frac12,1\}.
\end{cases}
\]
In particular, every $\frac12$-derivation of $\ESV(\varepsilon)$ is trivial (for the original
extended Schr\"odinger--Witt algebra $\ESV(\frac12)$ cf. \cite{YH22}).
\end{corollary}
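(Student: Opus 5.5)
The proof rests on Theorem \ref{thm:esvmain} and on the uniqueness of associated maps (Lemmas \ref{lem:unique} and \ref{lem:esvperfect}). The case $\delta=1$ is Corollary \ref{cor:esvder}(1). If $\delta=0$, a $0$-derivation vanishes on $[\ESV(\varepsilon),\ESV(\varepsilon)]$, which is all of $\ESV(\varepsilon)$ by Lemma \ref{lem:esvperfect}, so it is zero.

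For $\delta\notin\{0,1\}$, put $c=\delta^{-1}\neq 1$. A linear map $\varphi$ is a $\delta$-derivation exactly when it is a quasi-derivation with associated map $c\varphi$. By Theorem \ref{thm:esvmain} we can write $\varphi=\ad_x+\lambda\Id$, and then its associated map is $\ad_x+2\lambda\Id$. Since the associated map is unique, the condition becomes
\[
\ad_x+2\lambda\Id=c\,\ad_x+c\lambda\Id,\qquad\text{i.e.}\qquad (1-c)\ad_x+(2-c)\lambda\Id=0 .
\]
By the uniqueness part of Theorem \ref{thm:esvmain}, the representation $\ad_{x'}+\lambda'\Id=0$ forces $x'=0$ and $\lambda'=0$. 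Applying this with $x'=(1-c)x$ and $\lambda'=(2-c)\lambda$ gives $(1-c)x=0$ and $(2-c)\lambda=0$. Because $c\neq1$, we get $x=0$. Moreover $\lambda=0$ unless $c=2$, that is, unless $\delta=\frac12$.

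Hence $\Delta_\delta=0$ for $\delta\notin\{\frac12,1\}$, and $\Delta_{\frac12}\subseteq\Span{\Id}$. The reverse inclusion holds because $\Id$ is clearly a $\frac12$-derivation: $\Id([x,y])=\frac12\big([x,y]+[x,y]\big)$.

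No step is genuinely difficult. The only point needing care is to invoke the uniqueness statement of Theorem \ref{thm:esvmain} rather than to compare coefficients by hand.
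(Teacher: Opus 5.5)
Your proposal is correct and follows essentially the same argument as the paper: for $\delta\neq 0$ you rewrite the condition as $\varphi'=c\varphi$, write $\varphi=\ad_x+\lambda\Id$ by Theorem~\ref{thm:esvmain}, and use its uniqueness part to get $(1-c)x=0$ and $(2-c)\lambda=0$. The only cosmetic difference is that you settle $\delta=1$ by citing Corollary~\ref{cor:esvder}(1), while the paper handles it in the same computation ($c=1$ forces $\lambda=0$). This causes no circularity, because part (1) of that corollary does not depend on the present one.
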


\begin{proof}
If $\delta=0$, then a $0$-derivation vanishes on
$[\ESV(\varepsilon),\ESV(\varepsilon)]=\ESV(\varepsilon)$ (Lemma \ref{lem:esvperfect}). Let
$\delta\neq0$ and put $c=\delta^{-1}$. A $\delta$-derivation $\varphi$ is a quasi-derivation
with $\varphi'=c\varphi$. By Theorem \ref{thm:esvmain}, $\varphi=\ad_x+\lambda\Id$ and
$\ad_x+2\lambda\Id=\varphi'=c\ad_x+c\lambda\Id$, i.e.,
$\ad_{(1-c)x}+(2-c)\lambda\Id=0$. By the uniqueness part of Theorem \ref{thm:esvmain},
$(1-c)x=0$ and $(2-c)\lambda=0$. If $\delta=1$, then $\lambda=0$ and $\varphi=\ad_x$; if
$\delta=\frac12$, then $x=0$ and $\varphi=\lambda\Id$; otherwise $x=0$ and $\lambda=0$.
Conversely, inner derivations are $1$-derivations and $\Id$ is a $\frac12$-derivation.
\end{proof}

To describe the biderivations of $\ESV(\varepsilon)$ we need the following lemma on its
quasicentroid (in the sense of \cite{LL00}).

\begin{lemma}\label{lem:qc}
Let $T$ be a linear map of $\ESV(\varepsilon)$ such that $[T(x),y]=[x,T(y)]$ for all
$x,y\in\ESV(\varepsilon)$. Then $T=\beta\Id$ for some $\beta\in\C$.
\end{lemma}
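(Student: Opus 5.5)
The plan is to reduce to homogeneous components and then pin each component down by testing the identity against $L_0$. Write $\mathcal{L}=\ESV(\varepsilon)$. The condition $[T(x),y]=[x,T(y)]$ is bilinear in $x,y$, and the grading by $G=\frac12\Z\times\Z_2$ is compatible with the bracket. So, exactly as in Lemma \ref{lem:grading}, every homogeneous component $T_g$ satisfies the same identity: take $x,y$ homogeneous and compare the components of degree $\deg x+\deg y+g$ on both sides. It therefore suffices to show that an even component of degree $j\neq0$ and every odd component of degree $s$ vanish, and that the even component of degree $0$ is a scalar multiple of $\Id$.

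\textbf{Step 1: reduction to $T(L_0)$.} Let $T$ be homogeneous of degree $g=(j,\cdot)$. For every homogeneous basis element $x_m$ of index $m$ (that is, $L_m,M_m,N_m$ or $Y_m$), the image $T(x_m)$ has index $m+j$. Since $[z,L_0]=kz$ for every $z$ of index $k$, taking $y=L_0$ gives
\[
(m+j)\,T(x_m)=[x_m,T(L_0)] .
\]
Consequently $T$ is completely determined by $T(L_0)$ on all basis elements of index $m\neq-j$. Write $T(L_0)=aL_j+bM_j+cN_j$ in the even case and $T(L_0)=eY_s$ in the odd case.

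\textbf{Step 2: forcing $T(L_0)$.} Substitute the resulting formulas $T(x_m)=\frac1{m+j}[x_m,T(L_0)]$ back into the identity for a few further pairs of basis elements with indices away from $-j$.

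In the even case, use the pair $(N_m,N_n)$: here $[N_m,N_n]=0$, so $[T(N_m),N_n]=[N_m,T(N_n)]$. Compute $T(N_m)=\frac{1}{m+j}[N_m,aL_j+bM_j]=\frac{1}{m+j}\big(-(j)\,\cdot\, aN_{m+j}\cdot(-1)\ldots\big)$ from \eqref{eq:adesv}, keeping only the shape of the answer. Comparing the $N$-coefficients should force $a=0$ when $j\neq0$, and comparing the $M$-coefficients should force $b=0$. Next, use the pair $(Y_m,Y_n)$ or $(L_m,Y_n)$, where $[N_j,Y_n]=Y_{n+j}$ enters; this should force $c=0$ when $j\neq0$. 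When $j=0$, the same substitutions should instead give $b=c=0$ and show that $T$ acts on each basis element of index $m\neq0$ by the scalar $a$. In that case the $L_0$-relation is vacuous, and the identity only says that $T$ commutes with $\ad_{L_0}$.

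In the odd case, use $(L_m,L_n)$ and $(N_m,N_n)$: $T(N_m)$ is a multiple of $[N_m,Y_s]=Y_{m+s}$, and comparing both sides should give $e=0$.

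\textbf{Step 3: the exceptional index $-j$.} It remains to handle $T$ on the basis elements of index $-j$; for $j=0$ these are $L_0,M_0,N_0$, and for odd components with $\varepsilon=0$ this includes $Y_0$. Here I would test the identity against $L_1$ instead of $L_0$ (also against $N_1$, or $Y$'s), using that the values of $T$ at index $1-j$ are already known. From $[T(x_{-j}),L_1]=[x_{-j},T(L_1)]$, together with a second pair such as $N_1$ to kill any kernel of $\ad_{L_1}$, I expect to obtain $T(x_{-j})=0$ for $j\neq0$ and $T(x_0)=ax_0$ for $j=0$.

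Summing the components then gives $T=a\Id$. This also settles finiteness, since only the degree-$0$ component survives. The main obstacle I expect is Step 3: the kernel of $\ad_{L_1}$ restricted to the relevant degrees must be controlled by a second test element. The coefficient bookkeeping in Step 2 is routine by comparison, though it is long.
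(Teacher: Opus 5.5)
Your route works, and it is genuinely different from the paper's. The paper writes out all coefficient sequences of a homogeneous component: ten in the even case, six in the odd case. It then compares coefficients for pairs built mostly from $N$'s, such as $(N_m,N_n)$, $(L_m,N_n)$, $(M_m,N_n)$, $(Y_m,N_n)$ and $(M_m,Y_n)$. These relations, evaluated at fixed small parameters like $n=0,1$, hold for all $m$, so no basis element needs separate treatment; the cost is heavier bookkeeping. You instead use that $\ad_{L_0}$ acts on index $k$ by $-k$. Then $(m+j)T(x_m)=[x_m,T(L_0)]$ reduces everything off the index $-j$ to the few coefficients of $T(L_0)$, and you pay with a separate analysis at index $-j$.

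Your Step 2 is easier than you indicate:
\begin{itemize}
\item Taking $x_m=L_0$ in your formula gives $jT(L_0)=-jT(L_0)$, so $T(L_0)=0$ for $j\neq0$. Likewise $e=0$ for $s\neq0$ in the odd case.
\item For $j=0$ the relation at index $0$ is not vacuous: $0=[M_0,T(L_0)]=-2cM_0$ and $0=[N_0,T(L_0)]=2bM_0$. Hence $T=a\Id$ off index $0$.
\item For $s=0$, $0=[N_0,eY_0]=eY_0$ gives $e=0$.
\end{itemize}

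Some specific claims in your sketch are wrong and must be repaired.
\begin{itemize}
\item The pair $(N_m,N_n)$ cannot give $a=0$. Since $T(N_m)=\frac{1}{m+j}(amN_{m+j}+2bM_{m+j})$ and $[N_m,N_n]=0$, that identity has no $N$-component at all. It yields only $-\frac{4b}{m+j}=\frac{4b}{n+j}$, i.e.\ $b=0$. For $j\neq0$, $a=0$ comes instead from $(L_m,Y_n)$ or from the observation above.
\item In Step 3, testing against $L_1$ and $N_1$ is not enough. $N_0$ commutes with every $L_k$ and every $N_k$, so an $N_0$-component of $T(x_{-j})$ is invisible to both. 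You need a third test element such as $Y_n$ or $M_k$, using $[N_0,Y_n]=Y_n$.
\item For $j=-1$, the elements $L_1$ and $N_1$ themselves have the exceptional index. Take test elements whose index is $\neq-j$.
\item For $\varepsilon=0$, the exceptional index of an even component also contains $Y_{-j}$ (for example $Y_0$ when $j=0$), which you omitted. It is handled via $[Y_0,L_k]=-\frac k2Y_k$.
\end{itemize}
With these repairs your argument is complete.
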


\begin{proof}
The identity $[T(x),y]=[x,T(y)]$ is homogeneous, so, as in Lemma \ref{lem:grading}, every
homogeneous component of $T$ satisfies it. Let $T$ be even of degree $j$, and write
\begin{center}$T(L_m)=a_mL_{m+j}+b_mM_{m+j}+r_mN_{m+j}$, $T(Y_n)=c_nY_{n+j}$,\\
$T(M_m)=d_mL_{m+j}+e_mM_{m+j}+p_mN_{m+j}$, $T(N_m)=x_mL_{m+j}+y_mM_{m+j}+z_mN_{m+j}$.\end{center}
Comparing the coefficients at the basis elements indicated in brackets in
$[T(X),Z]=[X,T(Z)]$ for the indicated pairs $(X,Z)$ of basis elements, we get
\[
{\small\begin{aligned}
&nx_m+mx_n=0 &&[(N_m,N_n),\ N_{m+n+j}], && y_m=0 &&[(N_m,N_m),\ M_{2m+j}],\\
&na_m=(n+j)z_n &&[(L_m,N_n),\ N_{m+n+j}], && 2b_m=jy_0 &&[(L_m,N_0),\ M_{m+j}],\\
&2r_m=na_m-(n+j)e_n &&[(L_m,M_n),\ M_{m+n+j}], && e_m-z_n=-\tfrac m2x_n &&[(M_m,N_n),\ M_{m+n+j}],\\
&c_m-z_n=\Big(\frac{n+j}2-m\Big)x_n &&[(Y_m,N_n),\ Y_{m+n+j}], && \Big(\frac{m+j}2-n\Big)d_m+p_m=0 &&[(M_m,Y_n),\ Y_{m+n+j}].
\end{aligned}}
\]
The first relation with $m=n=1$ and then with $n=1$ gives $x_m=0$ for all $m$. Hence
$e_m=z_n=c_k$ for all $m,n\in\Z$, $k\in\varepsilon+\Z$; we denote this number by $\beta_j$.
Moreover, $b_m=0$; the third relation with $n=0$ and $n=1$ gives $j\beta_j=0$ and
$a_m=(1+j)\beta_j=\beta_j$; the fifth relation with $n=0$ gives $2r_m=-j\beta_j=0$; and the last
relation for two different values of $n$ gives $d_m=p_m=0$. Thus $T=\beta_j\Id$, and
$\beta_j=0$ if $j\neq0$.
Let $T$ be odd of degree $s$, and write \begin{center}$T(L_m)=f_mY_{m+s}$, $T(M_m)=g_mY_{m+s}$,
$T(N_m)=q_mY_{m+s}$, $T(Y_n)=h_nL_{n+s}+k_nM_{n+s}+l_nN_{n+s}$.\end{center} In the same way, the pairs
\begin{center}$(N_m,N_m)$, $(L_m,N_0)$, $(M_m,N_0)$, $(Y_n,N_0)$, $(Y_n,N_1)$ and $(Y_n,M_0)$,\end{center} compared at
$Y_{2m+s}$, $Y_{m+s}$, $Y_{m+s}$, $M_{n+s}$, $N_{n+s+1}$ and $M_{n+s}$, respectively, give
successively $q_m=0$, $f_m=0$, $g_m=0$, $k_n=0$, $h_n=0$ and $l_n=0$. Hence $T=0$. Therefore
$T$ is equal to its even component of degree $0$, which is a multiple of $\Id$.
\end{proof}

\begin{corollary}\label{thm:esvbider}
Let $\varepsilon\in\{0,\frac12\}$ and $\delta\in\C$.
\begin{enumerate}[{\rm (1)}]
\item If $\delta\ne1$, then every $\delta$-biderivation of $\ESV(\varepsilon)$ is zero.
\item A bilinear map $\phi$ of $\ESV(\varepsilon)$ is a biderivation if and only if there is
$\beta\in\C$ such that $\phi(x,y)=\beta[x,y]$ for all $x,y\in\ESV(\varepsilon)$.
\end{enumerate}
\end{corollary}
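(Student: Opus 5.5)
The plan is to reduce everything to the one-variable results already available: Corollary \ref{cor:esvdelta} for $\delta$-derivations and Lemma \ref{lem:qc} for the quasicentroid. Let $\phi$ be a $\delta$-biderivation. For each fixed $z$, the maps $x\mapsto\phi(x,z)$ and $x\mapsto\phi(z,x)$ are $\delta$-derivations of $\ESV(\varepsilon)$.

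\emph{The case $\delta\neq1$.} If $\delta\notin\{\frac12,1\}$, Corollary \ref{cor:esvdelta} says both partial maps vanish, so $\phi=0$. If $\delta=\frac12$, the same corollary gives functions $\lambda,\mu:\ESV(\varepsilon)\to\C$ with
\[
\phi(x,y)=\lambda(y)x=\mu(x)y \quad \text{for all } x,y.
\]
Now repeat the argument of Corollary \ref{cor:bider}. Given $y$, pick a basis element $P$ not occurring in $y$ (possible since the basis is infinite). Then $\lambda(y)P=\mu(P)y$, and comparing coefficients at $P$ gives $\lambda(y)=0$. Hence $\phi=0$.

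\emph{The case $\delta=1$.} By Corollary \ref{cor:esvder}, every derivation is inner. So for each $z$ there are elements $\psi(z)$ and $\theta(z)$, unique because the centre is trivial (Lemma \ref{lem:esvperfect}), with
\[
\phi(x,z)=[\psi(z),x], \qquad \phi(z,x)=[\theta(z),x].
\]
Uniqueness also makes $\psi$ and $\theta$ linear. Equating the two descriptions of $\phi(x,y)$ gives $[\psi(y),x]=[\theta(x),y]$ for all $x,y$.

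Setting $x=y$ gives $[\psi(x)+\theta(x),x]=0$. This is a commuting-map condition, and it is not of the form required by Lemma \ref{lem:qc}, so it needs the following trick. Consider the map $T=-\theta$. Then
\[
[T(x),y]=-[\theta(x),y]=-[\psi(y),x]=[x,\psi(y)].
\]
This is not yet $[x,T(y)]$. To fix this, antisymmetrize: swapping the roles of $x$ and $y$ gives $[\psi(x),y]=[\theta(y),x]$. The main step is then to show that $\psi=-\theta$.

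\emph{Main obstacle: proving $\psi=-\theta$.} I would do this with the standard biderivation trick. Expand $\phi([x,u],[y,v])$ in two ways, once using the first argument first and once using the second argument first. Subtracting the two expansions yields
\[
[[x,u],[\phi(x',y'),\,\cdot\,]]\text{-type identities}
\]
of the form $[\phi(x,y),[u,v]]=[[x,u],\phi(y,v)]$. If that route becomes messy, an alternative is a direct graded check on basis elements, analogous to the proof of Lemma \ref{lem:qc}.

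\emph{Conclusion.} Once $\psi=-\theta$ is established, the relation $[\psi(y),x]=[\theta(x),y]$ becomes $[\theta(x),y]=[x,\theta(y)]$. Lemma \ref{lem:qc} then gives $\theta=\beta\Id$, hence $\phi(x,y)=[\theta(x),y]=\beta[x,y]$.

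For the converse, $\beta[x,y]$ is a biderivation by the Jacobi identity. This completes both parts of the statement.
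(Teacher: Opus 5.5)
\textbf{Verdict: there is a genuine gap.} Your handling of $\delta\neq1$, the construction of $\psi,\theta$, and the closing appeal to Lemma~\ref{lem:qc} all coincide with the paper. However, the step you single out as the main obstacle, $\psi=-\theta$, is left unproved, and the route you sketch for it does not work.

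The identity $[\phi(x,y),[u,v]]=[[x,u],\phi(y,v)]$ cannot come from expanding $\phi([x,u],[y,v])$ in two ways, because it fails for the biderivation $\phi=[\cdot,\cdot]$. Take $x=L_2$, $y=L_0$, $u=L_{-1}$, $v=L_1$: the left side is $[2L_2,-2L_0]=-8L_2$, while the right side is $[3L_1,-L_1]=0$. Your alternative, a ``direct graded check'', is only mentioned, not carried out. There is also a smaller slip: putting $x=y$ in $[\psi(y),x]=[\theta(x),y]$ gives $[\psi(x)-\theta(x),x]=0$, not $[\psi(x)+\theta(x),x]=0$. You never use this, so it is harmless.

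The missing idea is already contained in the two relations you wrote down. Adding $[\theta(x),y]=[\psi(y),x]$ and $[\psi(x),y]=[\theta(y),x]$ gives
\[
[(\psi+\theta)(x),y]+[x,(\psi+\theta)(y)]=0\qquad\text{for all }x,y\in\ESV(\varepsilon).
\]
So $\psi+\theta$ is a quasi-derivation whose associated map is $0$. By Theorem~\ref{thm:esvmain}, $\psi+\theta=\ad_w+\mu\Id$ with associated map $\ad_w+2\mu\Id=0$. The uniqueness part of that theorem then forces $w=0$ and $\mu=0$, so $\psi=-\theta$.

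This is exactly where the paper's main result on quasi-derivations enters. With this line inserted, your argument becomes the paper's proof.
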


\begin{proof}
The first statement is proved in the same way as the first part of Corollary \ref{cor:bider},
using Corollary \ref{cor:esvdelta} instead of Corollary \ref{cor:delta}. Let $\phi$ be a
biderivation. By Corollary \ref{cor:esvder}, for every $z$ there are $a(z),b(z)\in\ESV(\varepsilon)$
such that $\phi(x,z)=[a(z),x]$ and $\phi(z,x)=[b(z),x]$ for all $x$. Since
$Z(\ESV(\varepsilon))=0$, the elements $a(z),b(z)$ are uniquely determined, and the maps $a$ and
$b$ are linear. For all $x,y$ we have
\[
[(a+b)(x),y]+[x,(a+b)(y)]=\phi(y,x)+\phi(x,y)-\phi(x,y)-\phi(y,x)=0,
\]
so $a+b$ is a quasi-derivation with associated map $0$. By Theorem \ref{thm:esvmain},
$a+b=\ad_w+\mu\Id$ with $\ad_w+2\mu\Id=0$, whence $w=0$, $\mu=0$ and $a=-b$. Therefore
$[b(x),y]=\phi(x,y)=[a(y),x]=[x,b(y)]$ for all $x,y$, and $b=\beta\Id$ by Lemma \ref{lem:qc}.
Hence $\phi(x,y)=\beta[x,y]$. Conversely, $\beta[x,y]$ is a biderivation by the Jacobi identity.
\end{proof}

\begin{corollary}\label{cor:esvtp}
Let $\varepsilon\in\{0,\frac12\}$. For every $\delta\in\C$, all transposed $\delta$-Poisson
structures on $\ESV(\varepsilon)$ are trivial. In particular, all transposed Poisson structures
on $\ESV(\varepsilon)$ are trivial (for the original extended Schr\"odinger--Witt algebra
$\ESV(\frac12)$ cf. \cite{YH22}).
\end{corollary}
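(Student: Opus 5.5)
The plan is to reduce to biderivations via Lemma \ref{lem:tpbider} and then use Corollary \ref{thm:esvbider}. Let $\cdot$ be a transposed $\delta$-Poisson structure on $\ESV(\varepsilon)$. By Lemma \ref{lem:tpbider}, the map $\phi(x,y)=x\cdot y$ is a $\delta$-biderivation of $\ESV(\varepsilon)$. We then split into the cases $\delta\neq1$ and $\delta=1$.

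\emph{Case $\delta\neq1$.} Part (1) of Corollary \ref{thm:esvbider} gives $\phi=0$ immediately. Hence $x\cdot y=0$ for all $x,y$, so the structure is trivial.

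\emph{Case $\delta=1$.} Part (2) of Corollary \ref{thm:esvbider} gives some $\beta\in\C$ with $x\cdot y=\beta[x,y]$ for all $x,y$. Since $\cdot$ is commutative and the bracket is anticommutative, we get $\beta[x,y]=\beta[y,x]=-\beta[x,y]$, so $2\beta[x,y]=0$ for all $x,y$. Taking $x=L_1$ and $y=L_0$ gives $2\beta L_1=0$, hence $\beta=0$ and the product vanishes. Associativity is not even needed.

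The case $\delta=\frac12$ is covered by the first case, which yields the statement about transposed Poisson structures. There is no real obstacle here; the only point requiring care is the $\delta=1$ case, where one must use commutativity to kill the multiple of the bracket, exactly as in the proof of Corollary \ref{cor:tp}.
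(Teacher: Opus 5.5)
Your proposal is correct and matches the paper's proof: you reduce to a $\delta$-biderivation via Lemma~\ref{lem:tpbider}, apply Corollary~\ref{thm:esvbider}, and in the case $\delta=1$ use commutativity together with $[L_1,L_0]=L_1$ to force $\beta=0$.
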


\begin{proof}
By Lemma \ref{lem:tpbider}, $\phi(x,y)=x\cdot y$ is a $\delta$-biderivation. If $\delta\neq1$, then
$\phi=0$ by Corollary \ref{thm:esvbider}. If $\delta=1$, then $x\cdot y=\beta[x,y]$, and the
commutativity of $\cdot$ gives $\beta L_1=\beta[L_1,L_0]=\beta[L_0,L_1]=-\beta L_1$, so $\beta=0$.
\end{proof}

\begin{remark}
The identity \eqref{eq:tdp} is sometimes normalized as
$\delta\, z\cdot[x,y]=[z\cdot x,y]+[x,z\cdot y]$. For $\delta\neq0$ this is \eqref{eq:tdp} with
$\delta$ replaced by $\delta^{-1}$, and for $\delta=0$ it means that every multiplication
operator $D=z\cdot(-)$ is a quasi-derivation with $D'=0$; by Theorem \ref{thm:esvmain},
$D=\ad_x+\lambda\Id$ with $0=D'=\ad_x+2\lambda\Id$, whence $x=0$ and $\lambda=0$ by the
uniqueness part of Theorem \ref{thm:esvmain}. Hence Corollary \ref{cor:esvtp} also holds in this
normalization, for every $\delta\in\C$.
\end{remark}

\end{document}